\documentclass[12pt]{amsart}

%\usepackage{color}
%Packages Used%
\usepackage{amstext}
\usepackage{amsthm}
\usepackage{amsmath}
\usepackage{amssymb}
\usepackage{latexsym}
\usepackage{amsfonts}
\usepackage{graphicx}
\usepackage{texdraw}
\usepackage{graphpap}

\usepackage[pagebackref,hypertexnames=false, colorlinks, citecolor=red, linkcolor=red]{hyperref} %,hypertexnames=false,colorlinks,[pagebackref]
\usepackage[backrefs]{amsrefs}

\input txdtools

\bibliographystyle{plain}

%Page Setup%
\setlength{\evensidemargin}{0in}
\setlength{\oddsidemargin}{0in}
\setlength{\topmargin}{-.5in}
\setlength{\textheight}{9in}
\setlength{\textwidth}{6.5in}

\begin{document}
%Commands Used%

% Title of the paper
% \title[short title for the running head]{full title}

\title[Thin Sequences, $H^p$ Theory, Model Spaces, and Uniform Algebras]{Thin Sequences and Their Role in $H^p$ Theory, Model Spaces, and Uniform Algebras}

% Authors of the paper
% \author[short names]{full names}
% Example:\author[C. Fefferman and F. Ricci]{Charles Fefferman and Fulvio Ricci}

\author[P. Gorkin]{Pamela Gorkin$^\dagger$}
\address{Pamela Gorkin, Department of Mathematics\\ Bucknell University\\  Lewisburg, PA  USA 17837}
\email{pgorkin@bucknell.edu}
\thanks{$\dagger$ To Lund University, Mathematics in the Faculty of Science for financial support and hospitality provided during the summer of 2012}

\author[S. Pott]{Sandra Pott}
\address{Sandra Pott\\ Faculty of Science\\
Centre for Mathematical Sciences\\
Lund University\\
22100 Lund, Sweden}
\email{sandra@maths.lth.se}

\author[B.D.Wick]{Brett D. Wick$^\ddagger$}
\address{Brett D. Wick, School of Mathematics\\ Georgia Institute of Technology\\ 686 Cherry Street\\ Atlanta, GA USA 30332-0160}
\email{wick@math.gatech.edu}
\thanks{$\ddagger$ Research supported in part by a National Science Foundation DMS grants \# 1001098 and \# 0955432.}

% Please add one \address for each author

% The 2010 AMS Classification
% \amsclassification[Secondary]{Primary}

% Keywords

\keywords{Hardy space, thin sequences, interpolation, asymptotic orthonormal sequence}

%%%%%%%%%%%%%%%%%%%%%%%%%%%%%%%%%
%
%  Author packages and commands.
%  Include here any extra LaTeX packages you may use
%  as well as your personal symbol definitions and macros.
%  Please, do not include any command that may alter the
%  layout of the page (width, length, margins, font size, etc.)
%
%%%%%%%%%%%%%%%%%%%%%%%%%%%%%%%%%

\newcommand{\ci}[1]{_{ {}_{\scriptstyle #1}}}

\newcommand{\norm}[1]{\ensuremath{\left\|#1\right\|}}
\newcommand{\abs}[1]{\ensuremath{\left\vert#1\right\vert}}
\newcommand{\p}{\ensuremath{\partial}}
\newcommand{\pr}{\mathcal{P}}

\newcommand{\pbar}{\ensuremath{\bar{\partial}}}
\newcommand{\db}{\overline\partial}
\newcommand{\D}{\mathbb{D}}
\newcommand{\B}{\mathbb{B}}
\newcommand{\Sp}{\mathbb{S}}
\newcommand{\T}{\mathbb{T}}
\newcommand{\R}{\mathbb{R}}
\newcommand{\Z}{\mathbb{Z}}
\newcommand{\C}{\mathbb{C}}
\newcommand{\N}{\mathbb{N}}
\newcommand{\Hc}{\mathcal{H}}
\newcommand{\scrL}{\mathcal{L}}
\newcommand{\td}{\widetilde\Delta}

\newcommand{\CC}{\mathcal{C}}
\newcommand{\EE}{\mathcal{E}}
\newcommand{\RR}{\mathcal{R}}
\newcommand{\TT}{\mathcal{T}}

\newcommand{\La}{\langle }
\newcommand{\Ra}{\rangle }
\newcommand{\rk}{\operatorname{rk}}
\newcommand{\card}{\operatorname{card}}
\newcommand{\ran}{\operatorname{Ran}}
\newcommand{\osc}{\operatorname{OSC}}
\newcommand{\im}{\operatorname{Im}}
\newcommand{\re}{\operatorname{Re}}
\newcommand{\tr}{\operatorname{tr}}
\newcommand{\vf}{\varphi}

\renewcommand{\qedsymbol}{$\Box$}
\newtheorem{thm}{Theorem}[section]
\newtheorem{lm}[thm]{Lemma}
\newtheorem{cor}[thm]{Corollary}
\newtheorem{conj}[thm]{Conjecture}
\newtheorem{prob}[thm]{Problem}
\newtheorem{prop}[thm]{Proposition}
\newtheorem*{prop*}{Proposition}
\newtheorem{defin}[thm]{Definition}
\theoremstyle{remark}
\newtheorem{rem}[thm]{Remark}
\newtheorem*{rem*}{Remark}

%%%%%%%%%%%%%%%%%%%%%%%%%%%%%%%%%

%%%%%%%%%%%%%%%%%%%%%%%%%%%%%%%%%
%
%  Insert the abstract
%
%%%%%%%%%%%%%%%%%%%%%%%%%%%%%%%%%

\begin{abstract}
In this paper we revisit some facts about thin interpolating sequences in the unit disc from three perspectives: uniform algebras, model spaces, and $H^p$ spaces. We extend the notion of asymptotic interpolation to $H^p$ spaces, for $1 \le p \le \infty$, providing several new ways to think about these sequences.
\end{abstract}

%%%%%%%%%%%%%%%%%%%%%%%%%%%%%%%%%
%
% Body of the article
% Please insert here the TeX source of your paper
% (except the bibliography)
%
%%%%%%%%%%%%%%%%%%%%%%%%%%%%%%%%%

\maketitle

\section{Introduction and Motivation}

Let $\{z_j\}$ be a sequence of points in $\mathbb{D}$. We say that $Z:=\{z_j\}_{j=1}^\infty$ is an \textnormal{interpolating sequence} for $H^\infty$, the space of bounded analytic functions, if for every $w\in\ell^\infty$ there is a function $f\in H^\infty$ that solves the interpolation problem $$f(z_j) = w_j, ~\mbox{where}~ j\in\N.$$ Carleson's interpolation theorem says that $\{z_j\}_{j=1}^\infty$ is an interpolating sequence for $H^\infty$ if and only if 
\begin{equation}
\label{Interp_Cond}
\delta = \inf_{j}\delta_j:=\inf_j \left\vert B_j(z_j)\right\vert=\inf_{j}\prod_{k \ne j} \left|\frac{z_j - z_k}{1 - \overline{z_j} z_k}\right| > 0.
\end{equation}
Here we are letting
$$
B_j(z)=\prod_{k\neq j}\frac{-\overline{z_k}}{\abs{z_k}}\frac{z-z_k}{1-\overline{z_k}z}%=\prod_{k\neq j}b_{z_k}(z)
$$
denote the infinite Blaschke factor that vanishes on the set of points $Z\setminus\{z_j\}=\{z_k:k\neq j\}$.
It is known, see page 285 in \cite{Garnett}, that given an interpolating sequence, if we let 
\begin{equation}
M = \sup_{\|a\|_{\ell^\infty} \le 1} \inf\left\{\|f\|_\infty: f \in H^\infty, f(z_j) = a_j, j\in\N\right\}
\end{equation}
denote the constant of interpolation, then there are functions $f_j \in H^\infty$ such that 
\begin{equation}   \label{eq:beurprop}
f_j(z_j) = 1, f_j(z_k) = 0~\mbox{for}~k \ne j~\mbox{and}~\sup_{z\in\mathbb{D}}\sum_j |f_j(z)| \le M.
\end{equation}
The $f_j$ are often referred to as P. Beurling functions (see \cite{J1}), and they are not explicitly defined. However, Peter Jones \cite{J} found a simple formula for
functions in $H^\infty$ with properties close to those of P. Beurling functions, in the sense that the inequality (\ref{eq:beurprop}) is satisfied with a constant $M' \ge M$.

As a result, Jones obtained a simpler proof of Carleson's interpolation theorem. Interpolating sequences were studied by Shapiro and Shields \cite{SS} who showed that for $1 \le p \le \infty$, Carleson's condition is also a necessary and sufficient condition for interpolation in $H^p$ in the following sense: for each $\{a_j\} \in \ell^p$ there exists $f \in H^p$ with $f(z_j)(1 - |z_j|^2)^{1/p} = a_j$ for all $j$.

%We begin by mentioning that, as noticed by Jones \cite[p. 150]{J}, the Jones construction provides a different proof of the following theorem of Shapiro and Shields:

%\begin{thm}
%Let $1\leq p\leq \infty$.  Suppose that the sequence $Z$ is strongly separated, i.e.,
%$$
%\inf_{j}\prod_{j\neq k}\left|\frac{z_j-z_k}{1-\overline{z_k}z_j}\right|\geq\delta>0.
%$$
%Then $Z$ is $H^p(\D)$ interpolating, and we have norm control on the solution function.
%\end{thm}

In this paper, we will be interested in sequences $Z$ that satisfy a stronger condition than \eqref{Interp_Cond}.  Recall that a sequence $Z=\{z_j\}\subset\D$ is \textnormal{thin} if 
$$
\lim_{j\to\infty}\delta_j:=\lim_{j\to\infty}\prod_{k\neq j}\left\vert\frac{z_j-z_k}{1-\overline{z_k}z_j}\right\vert=1.
$$

Thin sequences have played an important role in function theory on the unit disc, and as motivation for many of our main results we highlight some of the interesting connections thin sequences have to function theory and functional analysis.

First, we recall that thin sequences are connected to certain functional analytic basis properties of normalized reproducing kernels.  Let $\Hc$ be a complex Hilbert space. Recall that a sequence $\{x_n\}$ in $\Hc$ is said to be {\it complete} if $\mbox{span} \{x_n: n \ge 1\} = \Hc$,  {\it minimal} if for every $n \ge 1$, it is the case that $x_n \notin ~\mbox{span}\{x_m: m \ne n\}$ and {\it Riesz} if there are constants $C_1$ and $C_2$, positive, such that for all complex sequences $\{a_n\}$ we have
$$C_1 \sum_{n \ge 1} |a_n|^2 \le \left\|\sum_{n \ge 1} a_n x_n\right\|^2_{\Hc} \le C_2 \sum_{n \ge 1} |a_n|^2.$$ Recall that every Riesz sequence is minimal, but the converse is not necessarily true. Finally, the Gram matrix corresponding to $\{x_j\}$ is the matrix $G = \left(\langle x_n, x_m \rangle\right)_{n, m \ge 1}$.  

In \cite{CFT}, asymptotically orthonormal sequences (AOS) and asymptotically orthonormal basic sequences (AOB) were studied; that is, a sequence $\{x_n\}$ is an AOS in $\Hc$ if there exists $N_0 \in \N$ such that for all $N \ge N_0$ there exist positive constants $c_N$ and $C_N$ with

\begin{eqnarray}
\label{thininequality} 
c_N \sum_{n \ge N} |a_n|^2 \le \left\|\sum_{n \ge N} a_n x_n\right\|^2_{\Hc} \le C_N \sum_{n \ge N} |a_n|^2,
\end{eqnarray}
and $c_N \to 1, C_N \to 1$ as $N \to \infty$. If we can take $N_0 = 1$, the sequence is said to be an AOB; this is equivalent to being AOS and a Riesz sequence.  In Proposition 3.2 of \cite{CFT}, Chalendar, Fricain and Timotin note that work of Volberg, Theorem 3 in \cite{V}, implies the following.

\begin{prop}\label{propCFT} Let $\{x_n\}$ be a sequence in $\Hc$. The following are equivalent:
\begin{enumerate}
\item $\{x_n\}$ is an AOB;
\item There exist a separable Hilbert space $\mathcal{K}$, an orthonormal basis $\{e_n\}$ for $\mathcal{K}$ and $U, K: \mathcal{K} \to \mathcal{H}$, $U$ unitary, $K$ compact, $U + K$ left invertible, such that 
$$(U + K)(e_n) = x_n;$$
\item The Gram matrix $G$ associated to $\{x_n\}$ defines a bounded operator of the form $I + K$ with $K$ compact.
\end{enumerate} 
\end{prop}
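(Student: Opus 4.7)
The fundamental identity tying the three conditions together is $\langle Ga,a\rangle_{\ell^2}=\|\sum_n a_n x_n\|_{\mathcal H}^2$, which converts the AOB norm estimates into spectral statements about $G$. I will establish $(1)\Leftrightarrow(3)$ directly from this identity, and then close the loop by proving $(1)\Leftrightarrow(2)$ via the polar decomposition of the analysis operator $T:\ell^2\to\mathcal H$ given by $Te_n=x_n$. Throughout I replace $\mathcal H$ by $\overline{\mathrm{span}}\{x_n\}$, still separable; this reduction is necessary if the $U$ appearing in (2) is to be genuinely unitary.

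\textbf{$(1)\Leftrightarrow(3)$.} Let $P_N$ be the orthogonal projection of $\ell^2$ onto $\overline{\mathrm{span}}\{e_n:n\ge N\}$. The AOB inequalities \eqref{thininequality} with $c_N,C_N\to 1$ are equivalent to $\|P_N(G-I)P_N\|\to 0$, since $G-I$ is self-adjoint. For $(1)\Rightarrow(3)$, decompose
$$G-I \;=\; (I-P_N)(G-I) \;+\; P_N(G-I)(I-P_N) \;+\; P_N(G-I)P_N;$$
the first two summands are finite rank (factoring through the $(N-1)$-dimensional range of $I-P_N$), while the third has norm tending to $0$, so $G-I$ is a norm limit of finite-rank operators and hence compact. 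Conversely, if $K:=G-I$ is compact then $\|KP_N\|\to 0$ (because $P_N\to 0$ strongly), which immediately returns \eqref{thininequality}; the Riesz part of AOB---namely that $G$ is invertible, equivalently $-1\notin\sigma(K)$---is embedded in the hypothesis that the Gram matrix of $\{x_n\}$ is in fact a well-defined bounded operator of this form.

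\textbf{$(1)\Leftrightarrow(2)$ and main obstacle.} Assume (1). With $\mathcal H=\overline{\mathrm{span}}\{x_n\}$, the operator $T:\ell^2\to\mathcal H$, $Te_n=x_n$, is bounded (upper Riesz bound), bounded below (lower Riesz bound $c_1>0$), and has dense range. Polar-decompose $T=V|T|$ with $|T|=G^{1/2}$: invertibility of $|T|$ makes $V=T|T|^{-1}$ an isometry on all of $\ell^2$, and $\ran V=\ran T=\mathcal H$ promotes $V$ to a unitary. By (3) $G-I$ is compact, and the resolvent identity $G^{1/2}-I=(G-I)(G^{1/2}+I)^{-1}$---valid because $G^{1/2}+I\ge(1+c_1^{1/2})I$ is invertible---shows $|T|-I=:K_1$ is compact. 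Writing $T=V+VK_1$ gives (2) with $U:=V$ and $K:=VK_1$; left invertibility is the lower Riesz bound. For $(2)\Rightarrow(1)$, expand
$$\|(U+K)v\|^2 \;=\; \|v\|^2+2\re\langle Uv,Kv\rangle+\|Kv\|^2,$$
and use $\|KP_N\|\to 0$ (compactness of $K$) to bound the last two terms by $o(\|v\|^2)$ uniformly for $v\in\ran P_N$, yielding \eqref{thininequality}, while left invertibility of $U+K$ supplies the Riesz lower bound. The main obstacle is this polar-decomposition step: obtaining a \emph{unitary} $U$---not merely an isometry---forces $\overline{\mathrm{span}}\{x_n\}=\mathcal H$, since a Fredholm-index count precludes $T=U+K$ with $U$ unitary and $K$ compact whenever $\ran T$ is a proper closed subspace; and verifying that $|T|-I$ is compact (rather than just small in norm on the tail) relies decisively on the Riesz lower bound via the resolvent identity above.
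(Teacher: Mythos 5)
The paper does not prove this proposition; it is quoted from Chalendar--Fricain--Timotin (Proposition 3.2 of \cite{CFT}), who in turn attribute it to Volberg. So there is no in-paper proof to compare against, and I assess your argument on its own terms. Your overall route --- reading $(1)\Leftrightarrow(3)$ off the identity $\langle Ga,a\rangle=\|\sum a_nx_n\|^2$ and $(1)\Leftrightarrow(2)$ off the polar decomposition of the synthesis operator --- is the natural one, and most of the steps are sound: the three-piece decomposition of $G-I$ into two finite-rank corners plus a small tail corner gives $(1)\Rightarrow(3)$; the factorization $G^{1/2}-I=(G-I)(G^{1/2}+I)^{-1}$ cleanly transports compactness to $|T|-I$; and your observation that (2), taken literally, forces $\overline{\mathrm{span}}\{x_n\}=\mathcal H$ via a Fredholm-index count correctly diagnoses an imprecision in the statement of (2).

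There is, however, a genuine gap in $(3)\Rightarrow(1)$. Compactness of $K=G-I$ gives $\|KP_N\|\to0$ and hence $c_N,C_N\to1$ with $c_N>0$ for all \emph{large} $N$; that is the AOS property. AOB additionally demands $c_1>0$, i.e.\ that $\{x_n\}$ is a Riesz sequence, equivalently that $G$ is invertible, and your claim that invertibility is ``embedded in the hypothesis that the Gram matrix is a well-defined bounded operator of this form'' is simply false. Take $\{e_n\}$ an orthonormal basis, $x_1=x_2=e_1$, and $x_n=e_{n-1}$ for $n\ge3$: then $G$ is bounded, $G-I$ has rank two (hence compact), yet $G(e_1-e_2)=0$, so $\{x_n\}$ has no Riesz lower bound and is not an AOB. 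Thus (3) as written does not imply (1), and your proof cannot close this. The repair is either to add invertibility of $G$ (equivalently $-1\notin\sigma(K)$) to condition (3), which is how the result should be stated, or to invoke a minimality/$\omega$-independence hypothesis on $\{x_n\}$ --- since $Gv=0$ forces $\sum v_nx_n=0$, a nontrivial $\ell^2$-dependence, contradicting minimality, and then injectivity of $G=I+K$ plus Fredholm index zero gives invertibility --- but one of these must be stated and argued rather than asserted to be automatic.
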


We now make the connection to thin sequences explicit.  Given an inner function $\Theta$, we define the corresponding model space to be $K_\Theta = H^2 \ominus \Theta H^2$ and the orthogonal projection will be denoted by $P_\Theta$. The reproducing kernel in $K_\Theta$ for $z_0 \in \mathbb{D}$ is 
$$
k_{z_0}^\Theta(z) = \frac{1 - \overline{\Theta(z_0)}{\Theta(z)}}{1 - \overline{z_0}z}
$$ 
and the normalized reproducing kernel is 
$$
h_\lambda^\Theta(z) = \sqrt{\frac{1 - |z_0|^2}{1 - |\Theta(z_0)|^2}} k_{z_0}^\Theta(z).
$$
Finally, note that $$k_{z_0} = k_{z_0}^\Theta + \Theta \overline{\Theta(z_0)}k_{z_0}.$$

It is well known that if $\{z_n\}$ is a Blaschke sequence with simple zeros with corresponding Blaschke product $B$, then $h_{z_n}=\frac{(1-\left\vert z_n\right\vert^2)^{\frac{1}{2}}}{(1-\overline{z_n}z)}$ is a complete minimal system in $K_B$ and we also know that $\{z_n\}$ is interpolating if and only if $\{h_{z_n}\}$ is a Riesz basis. 

The following beautiful theorem provides the connection back to thin sequences.
\begin{thm}[Volberg, Theorem 2 in \cite{V}] 
\label{Volberg} 
The following are equivalent:
\begin{enumerate}
\item$\{z_n\}$ is a thin interpolating sequence;
\item
The sequence $\{h_{z_n}\}$ is a complete $AOB$ in $K_B$;
\item
There exist a separable Hilbert space $\mathcal{K}$, an orthonormal basis $\{e_n\}$ for $\mathcal{K}$ and $U, K: \mathcal{K} \to K_B$, $U$ unitary, $K$ compact, $U + K$ invertible, such that 
$$(U + K)(e_n) = h_{z_n} \text{ for all } n \in \N.$$
\end{enumerate}
 \end{thm}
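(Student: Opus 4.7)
The plan is to prove the three equivalences by combining Proposition \ref{propCFT} with direct Gram matrix computations in the easy direction, and with a refinement of Carleson's interpolation theorem for thin sequences in the hard direction. Throughout I will use the fact already noted before the theorem that $\{h_{z_n}\}$ is automatically a complete minimal system in $K_B$ for any Blaschke sequence with simple zeros, so only the asymptotic orthonormality has to be addressed. In particular, the equivalence (2) $\Leftrightarrow$ (3) falls straight out of Proposition \ref{propCFT}: completeness of the range upgrades the left invertibility of $U+K$ in item (2) of that proposition to full invertibility in item (3) of the theorem, and conversely invertibility forces the range to be all of $K_B$.

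For (2) $\Rightarrow$ (1), I would compute the Gram matrix entries directly using the $H^2$ reproducing property:
\[
G_{nm} = \langle h_{z_n}, h_{z_m}\rangle = \frac{\sqrt{(1-|z_n|^2)(1-|z_m|^2)}}{1-\overline{z_n}z_m},
\]
whence $|G_{nm}|^2 = 1 - |\rho(z_n,z_m)|^2$, where $\rho$ denotes the pseudohyperbolic distance. By Proposition \ref{propCFT}, the AOB hypothesis gives $G = I+K$ with $K$ compact, so the columns of $K$ tend to $0$ in $\ell^2$, i.e.\ $\sum_{m \neq n}(1-|\rho(z_n,z_m)|^2) \to 0$. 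Since this row sum is eventually less than $1/2$, each individual term is also less than $1/2$, and the inequality $-\log(1-a) \le 2a$ on $[0,1/2]$ yields $-\log\prod_{m \neq n}|\rho(z_n,z_m)|^2 \le 2\sum_{m \neq n}(1-|\rho(z_n,z_m)|^2) \to 0$, i.e.\ the sequence is thin.

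The substantive direction is (1) $\Rightarrow$ (2). Here the plan is to show the tail Gram operator converges to the identity in operator norm: with $P_N$ the projection onto the first $N$ coordinates, the three blocks $P_N K$, $P_N^\perp K P_N$, $P_N K P_N^\perp$ of $K = G - I$ have finite rank, while I expect $\|P_N^\perp K P_N^\perp\|_{\mathrm{op}} \to 0$; this exhibits $K$ as a norm limit of finite-rank operators, hence compact. To obtain the tail estimate I would invoke the classical refinement of Carleson's theorem for thin sequences: for every $\varepsilon > 0$ there exists $N_\varepsilon$ such that $\{z_n\}_{n > N_\varepsilon}$ is $(1+\varepsilon)$-interpolating in $H^\infty$. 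The input is that the tail Carleson constant $\inf_{n > N}\delta_n$ tends to $1$ by the thin hypothesis; substituting this into Jones' explicit interpolating functions and tracking the $\delta$-dependence of their $H^\infty$ norms yields an interpolation constant $M_N \to 1$. The Shapiro--Shields identification at $p = 2$ then converts $M_N \to 1$ into Riesz bounds $c_N, C_N \to 1$ for $\{h_{z_n}\}_{n > N}$ in $K_{B_N}$, and since these are the extremal spectral values of the tail Gram matrix this delivers the required norm convergence.

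The main obstacle is this last quantitative step: showing that thin actually forces the tail interpolation constants to tend to $1$, rather than merely remain bounded. Jones' formula produces an $H^\infty$ interpolant whose norm involves both the separation constant $\delta$ and logarithmic factors in $1/\delta$; these are harmless as $\delta \to 1$, but one must track the implicit constants carefully to confirm that the bound itself converges to $1$. A fallback plan, should that calculation become cumbersome, is to construct Peter Beurling functions for the tail Blaschke product $B_N$ directly and estimate their $H^2$ norms via duality, using that $\|h_{z_n}\| = 1$ while $\langle h_{z_n}, h_{z_m}\rangle \to 0$ for $n,m \to \infty$ with $n \neq m$ under the thin condition.
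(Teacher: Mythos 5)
The paper does not prove this theorem; it is stated and attributed to Volberg \cite{V}, so there is no internal proof to compare against, only the correctness of your blind attempt. Your handling of (2) $\Leftrightarrow$ (3) via Proposition~\ref{propCFT} is correct: a left-invertible operator has closed range equal to the closed span of the images of the orthonormal basis, so completeness of $\{h_{z_n}\}$ forces surjectivity (hence invertibility), and conversely invertibility forces completeness. Your (2) $\Rightarrow$ (1) is also correct: the identity $|G_{nm}|^2 = 1-\rho(z_n,z_m)^2$, compactness of $K=G-I$ forcing $\|Ke_n\|_{\ell^2}\to 0$, and the inequality $-\log(1-a)\le 2a$ on $[0,1/2]$ together give $\delta_n\to 1$.

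The gap is in (1) $\Rightarrow$ (2). Your plan is to show that the tail interpolation constants $M_N$ tend to $1$ by ``substituting into Jones' explicit interpolating functions and tracking the $\delta$-dependence.'' This does not work: the constant $C(\delta)$ from Lemma~\ref{estimates} carries absolute multiplicative factors that do not vanish as $\delta\to 1$ (coming from the real part of the exponential correction and from the diagonal $m=j$ term), so the Jones construction gives $C(\delta)\to C_0>1$, not $C(\delta)\to 1$. The paper says exactly this immediately after Lemma~\ref{estimates}: the Jones interpolant ``is not of minimal norm,'' and that is precisely why Theorem~\ref{thm:replacement} is proved via the Commutant Lifting Theorem rather than via Jones --- but Theorem~\ref{thm:replacement} itself invokes the very Volberg theorem you are trying to prove, so it is unavailable here. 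Your acknowledged ``obstacle'' is therefore genuine, and your fallback via duality estimates on $H^2$ norms of Beurling functions is too vague to close it. A second, smaller gap: even granting $M_N\to 1$ in $H^\infty$, the passage to Riesz constants $c_N, C_N\to 1$ needs an argument; the upper bound $C_N$ equals the Carleson embedding constant, which is not directly controlled by the interpolation constant without the reproducing kernel thesis or a Schur-type estimate. The clean route that avoids all of this is already in the paper: thin implies
$$\sum_{m\ne n}\bigl(1-\rho(z_n,z_m)^2\bigr)\;\le\;-\log\delta_n^{\,2}\;\longrightarrow\;0$$
(using $\prod a_m \le \exp(-\sum(1-a_m))$), which is precisely $(G-I)e_n\to 0$ in $\ell^2$, and Proposition~\ref{prop2CFT} upgrades that directly to the complete AOB property. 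You cite Proposition~\ref{propCFT} but never invoke Proposition~\ref{prop2CFT}, which is the tool designed exactly for the direction you find hard.
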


We also have the following very useful equivalent conditions.
\begin{prop}[Chalendar, Fricain, Timotin, Proposition 4.1 in \cite{CFT}]
\label{prop2CFT} 
If $\{z_n\}$ is a Blaschke sequence of distinct points in $\mathbb{D}$, then the following are equivalent:
\begin{enumerate}
\item $\{h_{z_n}\}$ is a complete $AOB$ in $K_B$;
\item $(G - I)e_n \to 0$.
\end{enumerate}
\end{prop}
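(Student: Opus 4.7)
I would prove the two implications separately, using Proposition~\ref{propCFT} for the easy direction $(1)\Rightarrow(2)$ and Volberg's Theorem~\ref{Volberg} (after first extracting thinness from the hypothesis) for $(2)\Rightarrow(1)$.

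For $(1)\Rightarrow(2)$: since $\{h_{z_n}\}$ is a complete AOB, Proposition~\ref{propCFT} produces a compact operator $K$ with $G=I+K$. The standard orthonormal basis $\{e_n\}$ converges weakly to $0$ in $\ell^2$, and compact operators send weakly null sequences to norm null sequences, so $(G-I)e_n = Ke_n \to 0$ in $\ell^2$.

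For $(2)\Rightarrow(1)$: I would compute the Gram matrix entries explicitly,
\[
G_{nm} = \langle h_{z_n}, h_{z_m}\rangle = \frac{(1-|z_n|^2)^{1/2}(1-|z_m|^2)^{1/2}}{1-\overline{z_m}z_n}, \qquad |G_{nm}|^2 = 1 - \left|\frac{z_n-z_m}{1-\overline{z_m}z_n}\right|^2,
\]
so the hypothesis $\|(G-I)e_n\|_{\ell^2}^2 = \sum_{m\neq n}|G_{nm}|^2 \to 0$ translates into
\[
\sum_{m\neq n}\left(1 - \left|\frac{z_n-z_m}{1-\overline{z_m}z_n}\right|^2\right) \to 0 \qquad (n\to\infty).
\]
For $n$ large every summand is small, so the elementary estimate $-\log(1-t)\le 2t$ for $t\in[0,\tfrac12]$ lets me pass to logarithms and conclude $\prod_{m\neq n}\left|(z_n-z_m)/(1-\overline{z_m}z_n)\right|\to 1$, i.e.\ $\delta_n\to 1$. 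Since $\{z_n\}$ is a Blaschke sequence of distinct points in $\mathbb{D}$, no $z_n$ can be an accumulation point of the sequence, so every $\delta_n$ is strictly positive; thinness then upgrades to $\inf_n\delta_n>0$, making the sequence interpolating. Theorem~\ref{Volberg} now applies and yields that $\{h_{z_n}\}$ is a complete AOB in $K_B$. The only non-routine step is this sum-to-product passage, which requires the uniform smallness of the summands for large $n$ to justify the log estimate; once thinness is in hand the conclusion is a direct appeal to Volberg's theorem.
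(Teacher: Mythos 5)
The paper states Proposition~\ref{prop2CFT} as a citation to Chalendar--Fricain--Timotin and does not reprove it, so there is no internal proof to compare against. Evaluating your argument on its own merits: it is correct, and it is essentially the proof one would expect.

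For $(1)\Rightarrow(2)$ the appeal to Proposition~\ref{propCFT} to write $G=I+K$ with $K$ compact, together with the weak-to-norm property of compact operators applied to $e_n\rightharpoonup 0$, is exactly right.

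For $(2)\Rightarrow(1)$ your reduction is sound: $\|(G-I)e_n\|_{\ell^2}^2=\sum_{m\neq n}|G_{nm}|^2=\sum_{m\neq n}\bigl(1-\rho(z_n,z_m)^2\bigr)$, and the hypothesis forces $\delta_n\to 1$. Two small points worth noting. First, the log estimate you use does require observing that $\sup_{m\neq n}(1-\rho(z_n,z_m)^2)\to 0$ before the bound $-\log(1-t)\le 2t$ applies; you do say this, but the step is cleaner with Weierstrass' inequality $\prod_k(1-t_k)\ge 1-\sum_k t_k$ for $t_k\in[0,1]$, which gives $\delta_n^2\ge 1-\sum_{m\neq n}(1-\rho(z_n,z_m)^2)$ directly with no preliminary smallness argument. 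That is precisely the device the paper itself uses in the analogous implication $(3)\Rightarrow(1)$ of Theorem~\ref{thm:Carleson}, so adopting it here would make your proof match the paper's style. Second, before invoking Volberg you correctly observe that each $\delta_n>0$, so that $\delta_n\to 1$ upgrades to $\inf_n\delta_n>0$; the cleanest justification is simply that for a Blaschke sequence of distinct points the Blaschke product $B_n$ converges and does not vanish at $z_n$, which is what your remark about accumulation points is gesturing at. With that, the appeal to Theorem~\ref{Volberg} closes the loop.
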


Finally, as further motivation, we recall the connection between thin sequences and a special interpolation problem.  Recall that a sequence $Z=\{z_j\}\subset\D$ is \textnormal{asymptotically interpolating of type 1} if for any sequence $w\in \mbox{ball}(\ell^\infty)$ there exists a function $f\in H^\infty$ with $\norm{f}_\infty\leq 1$ such that
$$
\abs{f(z_n)-w_n}\to 0.
$$

The following theorem shows that thin sequences are precisely the sequences for which interpolation can be done in $QA=H^\infty\cap VMO$ and they are, therefore, eventually interpolating sequences for $H^\infty$.  Here $VMO$ is the space of functions on the unit circle with vanishing mean oscillation. 

\begin{thm}[Wolff, Theorem III.10 in \cite{WT}] The following conditions on a sequence $\{z_n\}$ of distinct
points in $\D$ are equivalent:
\begin{enumerate}
\item If $a \in\ell^\infty$, then there is $f\in QA$ with $f(z_n)=a_n$ for all $n$;
\item If $a\in\ell^\infty$ and $\varepsilon > 0$, then there is $f\in H^\infty$ with
$\| f\|_{\infty} <\limsup|a_n|+\varepsilon$ and $f(z_n)=a_n$ for all but finitely many $n$;
\item $\{z_n\}$ is a thin interpolating sequence.
\end{enumerate}
\end{thm}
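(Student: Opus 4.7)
I would prove the cycle $(1) \Rightarrow (2) \Rightarrow (3) \Rightarrow (1)$. The main substance is the step $(3) \Rightarrow (1)$---showing that thin sequences admit interpolation in $QA$, not merely in $H^\infty$.

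For $(3) \Rightarrow (1)$, the plan is to refine Peter Jones's explicit $H^\infty$-interpolation formula. Set
$$f(z) = \sum_n a_n\,\frac{(1 - |z_n|^2)^2}{(1 - \overline{z_n}z)^2}\,\frac{B_n(z)}{B_n(z_n)}\,\exp(g_n(z)),$$
where $B_n$ is the Blaschke product vanishing on $\{z_k : k \neq n\}$ and $g_n$ is an analytic correction built from a Herglotz integral against the Carleson measure $\mu = \sum_k(1-|z_k|^2)\delta_{z_k}$. For a general interpolating sequence, $\mu$ is Carleson and $f \in H^\infty$. The thinness hypothesis promotes $\mu$ to a \emph{vanishing} Carleson measure, which via standard $VMO$--Hankel theory forces each $g_n \in VMOA$. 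Combined with $|B_n(z_n)| = \delta_n \to 1$ and the Jones bound $\sum_n|f_n(z)| \le C$, the sum converges in $QA = H^\infty \cap VMO$ with $f(z_n) = a_n$. This upgrade from $H^\infty$ to $QA$ is the principal obstacle and is where thinness is genuinely needed.

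For $(1) \Rightarrow (2)$, given $a \in \ell^\infty$ with $M = \limsup|a_n|$ and $\varepsilon > 0$, pick $N$ so that $|a_n| \le M + \varepsilon/2$ for $n \ge N$ and apply $(1)$ to $\tilde a_n := a_n \chi_{\{n \ge N\}}$, producing $g \in QA$ with $g(z_n) = a_n$ for $n \ge N$. The norm bound $\|g\|_\infty < M + \varepsilon$ follows by combining an open-mapping bound on the $QA$-interpolation constant with the fact that $g \in VMO$ has boundary modulus essentially controlled by cluster values of $g$ along $\{z_n\}_{n \ge N}$; enlarging $N$ further if necessary yields the desired bound.

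For $(2) \Rightarrow (3)$, a uniform-boundedness argument upgrades $(2)$ to: for every $\varepsilon > 0$ there exists $N$ such that any $\ell^\infty$-datum supported in $\{n \ge N\}$ admits an interpolant $f \in H^\infty$ with $\|f\|_\infty \le (1+\varepsilon)\|a\|_\infty$. Taking $a_k = \delta_{k,n_0}$ for a fixed $n_0 \ge N$ and applying Beurling's lower bound $\|f\|_\infty \ge 1/\delta_{n_0, N}$---where $\delta_{n_0, N} := \prod_{k \ge N, k \neq n_0}\rho(z_{n_0}, z_k)$---yields $\delta_{n_0, N} \ge 1/(1+\varepsilon)$. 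Since the complementary finite product $\prod_{k < N, k \neq n_0}\rho(z_{n_0}, z_k) \to 1$ as $n_0 \to \infty$ (because $|z_{n_0}| \to 1$ for a Blaschke sequence), we conclude $\delta_n \to 1$, which is thinness.
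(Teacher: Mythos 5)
The paper does not prove this theorem; it is stated as background and attributed to Wolff's thesis, so there is no in-paper proof to compare against, and your proposal must stand on its own. Unfortunately, the central step $(3)\Rightarrow(1)$ rests on a false premise. The measure $\mu = \sum_k (1-|z_k|^2)\delta_{z_k}$ is \emph{not} a vanishing Carleson measure when $\{z_k\}$ is thin: for any separated sequence one has $\mu(S_{I_{z_n}})/|I_{z_n}| \ge c > 0$ uniformly in $n$, because the box $S_{I_{z_n}}$ already contains the atom at $z_n$ with mass comparable to $|I_{z_n}|$. Thinness is the strictly weaker statement that the Carleson embedding constant of the tail measures $\mu_N = \sum_{k\ge N}(1-|z_k|^2)\delta_{z_k}$ tends to $1$ (Theorem~\ref{thm:Carleson}), or equivalently that $\frac{1}{|I_{z_n}|}\sum_{k\ne n,\,z_k\in S(AI_{z_n})}(1-|z_k|)\to 0$ with the central atom \emph{removed} (Theorem~\ref{Cmeasure}); neither of these says $\mu(S_I)/|I|\to 0$. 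Without vanishing Carleson, the appeal to ``standard $VMO$--Hankel theory'' to place the Herglotz corrections $g_n$ in $VMOA$, and hence to conclude $f=\sum a_n f_n\in QA$, has no footing. This step needs a genuinely different idea (Wolff's own argument uses a block decomposition and $BMO$-norm estimates on the error, not a straight sum of Jones-type building blocks).

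The other two implications are also gapped. In $(1)\Rightarrow(2)$, the passage from ``there is $g\in QA$ with $g(z_n)=a_n$ for $n\ge N$'' to $\|g\|_\infty < M+\varepsilon$ is asserted but not shown; a $QA$ interpolant whose values along a thin subsequence are $\le M+\varepsilon/2$ can a priori have much larger sup norm, and vague invocations of open mapping plus ``cluster values controlling the boundary modulus'' do not substitute for a quantitative estimate. In $(2)\Rightarrow(3)$ the claimed ``uniform-boundedness upgrade'' to the existence of a single $N=N(\varepsilon)$ valid for all data supported in $\{n\ge N\}$ is exactly the $EIS_\infty$ statement, which the paper explicitly flags as a nontrivial observation (see the discussion following the theorem and Corollary~\ref{cor:eip}); no operator-theoretic uniform boundedness principle delivers it, because condition $(2)$ does not define a linear map whose family is pointwise bounded in any usable sense. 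The second half of your $(2)\Rightarrow(3)$ argument (the Kronecker-delta test datum together with the Blaschke factorization lower bound and the observation that the finite head product tends to $1$) is sound once the upgraded form is in hand, but the upgrade itself must be proven. A cleaner route that avoids the upgrade entirely is to argue by contradiction exactly as in Theorem~\ref{thm:asiinfty}: assume a subsequence with $\delta_{n_j}\le 1-\varepsilon_0$, pass to a thin sub-subsequence, apply $(2)$ to its indicator sequence, and factor out the complementary Blaschke product.
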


The second equivalent condition indicates that we can interpolate a sequence $\{a_j\}$ of norm $1$ with a function of norm at most $1 + \varepsilon$ if we ignore finitely many terms. The number of terms appears to depend on the sequence $\{a_j\}$. One of our key observations is that, in fact, the number of terms we can ignore depends only on $\varepsilon$ and not the sequence $\{a_j\}$, see Theorem \ref{main} below.

Asymptotic interpolating sequences (AIS), introduced in \cite{HKZ}, were originally a tool to study essential norms of composition operators. However, further study of asymptotic interpolating sequences \cite{GM} showed that they were closely related to thin sequences. In fact, a sequence $\{z_n\}$ is thin if and only if it is an asymptotic interpolating sequence of type $1$; thus, asymptotic interpolating sequences of type $1$ are those for which the norm is the best one can hope for. In this paper, we study thin sequences from various angles: We consider them from a uniform algebra perspective reminiscent of work of Wolff \cite{W}, an $H^p$ perspective in the spirit of Shapiro and Shields, and from the point of view of model spaces as in Volberg \cite{V} and later in Chalendar, Fricain, and Timotin \cite{CFT}.  We discuss these in more detail below.

\subsection{Main Results and Structure of the Paper}

In Section~\ref{P. Beurling} we use the Commutant Lifting Theorem and P. Beurling functions to provide new proofs of a characterization of thin sequences due to Dyakonov and Nicolau to obtain other characterizations of thin sequences.   In \cite{DN} Dyakonov and Nicolau studied interpolation by nonvanishing functions and used their result to provide a proof that every thin sequence is an asymptotic interpolating sequence of type $1$ that does not rely on deep maximal ideal space techniques. In Section~\ref{asiinfty}, we will present another proof of the converse, namely that an asymptotic interpolating sequence of type $1$ is thin, that does not depend on knowledge of support sets for points in the maximal ideal space. It does, however, depend on a result of Carleson and Garnett (see, for example, Theorem 4.1 in \cite{Garnett}) or, in its place, the fact that an asymptotically interpolating sequence is eventually an interpolating sequence for $H^\infty$, which can be found in Theorem 1.6 in \cite{GM} . 

Our next result involves the algebra $H^\infty + C$: Let $C$ denote the space of continuous functions on the unit circle and, thinking of $H^\infty$ as an algebra on the circle, we let $H^\infty + C = \{f + g: f \in H^\infty, g \in C\}$. Sarason \cite{Sarason1} showed that $H^\infty + C$ is a closed subalgebra of $L^\infty$. We let $$QA = \overline{H^\infty + C} \cap H^\infty,$$ where the bar denotes complex conjugation. We are interested in a result, due to Axler and the first author \cite{AG} and proved independently by Guillory, Izuchi and Sarason \cite{GIS}, that says that given an $H^\infty$ function $f$ that tends to zero on an interpolating sequence $\{z_n\}$, if $B$ is the corresponding interpolating Blaschke product, then $f \overline{B} \in H^\infty + C$. In this section, we also provide another proof of this multiplication using the Jones construction, \cite{J}. Finally, as a consequence, we show how a result of Wolff \cite{W} for multiplication by functions in $QA$ tending asymptotically to zero follows from this.

In Section~\ref{asip}, we extend the notion of asymptotic interpolation to $H^p$ spaces, $1 \le p \le \infty$, and provide several new equivalent definitions of asymptotic interpolation for $H^p$ for a sequence $Z = \{z_n\}$ and we show that all are equivalent to $Z$ being thin. This result, which is the main result of this section, is our Theorem~\ref{main}. %In Section~\ref{finalremarks}, we use work on so-called AOB and AOS sequences (\cite{CFT}, \cite{F},  and \cite{V}) to study thin sequences and their relationship to model spaces.

\subsection{Notation}
\label{preliminaries}
The following notation will be standard throughout the paper.  As usual, for $1 \le p < \infty$, $H^p$ denotes the Hardy space on the open unit disc $\mathbb{D}$ and $H^\infty$ denotes the algebra of bounded analytic functions on $\mathbb{D}$.  The norm of a function $f\in H^p$ will be denoted by $\left\| f\right\|_p$.  For $f \in H^\infty$ we let $f^\star$ denote the radial limit function of $f$. Identifying $f$ with $f^\star$ allows us to think of $H^\infty$ as a subalgebra of the algebra of essentially bounded measurable functions on the unit circle, $\mathbb{T}$. Letting $C$ denote the algebra of continuous functions on the unit circle, we let $H^\infty + C$ denote the subalgebra of $L^\infty$ (see \cite{Sarason1}) consisting of functions $g$ of the form $g = h + c$, where $h \in H^\infty$ and $c \in C$. Two more algebras will play an important role in this paper: the algebra $QC:= (H^\infty + C) \cap \overline{(H^\infty + C)}$, where the bar denotes complex conjugation, and the algebra $QA = QC \cap H^\infty$.

We will let $\ell^\infty$ denote the collection of sequences $\{a_k\}$ such that
$$
\norm{a}_{\ell^\infty}:=\sup_{n}\abs{a_n}<\infty,
$$ and $\ell^p$ the collection of sequences $\{a_k\}$ such that
$$\norm{a}_{\ell^p}:=\left(\sum_{n = 1}^\infty |a_n|^p\right)^{1/p}<\infty.$$
For an integer $N$, we let $\|a\|_{N, \ell^p} := \|\{a_j\}_{j \ge N}\|_{\ell^p}$.

Finally, we will require some basic information about the maximal ideal space of $H^\infty$. We recall everything we need here.  The space of nonzero multiplicative linear functionals on $H^\infty$ is called the maximal ideal space of $H^\infty$ and we denote it by $M(H^\infty)$. We note that by identifying a point of $\mathbb{D}$ with point evaluation, we may think of $\mathbb{D}$ as a subset of $M(H^\infty)$. For $H^\infty + C$ it is well known that  $M(H^\infty + C) = M(H^\infty) \setminus \mathbb{D}$. Carleson's Corona Theorem tells us that $\mathbb{D}$ is dense in $M(H^\infty)$. 
Moreover, $M(L^\infty)$ is naturally embedded into $M(H^\infty)$.

We will find it useful to consider a particular decomposition of $M(H^\infty + C)$, namely we will identify points in $M(H^\infty + C)$ that agree on all $QC$ functions to obtain the $QC$-level sets. 
 If $x \in M(L^\infty)$ we let 
$$
E_x = \{y \in M(L^\infty) : y(q) = x(q)~\mbox{for all}~ q \in QC\}
$$ 
denote the $QC$-level set corresponding to $x$. The Bishop decomposition, page 60 of  \cite{Gamelin}, says that a function $f \in L^\infty$ is in the algebra $QC$ if and only if $f|E_x$ is constant for all $QC$-level sets $E_x$.

\section{P. Beurling Functions}
\label{P. Beurling}

In this section, our goal is to provide simpler proofs of certain results on interpolation with thin sequences. We will use two constructions, both of which produce a sequence of functions $\{f_k\}$ corresponding to our sequence $\{z_k\}$ such that $f_k(z_k) = 1$, $f_k(z_j) = 0$ if $k \ne j$ and $\sum_k |f_k(z)| < M$ for all $z \in \mathbb{D}$. We call these functions, $f_k$, P. Beurling functions. Peter Jones \cite{J} has shown how to construct such functions corresponding to interpolating sequences in $H^\infty$. Since we will use a slight modification of the Jones construction, we state the results we will use below without proof. The details can be found in \cite{J}. We also provide a simpler proof that asymptotic interpolating sequences of type $1$ are thin sequences.

\begin{lm}\label{estimates}
Let $\{z_j\}$ be an interpolating sequence.  Let  $$
g_j(z):= \frac{B_j(z)}{B_j(z_j)}\left(\frac{1-\abs{z_j}^2}{1-\overline{z_j}z}\right)^2\exp\left(-\sum_{\abs{z_m}\geq\abs{z_j}}\left(\frac{1+\overline{z_m}z}{1-\overline{z_m}z}-\frac{1+\overline{z_m}z_j}{1-\overline{z_m}z_j}\right)(1-\abs{z_m}^2)\right).
$$ 
Then there exists a constant $C(\delta)$ depending only on the separation constant $\delta := \min\{\delta_j: j \ge 1\}$ such that $$
\abs{g_j(z)}\leq C(\delta) \left(\frac{1-\abs{z_j}^2}{\abs{1-\overline{z_j}z}}\right)^2\exp\left(-\sum_{\abs{z_m}\geq\abs{z_j}}\left(\frac{1-\abs{z_m}^2}{\abs{1-\overline{z_m}z}}\right)^2\right).
$$

\end{lm}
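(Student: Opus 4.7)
The plan is to estimate $|g_j(z)|$ factor by factor, absorbing the real parts of the phases that we do not need into a single constant $C(\delta)$. The three factors to handle are $B_j(z)/B_j(z_j)$, the reproducing-kernel-like term $(1-|z_j|^2)^2/(1-\overline{z_j}z)^2$, and the exponential.

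The first factor is immediate: since $|B_j(z)|\le 1$ on $\D$ and, by hypothesis, $|B_j(z_j)|=\delta_j\ge \delta$, one has $|B_j(z)/B_j(z_j)|\le 1/\delta$. The middle factor is already written as a modulus, so it contributes exactly $\left((1-|z_j|^2)/|1-\overline{z_j}z|\right)^2$, which is the desired form. For the exponential, I use $|\exp(w)|=\exp(\re w)$ and the identity
\[
\re\!\left(\frac{1+\overline{z_m}\zeta}{1-\overline{z_m}\zeta}\right)=\frac{1-|z_m|^2|\zeta|^2}{|1-\overline{z_m}\zeta|^2},\qquad \zeta\in\D.
\]
Evaluating at $\zeta=z$ and using the elementary inequality $1-|z_m|^2|z|^2\ge 1-|z_m|^2$ (since $|z|\le 1$), I obtain the lower bound
\[
\re\!\left(\frac{1+\overline{z_m}z}{1-\overline{z_m}z}\right)(1-|z_m|^2)\ \ge\ \left(\frac{1-|z_m|^2}{|1-\overline{z_m}z|}\right)^{\!2}.
\]
Summing over $|z_m|\ge |z_j|$ reproduces exactly the sum appearing on the right-hand side of the claim.

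It remains to show that the corresponding sum at $\zeta=z_j$ is bounded by a constant depending only on $\delta$. Here the key algebraic step is the splitting
\[
1-|z_m|^2|z_j|^2=(1-|z_j|^2)+|z_j|^2(1-|z_m|^2)\le (1-|z_j|^2)+(1-|z_m|^2),
\]
which gives
\[
\sum_{|z_m|\ge|z_j|}\frac{(1-|z_m|^2|z_j|^2)(1-|z_m|^2)}{|1-\overline{z_m}z_j|^2}\le \sum_{m\ne j}\frac{(1-|z_m|^2)(1-|z_j|^2)}{|1-\overline{z_m}z_j|^2}+\sum_{|z_m|\ge|z_j|}\frac{(1-|z_m|^2)^2}{|1-\overline{z_m}z_j|^2}.
\]
The first sum is the classical Carleson square sum for an interpolating sequence and is bounded by a constant depending only on $\delta$. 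For the second, the restriction $|z_m|\ge|z_j|$ yields $1-|z_m|^2\le 1-|z_j|^2$, so it is dominated by the first sum and hence also bounded by $C(\delta)$. Combining these estimates, the modulus of the exponential is at most $e^{C(\delta)}\exp\!\left(-\sum_{|z_m|\ge|z_j|}\bigl((1-|z_m|^2)/|1-\overline{z_m}z|\bigr)^2\right)$, and gathering the three factors with a redefined $C(\delta)$ yields the claim.

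The only step that requires care is invoking the Carleson bound $\sum_{m\ne j}(1-|z_m|^2)(1-|z_j|^2)/|1-\overline{z_m}z_j|^2\le C(\delta)$; everything else is an elementary manipulation of harmonic kernels. Since this estimate is a standard consequence of the interpolating hypothesis (see Garnett, Chapter VII), the argument goes through cleanly.
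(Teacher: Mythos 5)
Your proposal is correct, and it is essentially the standard verification from Jones's paper. Note that the paper does not actually supply a proof of Lemma~\ref{estimates}; the authors state it without proof and cite Jones \cite{J} for the details, so there is no in-paper argument to compare against. Your three-step decomposition --- bounding $|B_j(z)/B_j(z_j)|$ by $1/\delta$, taking the modulus of the kernel factor directly, and using $\re\bigl(\tfrac{1+w}{1-w}\bigr)=\tfrac{1-|w|^2}{|1-w|^2}$ together with the elementary inequality $1-|z_m|^2|z|^2\ge 1-|z_m|^2$ and the Carleson-type bound $\sum_{m\ne j}\tfrac{(1-|z_m|^2)(1-|z_j|^2)}{|1-\overline{z_m}z_j|^2}\le -2\log\delta$ --- is exactly the argument behind the Jones estimate, transferred from the half-plane to the disc. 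One small slip: in your displayed inequality bounding the $\zeta=z_j$ sum, the left side runs over all $m$ with $|z_m|\ge|z_j|$ (including $m=j$) while the first sum on the right excludes $m=j$; the $m=j$ term on the left equals $1+|z_j|^2\le 2$, so the term-by-term inequality fails there, but since this only contributes a bounded additive constant it does not affect the conclusion that the sum is $\le C(\delta)$.
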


%If we assume that $\{z_j\}$ is interpolating, we may choose $N = 1$ in the statement above. 

As in \cite{J}, this can be used to obtain a concrete description of a function that does the interpolation, as indicated below. However this function is not of minimal norm. To get the functions of minimal norm, we must do something different. For that, we turn to the commutant lifting theorem and P. Beurling functions. First, let us state the interpolation
result for $H^\infty$ by means of Jones' functions from Lemma \ref{estimates}.

\begin{thm}
\label{JonesInterp_asymp}
Suppose that the sequence $Z$ is a thin sequence, i.e., $\lim_{j\to\infty}\delta_j=1$, where $\delta_j:=\left\vert B_j(z_j)\right\vert$.  
Define 
$$
g_j(z):= \frac{B_j(z)}{B_j(z_j)}\left(\frac{1-\abs{z_j}^2}{1-\overline{z_j}z}\right)^2\exp\left(-\sum_{\abs{z_m}\geq\abs{z_j}}\left(\frac{1+\overline{z_m}z}{1-\overline{z_m}z}-\frac{1+\overline{z_m}z_j}{1-\overline{z_m}z_j}\right)(1-\abs{z_m}^2)\right),
$$
if $\delta_j \ne 0$ and take $g_j = 0$ otherwise.

Then there exists $N$ such that for any $a\in\ell^\infty$ 
$$
g(z)=\sum_{j=1}^\infty a_j g_j(z)\in H^\infty
~\mbox{with}~g(z_j) = a_j~\mbox{for}~j \ge N$$ and
$$
\sup_{z\in\D}\abs{g(z)}\leq C(\delta)\norm{a}_{\ell^\infty}.
$$
where $C(\delta)$ is a constant depending on $\delta:= \min\left\{\delta_j: j \ge N\right\}$.

\end{thm}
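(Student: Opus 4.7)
The plan is to verify the interpolation property directly from the explicit formula for $g_j$ and then to control $\sum_j \abs{g_j(z)}$ uniformly on $\D$ by means of the telescoping-exponential trick of P.\ Jones. Since $\{z_j\}$ is thin, one can choose $N \in \N$ so that $\delta := \min\{\delta_j : j \ge N\}$ is arbitrarily close to $1$; in particular the tail $\{z_j\}_{j \ge N}$ is an interpolating sequence with separation constant $\delta$. Applying Lemma \ref{estimates} to this tail yields, for every $j \ge N$,
$$
\abs{g_j(z)} \;\le\; C(\delta)\, b_j(z)\, \exp\!\left(-\sum_{\abs{z_m}\ge\abs{z_j}} b_m(z)\right),
$$
where $b_m(z) := \bigl((1-\abs{z_m}^2)/\abs{1-\overline{z_m}z}\bigr)^2$.

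Next I would verify the interpolation conditions. Because $B_j$ vanishes at every $z_k$ with $k \ne j$, one has $g_j(z_k) = 0$ for $k \ne j$; evaluating the formula at $z = z_j$ makes the rational factor equal to $B_j(z_j)/B_j(z_j)=1$ and makes every exponent in the exponential product vanish, so $g_j(z_j) = 1$. It follows that $g(z_j) = a_j$ for every $j \ge N$.

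For the norm estimate, reindex the points so that $\abs{z_1}\le\abs{z_2}\le\cdots$ and set $S_j(z) := \sum_{m \ge j} b_m(z)$, so $S_j(z) - S_{j+1}(z) = b_j(z)$. Since $t \mapsto e^{-t}$ is decreasing,
$$
b_j(z)\, e^{-S_j(z)} \;\le\; \int_{S_{j+1}(z)}^{S_j(z)} e^{-t}\,dt \;=\; e^{-S_{j+1}(z)} - e^{-S_j(z)},
$$
and summing telescopically gives $\sum_{j \ge N} b_j(z)\, e^{-S_j(z)} \le 1$ uniformly in $z \in \D$. Combined with the pointwise estimate from Lemma \ref{estimates} this shows $\sum_{j \ge N} \abs{g_j(z)} \le C(\delta)$, so that the partial sums of $g = \sum_j a_j g_j$ converge uniformly on $\D$ to an $H^\infty$ function with $\norm{g}_\infty \le C(\delta)\,\norm{a}_{\ell^\infty}$ (the finitely many initial terms $j<N$, if nonzero, contribute only a bounded $H^\infty$ function and are absorbed into the constant). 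The delicate point I expect to be the main technical obstacle is the bookkeeping of the separation constant: one must ensure the estimate in Lemma \ref{estimates} really yields a constant depending on $\min_{j\ge N}\delta_j$ rather than the possibly smaller global infimum $\inf_j \delta_j$, since it is precisely this improvement as $N\to\infty$ that will be exploited in subsequent asymptotic-interpolation arguments for thin sequences.
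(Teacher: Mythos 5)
The paper itself does not prove Theorem \ref{JonesInterp_asymp}; it refers the reader to Jones \cite{J} and to Lemma \ref{estimates}, and your proof correctly reconstructs exactly that standard argument: the interpolation conditions follow directly from the formula, and the uniform bound on $\sum_j\abs{g_j(z)}$ is obtained by the same telescoping comparison $b_j(z)e^{-S_j(z)}\le e^{-S_{j+1}(z)}-e^{-S_j(z)}$ used by Jones. Your closing remark about the bookkeeping of the separation constant is apt --- the theorem as stated sums from $j=1$, so the constant really controls only the tail $\sum_{j\ge N}$, and the paper glosses over this --- but your argument is the intended one and is correct.
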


These functions are extremely useful in interpolation theory, but because of the constant appearing above, they are not sharp in the sense of the following 
beautiful theorem of Per Beurling (see  e.g. page 285 in Garnett):

\begin{thm}\label{thm:pbeurling} Let $\{z_j\}$ be an interpolating sequence in the upper half plane.
Let $$M = \sup_{|a_j| \le 1} \inf\left\{\|f\|_\infty: f \in H^\infty, f(z_j) = a_j, j \in\N \right\}.$$ Then there are functions $f_j \in H^\infty$ such that $f_j(z_j) = 1, f_j(z_k) = 0~\mbox{for}~k \ne j$ and for each $z$,
$$\sum_j |f_j(z)| \le M.$$\end{thm}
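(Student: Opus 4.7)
The plan is to reduce to a sequence of finite interpolation problems and then extract the desired $f_j$ by a normal-families argument.

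\textbf{Finite approximation.} For each $N \in \mathbb{N}$, let $\pi_N\colon H^\infty \to \ell^\infty_N$ be the evaluation map $\pi_N(f) = (f(z_1),\dots,f(z_N))$ and let
\[
M_N = \sup_{\|a\|_\infty \le 1}\inf\{\|f\|_\infty : f \in H^\infty,\, \pi_N(f)=a\}
\]
denote the corresponding finite interpolation constant; plainly $M_N \le M$. The key step is to produce $f_1^{(N)},\dots,f_N^{(N)} \in H^\infty$ with $f_j^{(N)}(z_k) = \delta_{jk}$ and
\[
\sum_{j=1}^N |f_j^{(N)}(z)| \le M_N \quad\text{for every } z \in \mathbb{D}.
\]
This is equivalent to producing a linear right-inverse $L_N\colon \ell^\infty_N \to H^\infty$ of $\pi_N$ with $\|L_N\| \le M_N$, since upon setting $f_j^{(N)} = L_N(e_j)$ one has $\sum_j |f_j^{(N)}(z)| = \sup_{\|a\|_\infty\le 1}|L_N(a)(z)| \le \|L_N\|$. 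A norm-controlled linear section of this kind can be extracted from the Sz.-Nagy--Foias commutant lifting theorem applied to the compressed shift on the $N$-dimensional model space $K_{B_N}$ with $B_N = \prod_{j=1}^N \frac{z-z_j}{1-\bar z_j z}$: interpolation data correspond bijectively to operators in the commutant of this compression, and the lifting theorem produces a norm-preserving analytic extension that depends linearly on the data.

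\textbf{Compactness.} Since $|f_j^{(N)}(z)| \le M_N \le M$ for every $z$ and $j$, Montel's theorem together with a diagonal argument yields a subsequence $(N_k)$ along which, for every fixed $j$, $f_j^{(N_k)}$ converges uniformly on compact subsets of $\mathbb{D}$ to some $f_j \in H^\infty$ with $\|f_j\|_\infty \le M$.

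\textbf{Passing to the limit.} Pointwise evaluation at $z_k$ gives $f_j(z_k) = \delta_{jk}$. For any fixed $J$ and $z \in \mathbb{D}$, as soon as $N_k \ge J$,
\[
\sum_{j=1}^J |f_j^{(N_k)}(z)| \le \sum_{j=1}^{N_k} |f_j^{(N_k)}(z)| \le M_{N_k} \le M,
\]
and letting first $k \to \infty$ and then $J \to \infty$ delivers $\sum_j |f_j(z)| \le M$, completing the argument.

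The main obstacle is the finite-dimensional step: one must produce a linear section of $\pi_N$ whose norm matches $M_N$ exactly, not merely some larger universal constant (say, a Jones-type constant $C(\delta)$ from Lemma~\ref{estimates}). Because the minimum-norm interpolant of a fixed tuple $(a_1,\dots,a_N)$ is not in general a linear function of the data, the construction cannot be carried out by pointwise optimization; a structural tool such as commutant lifting on $K_{B_N}$ (or, equivalently, the injectivity of $\ell^\infty_N$ combined with a duality argument) is required in order to secure the sharp constant.
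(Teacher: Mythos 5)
The paper does not prove Theorem~\ref{thm:pbeurling}; it cites Garnett (p.~285 of \cite{Garnett}). Your overall scaffolding---a reduction to the finite interpolation problem followed by a Montel/diagonal argument to pass to the limit---is reasonable and is in fact how the result is usually organized, and your limiting step is carried out correctly. The trouble is entirely in the finite-dimensional step, which is where the substance of P.~Beurling's theorem lies, and the justification you offer for it does not hold up.

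Concretely, you assert that commutant lifting ``produces a norm-preserving analytic extension that depends linearly on the data,'' and this is false. Sarason/Sz.-Nagy--Foias lifting says that for each fixed data vector $a$ there is some $\varphi_a \in H^\infty$ with $\varphi_a(z_j)=a_j$ ($1\le j\le N$) and $\|\varphi_a\|_\infty = \|D_a\|$, where $D_a$ is the associated operator on $K_{B_N}$; it says nothing about a norm-preserving \emph{linear} selection $a\mapsto \varphi_a$. In fact, for a finite node set the minimal-norm interpolant of a generic $a$ is unique---it is a unimodular constant times $\|D_a\|$ times a finite Blaschke product of degree at most $N-1$---so the extremal selection $a \mapsto \varphi_a$ is already forced upon you, and it is manifestly nonlinear (a sum of two Blaschke products is not a Blaschke product). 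There is therefore no way to get the linear section out of commutant lifting as stated. Your parenthetical alternative is also a red herring: $1$-injectivity of $\ell^\infty_N$ concerns extending maps \emph{into} $\ell^\infty_N$ (coordinatewise Hahn--Banach), whereas you need a norm-controlled linear map \emph{out of} $\ell^\infty_N$, i.e.\ a right inverse of a quotient map onto $\ell^\infty_N$; no abstract Banach-space principle gives this with the sharp constant, and indeed quotient maps of general Banach spaces onto $\ell^\infty_N$ need not have near-isometric linear right inverses. The upshot is that your ``key step'' is exactly the nontrivial content of P.~Beurling's theorem and is being asserted rather than proved. Garnett's argument for the finite case is a genuine extremal/duality argument tailored to $H^\infty$ (not an application of commutant lifting), and some such device is unavoidable here; the compactness wrapper you put around it is fine, but it cannot substitute for that missing core.
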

For versions of Theorem \ref{JonesInterp_asymp} with optimal constant, see \cite{NOCS} or \cite{PTW}.

Before stating the main result of this section, we recall the Commutant Lifting Theorem.  Given a Hilbert space $\Hc$ we let 
$$
H_{\Hc}(\mathbb{D}) = \left\{f : \mathbb{D} \to \Hc: f(z) = \sum_{n=0}^\infty a_n z^n, \|f\|_2^2:= \sum_{n=0}^\infty\left\| a_n\right\|_\Hc^2\right\}.
$$ 
Let $S_{\Hc}$ denote multiplication by $z$ on $H_{\Hc}^2$. We write $X \leftrightarrow Y$ to indicate that two operators commute.

\begin{thm}[Commutant Lifting Theorem for $H_{\Hc}^2$]
\label{thm:commlift}
Let $M \subseteq H_{\Hc}^2$ be an invariant subspace for $S_{\Hc}$ and suppose $X \in B(M)$ and $X^\star \leftrightarrow S_{\Hc}^\star|_{M}$. Then there exists $Y \in B(H_{\Hc}^2)$ such that 
\begin{enumerate}
\item $Y^\star|_{M} = X^\star$ $(X^\star$ lifts to $Y^\star)$;
\item $Y$ is in the commutant of $S_\mathcal{H}$;
\item $\|Y\| = \|X\|$.
\end{enumerate}
\end{thm}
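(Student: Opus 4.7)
My plan is to prove this as an instance of the classical Sz.-Nagy--Foias commutant lifting theorem, specialized to the unilateral shift on $H_{\Hc}^2$. Interpreting $M$ as coinvariant for $S_{\Hc}$ (the natural setting in the model-space context of the paper), the operator $T := (S_{\Hc}^\star|_M)^\star$ is a contraction on $M$ of which $S_{\Hc}$ on $H_{\Hc}^2$ is the minimal isometric dilation; the hypothesis $X^\star \leftrightarrow S_{\Hc}^\star|_M$ is then precisely that $X$ commutes with $T$. After scaling we may assume $\|X\| = 1$, and the task is to produce $Y$ in the closed unit ball of $B(H_{\Hc}^2)$ that commutes with $S_{\Hc}$ and satisfies $Y^\star|_M = X^\star$.

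The heart of the proof is a Parrott-type inductive one-step extension. Set $M_0 := M$ and $M_n := M \vee S_{\Hc} M \vee \cdots \vee S_{\Hc}^n M$, an increasing sequence of subspaces whose union is dense in $H_{\Hc}^2$. I would construct operators $Y_n \in B(M_n)$ with $\|Y_n\| \le 1$, with $Y_n^\star$ extending $Y_{n-1}^\star$, and satisfying the intertwining $S_{\Hc}^\star Y_n^\star = Y_{n-1}^\star S_{\Hc}^\star$ on the appropriate subspace; the base case $Y_0 := X$ is given. For the inductive step, decompose $M_n = M_{n-1} \oplus (M_n \ominus M_{n-1})$ and represent $Y_n^\star$ as a $2 \times 2$ block operator: extension and intertwining determine the off-diagonal blocks and one diagonal block, and Parrott's lemma selects the remaining block so that $\|Y_n^\star\| \le \max(\|\text{row}\|, \|\text{column}\|)$. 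Both the row and column norms turn out to be at most one, by the inductive hypothesis $\|Y_{n-1}\| \le 1$ together with the commutation relation.

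To conclude, I would extend each $Y_n^\star$ by zero on $M_n^\perp$ to a member of the unit ball of $B(H_{\Hc}^2)$, and use weak-$\star$ compactness (Banach--Alaoglu) to pass to a subnet converging to some $Y^\star$ with $\|Y^\star\| \le 1$. The properties of the approximants persist on the dense set $\bigcup_n M_n$ and extend by continuity to all of $H_{\Hc}^2$, yielding $Y S_{\Hc} = S_{\Hc} Y$ and $Y^\star|_M = X^\star$. The norm equality $\|Y\| = \|X\|$ is then immediate: we have $\|Y\| \le 1 = \|X\|$ by construction, while the reverse inequality follows from $\|Y\| = \|Y^\star\| \ge \|Y^\star|_M\| = \|X^\star\| = \|X\|$ by the lifting property.

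The main obstacle is the Parrott step, specifically the verification that the row and column blocks of $Y_n^\star$ each have norm at most one. This is exactly where the commutation hypothesis $X^\star \leftrightarrow S_{\Hc}^\star|_M$ enters the argument, since without the intertwining the block data in Parrott's form would be incompatible and no contractive extension could exist. Given how standard this theorem is in dilation theory, however, I expect the paper to take it as a black box rather than reproduce the inductive construction.
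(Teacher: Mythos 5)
Your expectation is correct: the paper invokes this as the classical Sz.-Nagy--Foias Commutant Lifting Theorem and gives no proof of its own, so there is no argument in the paper to compare against; what follows is an assessment of your outline on its own terms. Your reinterpretation of the hypothesis -- reading ``invariant for $S_{\Hc}$'' as $S_{\Hc}^\star$-invariant -- is not merely natural but necessary: a subspace invariant under both $S_{\Hc}$ and $S_{\Hc}^\star$ reduces the shift and is therefore trivial, and the application in the paper takes $M = K_B^N$, which is precisely $S^\star$-invariant. The Parrott one-step-extension scheme you describe is the standard route to the result, and the closing norm argument $\|Y\| \ge \|Y^\star|_M\| = \|X\|$ correctly upgrades $\|Y\| \le \|X\|$ to equality.

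The one genuine gap is the assertion that $S_{\Hc}$ on $H_{\Hc}^2$ is the \emph{minimal} isometric dilation of $T := (S_{\Hc}^\star|_M)^\star$. That requires $\bigvee_{n \ge 0} S_{\Hc}^n M = H_{\Hc}^2$, which is not automatic for an arbitrary $S_{\Hc}^\star$-invariant subspace $M$. When it fails, your inductive construction (since $M_n \subseteq \bigvee_{k} S_{\Hc}^k M$ for every $n$) only produces a commuting lift on the $S_{\Hc}$-invariant subspace $\mathcal{K}_0 := \bigvee_n S_{\Hc}^n M$; the further extension to a commutant element on all of $H_{\Hc}^2$ is a separate step, and in the operator-valued setting it is not automatic: writing $\mathcal{K}_0 = \Theta H^2_{\mathcal{E}}$ by Beurling--Lax--Halmos, the lift corresponds to some $\Psi \in H^\infty(\mathcal{E} \to \mathcal{E})$, and one would need $\Phi \in H^\infty(\Hc \to \Hc)$ solving $\Phi\Theta = \Theta\Psi$, which need not exist when $\dim\mathcal{E} < \dim\Hc$. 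For the case the paper actually uses -- scalar $\Hc = \C$ and $M = K_\Theta$ with $\Theta$ a nonconstant inner function -- one verifies $\bigvee_n S^n K_\Theta = H^2$ directly: $k_0^\Theta = 1 - \overline{\Theta(0)}\Theta$ and $S^\star\Theta$ both lie in $K_\Theta$, so $z^n(1 - \overline{\Theta(0)}\Theta)$ and $z^n(\Theta - \Theta(0))$ lie in the span, whence so does $z^n(1 - |\Theta(0)|^2)$ for every $n$. So the gap does not affect the paper's application, but to prove the theorem as you have set it up you should either add the hypothesis $\bigvee_n S_{\Hc}^n M = H_{\Hc}^2$ or supply the extension step.
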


We use  Theorem \ref{thm:pbeurling} and Theorem \ref{thm:commlift} to obtain
\begin{thm}\label{thm:replacement} 
Let $\{z_n\}$ be a thin sequence. Then for every $\varepsilon > 0$ there exists $N$  such that for $n \ge N$ there exist $f_n \in H^\infty$ such that for $j, k \ge N$ we have $$f_n(z_n) = 1, f_n(z_k) = 0, j \ne k,$$ and for all $z \in \mathbb{D}$ we have $$\sum_{n \ge N} |f_n(z)| < (1 + \varepsilon).$$ In particular, for every sequence $a \in \ell^\infty$ with $\|a\|_{\ell^\infty} \le 1$ the function $g_a$ defined by $g_a(z):= \sum_{n \ge N} a_n f_n(z) \in H^\infty$, $\|g_a\|_\infty \le  (1 + \varepsilon)\|a\|_{N, \ell^\infty}$ and $g_a(z_j) = a_j$ for $j \ge N$.
\end{thm}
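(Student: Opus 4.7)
The plan is to combine the commutant lifting theorem with Volberg's characterization of thin sequences to show that the Nevanlinna--Pick interpolation constant $M_N$ for the tail $\{z_n\}_{n\ge N}$ can be made arbitrarily close to $1$ by taking $N$ large, and then to feed that bound into Theorem~\ref{thm:pbeurling} to produce the P.~Beurling functions $f_n$ with $\sum_{n\ge N}|f_n(z)|\le M_N\le 1+\varepsilon$. The statement about $g_a$ will then be an immediate consequence of the summability of $\sum|f_n|$.

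First I would fix $\varepsilon>0$. By Volberg's theorem (Theorem~\ref{Volberg}) together with the AOB inequalities~\eqref{thininequality}, thinness of $\{z_n\}$ allows me to choose $N$ so large that the Riesz constants $c_N, C_N$ for the tail $\{h_{z_n}\}_{n\ge N}$ satisfy $\sqrt{C_N/c_N}<1+\varepsilon$. Since $\{h_{z_n}\}_{n\ge N}$ is also complete in $K_{B_N}$ (where $B_N$ is the Blaschke product with zeros $\{z_n\}_{n\ge N}$), it is a Riesz basis of $K_{B_N}$. Given $a\in\ell^\infty$ with $\|a\|_{\ell^\infty}\le 1$, I define $X_a$ on $K_{B_N}$ by $X_a^\star h_{z_n}=\overline{a_n}h_{z_n}$, extended linearly. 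For $f=\sum c_n h_{z_n}\in K_{B_N}$ the Riesz bounds give $\|X_a^\star f\|^2\le C_N\sum|c_n a_n|^2\le (C_N/c_N)\|a\|_{\ell^\infty}^2\|f\|^2$, so $\|X_a\|\le 1+\varepsilon$.

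Since $X_a^\star$ and the compression to $K_{B_N}$ of the backward shift $S_z^\star$ share the generalized eigenbasis $\{h_{z_n}\}$, they commute. The commutant lifting theorem (Theorem~\ref{thm:commlift}), in its standard Nevanlinna--Pick specialization, therefore produces $f_a\in H^\infty$ whose multiplier lifts $X_a$, with $\|f_a\|_\infty=\|X_a\|\le 1+\varepsilon$ and $f_a(z_n)=a_n$ for all $n\ge N$. This bounds the interpolation constant $M_N$ for $\{z_n\}_{n\ge N}$ by $1+\varepsilon$. Theorem~\ref{thm:pbeurling}, applied to the interpolating subsequence $\{z_n\}_{n\ge N}$, then produces $H^\infty$ functions $f_n$ with $f_n(z_n)=1$, $f_n(z_k)=0$ for $k\ne n$ (both at least $N$), and $\sum_{n\ge N}|f_n(z)|\le M_N\le 1+\varepsilon$ for every $z\in\D$. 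The claim about $g_a$ is immediate: $g_a(z)=\sum_{n\ge N}a_n f_n(z)$ converges absolutely and uniformly on compact subsets of $\D$, lies in $H^\infty$ with $\|g_a\|_\infty\le (1+\varepsilon)\|a\|_{N,\ell^\infty}$, and the Kronecker property of $f_n$ yields $g_a(z_j)=a_j$ for $j\ge N$.

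The main obstacle is the quantitative passage from the AOB property of the normalized reproducing kernels---a basis-geometric fact provided by Volberg's theorem---to a bound on the Nevanlinna--Pick multiplier norm. Volberg supplies Riesz constants approaching $1$, and commutant lifting converts this into an interpolation constant approaching $1$; the only delicate point is matching the formulation of CLT stated in Theorem~\ref{thm:commlift} (which concerns $S$-invariant subspaces of $H_{\Hc}^2$) to the compression-to-$K_{B_N}$ picture needed for Nevanlinna--Pick interpolation, but this is a standard specialization.
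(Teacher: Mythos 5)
Your proposal follows essentially the same route as the paper's proof: Volberg's theorem to control the tail of the reproducing-kernel sequence, a diagonal operator $X_a^\star h_{z_n}=\overline{a_n}h_{z_n}$ whose norm is controlled by that tail estimate, commutant lifting (in its Nevanlinna--Pick form) to convert this into an $H^\infty$ interpolation bound close to $1$, and finally Per Beurling's Theorem~\ref{thm:pbeurling} to produce the $f_n$. The only cosmetic difference is that you bound $\|X_a^\star\|$ directly from the AOB Riesz constants $c_N,C_N\to 1$ (part (2) of Volberg's theorem), giving the constant $\sqrt{C_N/c_N}<1+\varepsilon$, whereas the paper uses part (3) of Volberg's theorem (the $U+K$ decomposition with $K$ compact) to write $T_a^\star=(U+K)D_a^\star(U+K)^{-1}$ on the tail and obtain the constant $\tfrac{1+\varepsilon}{1-\varepsilon}$; these are two equivalent windows onto the same estimate.
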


%Let $\{z_n\}$ be a thin interpolating sequence. Then for every $\varepsilon > 0$ there exists $N$ such that for every sequence $a \in \ell^\infty$ with $\|a\|_{\ell^\infty} \le 1$ there exist $f_n \in H^\infty$ such that for $j, k \ge N$ we have $$f_n(z_n) = 1, f_n(z_k) = 0, j \ne k,$$ and for all $z \in \mathbb{D}$ we have $$\sum_{n \ge N} |a_n f_n(z)| < (1 + \varepsilon)\|a\|_{N, \ell^\infty}.$$

\proof The sequence $\{z_n\}$ is eventually interpolating, so
by Theorem \ref{Volberg},  
there exist a separable Hilbert space $\mathcal{K}$, an orthonormal basis $\{e_n\}_{n \ge k}$ for $\mathcal{K}$ and $U, K: \mathcal{K} \to K_B$, $U$ unitary, $K$ compact, $U + K$ invertible, such that 
$(U + K)(e_n) = h_{z_n}$   for $n \ge k$.       Let $\varepsilon > 0$. Hence there exists $N \ge k$ such that $K$ has norm less than $\varepsilon $ on 
$\mathcal{K}_N = \overline{\textnormal{span}\{e_n: n\ge N\}}$. Let $K^N_B= \overline{\textnormal{span}\{h_{z_n}: n\ge N\}} \subset K_B$.     Then
$$
   U+K : \mathcal{K}_N \rightarrow K^N_B
$$
is  invertible with an inverse of norm less than $\frac{1}{1-\varepsilon}$. Now for $a \in l^\infty$, define the operator
$$
   T^*_a : K^N_B  \rightarrow K^N_B, \quad h_{z_n} \mapsto \bar a_n h_{z_n}    \quad (n \ge N).
$$
Writing
$$
     T^*_a =   (U+K)       D^*_a (U+K)^{-1}
$$
with 
$$
    D^*_a: \mathcal{K}_N \rightarrow \mathcal{K}_N, \quad e_n \mapsto \bar a_n e_n    \quad (n \ge N),
$$
we see that
$$
   \|   T^*_a \|_{K^N_B \to K^N_B} <   \frac{1+ \varepsilon}{1-\varepsilon} \|a\|_{\ell^\infty}.
$$
The normalized reproducing kernels $h_{z_n}$ are eigenvectors of the adjoint of the shift operator on $H^2$, denoted $S^*$. Therefore, $K^N_B$ is invariant under $S^*$,
and the restriction of $S^*$ to $K^N_B$ commutes with $T^*_a$. By the Commutant Lifting Theorem, Theorem    \ref{thm:commlift}, there exists an operator
$T^*:H^2 \rightarrow H^2$ extending $T^*_a$ that commutes with $S^*$ and satisfies $\|T^*\|_{2 \to 2} = \|T^*_a\|_{K^N_B   \to K^N_B}$. Since
$T^*$ commutes with $S^*$, it is the adjoint of a Toeplitz operator $T_\phi$ with symbol $\phi \in H^\infty$ and 
$$
     \| \phi\|_\infty = \|T_\phi\|_{2 \to 2}= \|T^*\|_{2 \to 2} = \|T^*_a\|_{K^N_B   \to K^N_B} < \frac{1+ \varepsilon}{1-\varepsilon} \|a\|_{\ell^\infty}.      
$$
Since $T^*$ extends $T^*_a$, and the $ h_{z_n} $ are eigenvectors of $T_\phi^*$ with eigenvalue $\bar \phi(z_n)$,    we obtain
$$
   \bar \phi (z_n)  h_{z_n} =   T_\phi^*  h_{z_n}  =    T_a^*  h_{z_n}  = \bar a_n  h_{z_n}    \quad \text{ for all } n \ge N
$$
and consequently 
$$
      \phi (z_n) = a_n  \quad \text{ for all } n \ge N,   \quad  \| \phi\|_\infty < \frac{1+ \varepsilon}{1-\varepsilon} \|a\|_{\ell^\infty}.
$$
Since this applies to any sequence in $l^\infty$, we can apply Per Beurling's Theorem \ref{thm:pbeurling} for the sequence $\{z_n\}_{n \ge N} $ with constant
$M =  \frac{1+ \varepsilon}{1-\varepsilon}$      to obtain functions
$f_n \in H^\infty$ for $n \ge N$ such that for $j, k \ge N$ we have $f_n(z_n) = 1, f_n(z_k) = 0, j \ne k$, and
$$
\sum_{n \ge N}  |f_j(z)| \le     \frac{1+ \varepsilon}{1-\varepsilon} .
$$
In particular, for any $\{a_n\} \in l^\infty$,
 $$\sum_{n \ge N} |a_n f_n(z)| < (1 + 3 \varepsilon)\|a\|_{N, \ell^\infty}.$$
\qed

The following definition in $H^p$ will play a significant role in Section~\ref{asip}. For now we consider just the case $p = \infty$.

\begin{defin}
We say that a sequence $\{z_n\}$ is an eventual $1$-interpolating sequence for $H^\infty$ $(EIS_\infty)$ if the following holds: For every $\varepsilon > 0$ there exists $N$ such that for each $a \in \ell^\infty$ there exists $f_N \in H^\infty$ with
$$f_N(z_n) = a_n ~\mbox{for}~ n \ge N ~\mbox{and}~ \|f_N\|_\infty \le (1 + \varepsilon) \|a\|_{N, \ell^\infty}.$$
\end{defin} 
Putting all of this together, we obtain the following corollary.

\begin{cor}    \label{cor:eip}    A sequence $\{z_n\}$ is thin if and only if it is an $EIS_\infty$ sequence.\end{cor}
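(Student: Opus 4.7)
The plan is to prove the two implications separately. The forward direction (thin implies $EIS_\infty$) is an immediate repackaging of Theorem~\ref{thm:replacement}. Given $\varepsilon > 0$, choose $\varepsilon'$ small enough that $1 + 3\varepsilon' \le 1 + \varepsilon$ and apply that theorem to obtain an integer $N$ and P.\ Beurling-type functions $\{f_n\}_{n \ge N}$ satisfying $f_n(z_n) = 1$, $f_n(z_k) = 0$ for $N \le k \ne n$, and $\sum_{n \ge N} |f_n(z)| \le 1 + \varepsilon$ on $\D$. For any $a \in \ell^\infty$, the function $g_a(z) := \sum_{n \ge N} a_n f_n(z)$ lies in $H^\infty$, interpolates $a_n$ at $z_n$ for $n \ge N$, and satisfies $\|g_a\|_\infty \le (1+\varepsilon)\|a\|_{N,\ell^\infty}$, which is the $EIS_\infty$ property.

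For the converse, suppose $\{z_n\}$ is $EIS_\infty$, fix $\varepsilon > 0$, and let $N$ be as in the definition. For each $j \ge N$, apply the $EIS_\infty$ property to the bounded sequence with $1$ in position $j$ and $0$ elsewhere on $\{n \ge N\}$ to produce $\phi_j \in H^\infty$ with $\phi_j(z_j) = 1$, $\phi_j(z_n) = 0$ for all $N \le n \ne j$, and $\|\phi_j\|_\infty \le 1 + \varepsilon$. For each integer $M > N$, the finite Blaschke product $b_{j,M}$ with zeros $\{z_k : N \le k \le M,\ k \ne j\}$ divides $\phi_j$ in $H^\infty$; since $|b_{j,M}| = 1$ on $\T$, we have $\|\phi_j/b_{j,M}\|_\infty = \|\phi_j\|_\infty$. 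Evaluating at $z_j$ gives
$$
\prod_{\substack{N \le k \le M\\ k \ne j}} \left|\frac{z_j - z_k}{1 - \overline{z_k} z_j}\right| \;\ge\; \frac{1}{1+\varepsilon},
$$
and letting $M \to \infty$ yields the same bound for the infinite tail product.

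To conclude that $\delta_j \to 1$, split
$$
\delta_j \;=\; \prod_{k < N}\left|\frac{z_j - z_k}{1 - \overline{z_k} z_j}\right| \;\cdot\; \prod_{\substack{k \ge N\\ k \ne j}}\left|\frac{z_j - z_k}{1 - \overline{z_k} z_j}\right|.
$$
The second factor is at least $(1+\varepsilon)^{-1}$ for $j \ge N$. The first is a finite product of terms each tending to $1$ as $|z_j| \to 1$, so it remains to verify $|z_j| \to 1$. This is itself a consequence of $EIS_\infty$: if some subsequence $z_{j_m}$ with $j_m \ge N$ converged to a point $z_0 \in \D$, apply the property to the bounded sequence taking the value $+1$ on $\{j_{2m}\}$, $-1$ on $\{j_{2m+1}\}$, and $0$ elsewhere, to produce $f \in H^\infty$ whose values at $z_{j_m}$ alternate between $\pm 1$ while $z_{j_m} \to z_0$, contradicting continuity of $f$ at $z_0$. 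Hence $|z_j| \to 1$, and taking $\liminf_{j \to \infty}$ in the splitting above gives $\liminf_j \delta_j \ge (1+\varepsilon)^{-1}$. Since $\varepsilon > 0$ was arbitrary and $\delta_j \le 1$, we conclude $\delta_j \to 1$, i.e., the sequence is thin.

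The hard part will be the converse direction: specifically, extracting from the existence of a single norm-controlled interpolant $\phi_j$ a sharp uniform lower bound on the tail Blaschke product by dividing out finite sub-products, together with the separate verification that $|z_j| \to 1$ so that the finite ``head'' of $\delta_j$ becomes negligible. The forward direction is essentially a reading of Theorem~\ref{thm:replacement}.
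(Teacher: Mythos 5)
Your proposal is correct and follows essentially the same route as the paper's proof: forward direction by invoking Theorem~\ref{thm:replacement}, converse by interpolating the $j$-th coordinate sequence, dividing out the Blaschke factor at the remaining $\{z_n\}_{n\ge N,\,n\ne j}$, and evaluating at $z_j$ to bound the tail product below by $1/(1+\varepsilon)$. The one place you go beyond the paper is in explicitly handling the finite ``head'' product over $k<N$: you verify $|z_j|\to 1$ via a continuity/alternating-sequence contradiction, whereas the paper leaves this implicit (it follows at once from the fact that the existence of a nonzero interpolant forces $\{z_n\}_{n\ge N}$ to be a Blaschke sequence, hence $|z_n|\to 1$). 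This is a worthwhile detail to supply, but it is a refinement of the same argument rather than a different approach.
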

\begin{proof} If $\{z_n\}$ is thin, then it is eventually interpolating, and an $EIS_\infty$ sequence by Theorem \ref{thm:replacement}. Conversely, if $\{z_n\}$ is an $EIS_\infty$
sequence, for each $\varepsilon$ there exists an $N \in \N$ such that for each $j \ge N$, there exists $f_j \in H^\infty$ with
$$
         f_j(z_n) = 0 \text{ for all } n \ge N, n \neq j, \quad f_j(z_j)=1  \text{ and } \|f_j\|_\infty < 1 + \varepsilon.
$$
Letting $B_{j,N}$ denote the Blaschke product for the sequence $\{z_n\}_{n \ge N, n \neq j}$, we obtain that $f = B_{j,N} g_j$, where $g_j \in H^\infty$
with $\|g_j\|_\infty < 1 + \varepsilon$. It follows that 
$$
                   \Pi_{n \ge N, n \neq j} \left|\frac{z_n - z_j}{1 - \bar z_j z_n} \right| =          |B_{j,N}(z_j)| \ge \frac{1}{1 + \varepsilon} \text{ for all } j \ge N
$$
and consequently $\{z_n\}$ is thin.
\end{proof}

%We have shown that every thin interpolating sequence is an $EIS_\infty$ sequence and Theorem~\ref{main} will show that when $\{z_n\}$ is an interpolating sequence, the two are equivalent. 
Hence, as we will show in a moment, Theorem~\ref{newDN} below will provide a new proof of Theorem~\ref{thm:DN}.

\begin{thm}[Dyakonov, Nicolau \cite{DN}]\label{thm:DN}
Let $\{z_j\}$ be an interpolating sequence. Then the following are equivalent:
\begin{enumerate}
\item $\{z_j\}$ is thin;
\item There is a sequence $m_j \in (0, 1)$ with $\lim_{j \to \infty} m_j = 1$ such that every interpolation problem $F(z_j) = a_j$ with $|a_j| \le m_j$ for all $j$ has a solution in $H^\infty$ with $\|F\|_\infty \le 1$.
\end{enumerate}
\end{thm}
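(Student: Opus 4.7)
\noindent\emph{Proof plan.} The direction (2)$\Rightarrow$(1) follows from Blaschke factorization. Fix $k$ and apply (2) to the spike data $a_k = m_k$, $a_j = 0$ for $j \ne k$: this yields $F_k \in H^\infty$ with $\|F_k\|_\infty \le 1$, $F_k(z_k) = m_k$, and $F_k(z_j) = 0$ for every $j \ne k$. Since $F_k$ vanishes on $Z \setminus \{z_k\}$, factor $F_k = B_k G_k$ with $G_k \in H^\infty$; because $B_k$ is inner, $\|G_k\|_\infty = \|F_k\|_\infty \le 1$. Evaluating at $z_k$ gives $m_k = B_k(z_k) G_k(z_k)$, so $\delta_k = |B_k(z_k)| \ge m_k$. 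Since $m_k \to 1$, $\delta_k \to 1$, and $\{z_j\}$ is thin.

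For (1)$\Rightarrow$(2), my main tool is Theorem~\ref{thm:replacement}. Pick $\varepsilon_n \searrow 0$. For each $n$, Theorem~\ref{thm:replacement} produces an integer $N_n$ (which may be taken strictly increasing) and P.~Beurling functions $\{f_j^{(n)}\}_{j \ge N_n}$ with $f_j^{(n)}(z_k) = \delta_{jk}$ for $j,k \ge N_n$ and $\sum_{j \ge N_n} |f_j^{(n)}(z)| < 1 + \varepsilon_n$ on $\mathbb{D}$; thinness also ensures $\delta_j \ge 1 - \varepsilon_n$ for $j \ge N_n$. I will define $m_j$ block-wise so that $m_j \to 1$: on $[N_n, N_{n+1})$ set $m_j = (1 - \varepsilon_n)/(1 + \varepsilon_n)$, and for $j < N_1$ set $m_j$ of order $\varepsilon_1/M_{N_1}$, where $M_{N_1}$ is the $H^\infty$-interpolation constant of the finite set $\{z_1,\dots,z_{N_1-1}\}$.

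Given admissible $a$ with $|a_j| \le m_j$, I would build $F$ iteratively: at stage $n$ use the level-$n$ P.~Beurling functions to interpolate the values on the block $[N_n, N_{n+1})$ while correcting the residues left by earlier stages, so that the correction at stage $n$ has $H^\infty$-norm of order $\varepsilon_n$. Summing gives $F \in H^\infty$ with $F(z_j) = a_j$ for every $j$. The main obstacle is ensuring the cumulative $H^\infty$-norm stays at most $1$; a single-stage version of the scheme only yields $m_j$ uniformly bounded below $1$ on the tail rather than $m_j \to 1$, so the iteration together with a geometric-series estimate on $\{\varepsilon_n\}$ is essential. An alternative, less constructive route invokes Theorem~\ref{thm:commlift}: as in the proof of Theorem~\ref{thm:replacement}, the diagonal operator $T_a^* h_{z_n} = \bar a_n h_{z_n}$ on $K_B$ lifts to a Toeplitz operator $T_\phi^*$ with $\phi(z_n) = a_n$ and $\|\phi\|_\infty = \|T_a^*\|_{K_B}$, and the Gram-matrix identity $G = I + K$ with $K$ compact from Proposition~\ref{prop2CFT} yields the estimate $\|T_a^*\|_{K_B} \le 1$ under the prescribed constraint $|a_j| \le m_j$, giving the desired interpolant $F = \phi$.
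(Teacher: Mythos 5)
Your (2)$\Rightarrow$(1) argument is correct and clean: the spike data $a = m_k e_k$ is admissible, the interpolant $F_k$ vanishes on $Z\setminus\{z_k\}$, and the inner--outer (Blaschke) factorization $F_k = B_k G_k$ gives $\delta_k = |B_k(z_k)| \ge m_k \to 1$. This is the same Blaschke-factorization trick the paper uses in the converse half of Corollary~\ref{cor:eip}, only applied directly to the $m_j$'s; it is a perfectly good shortcut.

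The (1)$\Rightarrow$(2) direction has a genuine gap, and you have in fact identified it yourself without resolving it. The level-$n$ P.~Beurling functions $f_j^{(n)}$, $j\ge N_n$, control values only at $z_k$ for $k\ge N_n$; there is no constraint on $f_j^{(n)}(z_k)$ for $k< N_n$. Thus the stage-$n$ block interpolant spoils the already-set values on all earlier blocks, and later stages keep doing so. Your phrase ``correcting the residues left by earlier stages'' has the causality backwards, and a plain sum $\sum_n \sum_{j\in[N_n,N_{n+1})} a_j f_j^{(n)}$ does not converge in $H^\infty$ with norm $\le 1$ --- each block contributes a summand of size roughly $1-\varepsilon_n$, which is not summable. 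This is exactly the point where the paper's proof of Theorem~\ref{newDN} does real work: it constructs, from a thin tail, auxiliary functions $F_n, G_n$ via M\"obius transformations applied to a near-idempotent $k_n$, arranged so that $F_n$ is $\approx 1$ on $\{z_j: j\ge N_n\}$ and $\approx 0$ before, $G_n$ the reverse, and crucially $|F_n| + |G_n| < \gamma_n$ with $\prod \gamma_n$ convergent. The interpolants are then glued by $g_{n+1} = (g_n G_n + f_{n+1} F_n)/\gamma_n$, which keeps the $H^\infty$-norm $\le 1$ at every stage; the $m_j$ are read off from the accumulated products of $\gamma$'s. Nothing in your sketch replaces this gluing mechanism; ``a geometric-series estimate on $\{\varepsilon_n\}$'' is not by itself enough, because the obstacle is interference between stages, not just norm accumulation.

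Your alternative Commutant-Lifting route is also not a proof as written. On the full model space $K_B$ the similarity $T_a^* = (U+K) D_a^* (U+K)^{-1}$ gives only $\|T_a^*\| \le \|U+K\|\,\|D_a^*\|\,\|(U+K)^{-1}\|$, and $\|U+K\|$ and $\|(U+K)^{-1}\|$ are close to $1$ only on tail subspaces $\mathcal{K}_N$, not on all of $\mathcal{K}$. Compactness of $K$ together with $|a_j|\le m_j$ does not immediately yield $\|T_a^*\|_{K_B\to K_B}\le 1$; one would have to exploit the fact that the $m_j$ are chosen small precisely where $K$ is large, which again requires an explicit block decomposition and is essentially the same difficulty in different clothing. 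To complete the proof you need to supply the gluing construction (or an equivalent quantitative estimate), not merely assert that an iteration exists.
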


\begin{thm}\label{newDN} Let $\{z_n\}$ be an interpolating sequence. Then $\{z_n\}$ is an $EIS_\infty$ sequence if and only if there exists a sequence $m_j \in (0, 1)$ with $m_j \to 1$ such that any sequence $\{a_j\}$ satisfying $|a_j| \le m_j$ for all $j$ can be interpolated by an $H^\infty$ function of norm $1$.
\end{thm}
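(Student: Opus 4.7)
The plan is to prove the equivalence in both directions. The backward direction reduces to a straightforward scaling argument; the forward direction combines the improved P.~Beurling functions from Theorem \ref{thm:replacement} with a weak-$*$ compactness argument.

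For the backward direction, suppose a sequence $m_j \in (0,1)$ with $m_j \to 1$ has the stated norm-one interpolation property. Given $\varepsilon > 0$, choose $N$ with $m_j \ge 1/(1+\varepsilon)$ for all $j \ge N$. For any $a \in \ell^\infty$ with $\|a\|_{N, \ell^\infty} > 0$, set $\tilde a_j := a_j/((1+\varepsilon)\|a\|_{N, \ell^\infty})$ for $j \ge N$ and $\tilde a_j := 0$ for $j < N$. Then $|\tilde a_j| \le 1/(1+\varepsilon) \le m_j$ for $j \ge N$ and $|\tilde a_j| = 0 \le m_j$ for $j < N$, so the hypothesis yields $\tilde f \in H^\infty$ with $\|\tilde f\|_\infty \le 1$ and $\tilde f(z_j) = \tilde a_j$. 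The function $f := (1+\varepsilon)\|a\|_{N, \ell^\infty}\tilde f$ then satisfies $f(z_j) = a_j$ for $j \ge N$ and $\|f\|_\infty \le (1+\varepsilon)\|a\|_{N, \ell^\infty}$, as required.

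For the forward direction, assume $\{z_n\}$ is $EIS_\infty$; by Corollary \ref{cor:eip} the sequence is thin, so Theorem \ref{thm:replacement} applies. Pick $\varepsilon_k \searrow 0$ and strictly increasing integers $N_k$ together with P.~Beurling functions $\{f_n^{(k)}\}_{n \ge N_k}$ satisfying $f_n^{(k)}(z_j) = \delta_{nj}$ for $n, j \ge N_k$ and $\sum_{n \ge N_k} |f_n^{(k)}(z)| \le 1+\varepsilon_k$ on $\D$. Since $\delta_n \to 1$, after enlarging each $N_k$ if necessary one may also assume $|B_{N_k}^{\mathrm{fin}}(z_n)| \ge 1-\varepsilon_k$ for all $n \ge N_k$, where $B_{N_k}^{\mathrm{fin}}$ denotes the finite Blaschke product whose zero set is $\{z_j\}_{j < N_k}$. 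The renormalized functions $\tilde f_n^{(k)} := (B_{N_k}^{\mathrm{fin}}/B_{N_k}^{\mathrm{fin}}(z_n))f_n^{(k)}$ then satisfy $\tilde f_n^{(k)}(z_j) = \delta_{nj}$ for \emph{every} $j \ge 1$, with $\sum_{n \ge N_k}|\tilde f_n^{(k)}(z)| \le (1+\varepsilon_k)/(1-\varepsilon_k)$ on $\D$, giving a version of $EIS_\infty$ that also zeros out prescribed finite initial data.

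Using this tool, define $m_j \to 1$ piecewise on the intervals $[N_k, N_{k+1})$ in such a way that for every $\{a_j\}$ with $|a_j| \le m_j$ and every $k$, the finite Pick matrix $\bigl((1-a_i\bar a_j)/(1-z_i \bar z_j)\bigr)_{i,j=1}^{N_k}$ is positive semi-definite. By the finite Nevanlinna-Pick theorem this produces $F_k \in H^\infty$ with $\|F_k\|_\infty \le 1$ and $F_k(z_j) = a_j$ for $1 \le j \le N_k$. The sequence $\{F_k\}$ lies in the (weak-$*$ sequentially compact) unit ball of $H^\infty$, so it has a weak-$*$ convergent subsequence with limit $F \in H^\infty$, $\|F\|_\infty \le 1$. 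Since each point evaluation $f \mapsto f(z_j)$ is weak-$*$ continuous on $H^\infty$ and $F_k(z_j) = a_j$ whenever $N_k \ge j$, the limit $F$ satisfies $F(z_j) = a_j$ for every $j$.

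The main obstacle is constructing the sequence $m_j \to 1$ with the required finite Pick PSD property. The decomposition $G = I + K$ with $K$ compact from Proposition \ref{prop2CFT}, equivalent to the thinness hypothesis, is precisely the structural input that makes such a choice possible: writing the Pick condition as $(I - D_a D_{\bar a}) + (K - D_a K D_{\bar a}) \ge 0$, the diagonal part is positive (vanishing as $|a_j| \to 1$) while the compact off-diagonal part can be controlled using the decay of the entries of $K$. Balancing the positivity margin on the diagonal against the compact correction is what allows us to push $m_j$ arbitrarily close to $1$ while preserving PSDness at every finite scale.
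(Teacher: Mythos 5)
The backward direction is correct and matches the paper's scaling argument; indeed your version is slightly more careful than the paper's, since you explicitly set $\tilde a_j = 0$ for $j < N$ so that the hypothesis "$|a_j| \le m_j$ for \emph{all} $j$" can be invoked.

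The forward direction, however, has a genuine gap. You have correctly reduced the problem: choose $m_j \to 1$ so that for all data with $|a_j| \le m_j$ the finite Pick matrices are PSD, invoke finite Nevanlinna--Pick, and pass to a global solution by weak-$*$ compactness of the unit ball of $H^\infty$ (that closing argument is fine, and the normalized--kernel identity $\tilde P = (I - D_a D_{\bar a}) + (K - D_a K D_{\bar a})$ is also correct). But the paragraph beginning ``The main obstacle is constructing the sequence $m_j$'' does not actually construct $m_j$; it only asserts that one can ``balance the positivity margin on the diagonal against the compact correction.'' This is precisely where the theorem lives. Compactness of $K$ alone is not enough to conclude: the diagonal part has entries $1 - |a_j|^2 \ge 1 - m_j^2$, which tend to $0$, while the correction $K - D_a K D_{\bar a}$ has no reason to decay at a compatible rate across different diagonal blocks. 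Making the balance work requires a quantitative block decomposition (controlling $\|K\|$ on the tail indices $\ge N_k$ against the finite block where $1 - m_j^2$ is bounded below) and an induction over the blocks $[N_k, N_{k+1})$, together with a careful tracking of accumulated constants --- exactly the bookkeeping that the paper's proof carries out via the M\"obius-built pairs $(F_n, G_n)$ and the product $\prod \gamma_j$. Also, the renormalization by $B_{N_k}^{\mathrm{fin}}$ is never used after it is introduced; the weak-$*$ limit argument only needs the PSDness of the finite Pick matrices, so that step is a dead end as written. In short: the skeleton (Pick + weak-$*$) is a genuinely different and potentially cleaner route than the paper's explicit iterated construction, but the central step is asserted rather than proved.
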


\begin{proof}
First suppose the sequence has associated constants $m_j$. Let $\varepsilon > 0$. There exists $N$ such that $m_j > \frac{1}{1 + \varepsilon}$ for $j \ge N$. Let $a \in \ell^\infty$ be such that $\|a\|_{\ell^\infty} \le 1$. %Now $a_k/(1 + \varepsilon) < m_j$ for every $j \ge N$ and every $k$. Define a new sequence by $b_k = 0$ for $k = 1, \ldots, N-1$ and $b_k = a_{k - N + 1}/(1 + \varepsilon)$ for $k \ge N$. We then apply Theorem~\ref{thm:replacement} to obtain $F \in H^\infty$ with $\|F\|_\infty \le 1+\varepsilon$ and $F(z_j) = a_{j - N + 1}$ for $j \ge N$. 
For $j \ge N$ we have $\frac{a_j}{(1 + \varepsilon)\|a\|_{N, \ell^\infty}} \le m_j$ and therefore there exists an $H^\infty$ function $F$ of norm $1$ with $F(z_j) =  \frac{a_j}{(1 + \varepsilon)\|a\|_{N, \ell^\infty}}$. The function $F_1 = (1 + \varepsilon)\|a\|_{N, \ell^\infty}\,F$ does the interpolation $F_1(z_j) = a_j$ and $\|F_1\| \le (1 + \varepsilon)\|a\|_{N, \ell^\infty}$, as desired.

\bigskip
For the converse we first introduce our notation, which will allow us to choose the $m_j$. \\

Choose $\varepsilon_j \to 0$. Let $b_{n-}$ denote the Blaschke product with zeros $\{z_j : j < n\}$. Then $b_{n-}(z_k) = 0$ if $k < n$ and $|b_{n-}(z_k)| \ge \delta_k$ if $k \ge n$. If we let $\delta_{n}^\prime = \inf_{k \ge n} \delta_k$, then $\delta_{n}^\prime \to 1$ as $n \to \infty$.

 Let $a_k = 0$ if $k < n$ and $a_k = \frac{1}{b_{n-}(z_k)}$ otherwise. 
 By assumption, given $\varepsilon > 0$ there exists $N$ and $f \in H^\infty$ such that $f(z_k) = a_k$ for $k \ge N$ and $\|f\|_{\infty} \le (1 + \varepsilon) \|a\|_{N, \infty}$. Therefore, multiplying yields $b_{n-}f$ satisfying $\|b_{n-} f\|_\infty \le \frac{1 + \varepsilon}{\delta_n^\prime}$, $b_{n-}f(z_j) = 1$ if $j \ge \max\{n, N\}$ and $b_{n-}f(z_j) = 0$ if $j < n$. So, if we choose $n \ge N$, we have a function $b_{n-} f$ that vanishes on $z_k$ for $k < n$, is equal to $1$ for $k \ge n$, and has norm at most $\frac{1 + \varepsilon}{\delta_n^\prime}$. Now let $t_n = \frac{\delta_n^\prime}{1 + \varepsilon_n}$ and $k_n = \frac{\delta_n^\prime b_{n-} f}{1 + \varepsilon_n} = t_n b_{n-}f$. Note that $\|k_n\|_\infty \le 1$.

Let $\rho$ denote the pseudohyperbolic metric on $\mathbb{D}$, $\rho(z,w) = |\frac{z-w}{1- \bar w z}|$ for $z,w \in \mathbb{D}$.
Choose $\delta_{t, n} \in (0, 1)$ so that 
$$\rho(-1 + \delta_{t, n}, 1 - \delta_{t, n}) = t_n = \rho(0, t_n),$$ 
and note that $\delta_{t, n} \to 0$. Now there is a M\"obius transformation $\varphi$ with $\varphi(0) = -1 + \delta_{t, n}$ and $\varphi(t_n) = 1 - \delta_{t, n}$, so
$$\varphi(k_n(z_j)) = -1 + \delta_{t, n} ~\mbox{for}~ j < n$$ and
$$\varphi(k_n(z_j)) = 1 - \delta_{t, n}~\mbox{for}~j \ge n.$$ Let $$h_n = \frac{\varphi \circ k_n}{1 - \delta_{t, n}}, F_n = \left(\frac{1 + h_n}{2}\right)^2~\mbox{and}~G_n = \left(\frac{1 - h_n}{2}\right)^2.$$ Then 
$$F_n(z_j) = 1~\mbox{for}~j \ge n~\mbox{and}~ F_n(z_j) = 0~\mbox{for}~ j < n,$$  
$$G_n(z_j) = 0~\mbox{for}~j \ge n~\mbox{and}~G_n(z_j) = 1~\mbox{for}~j < n$$ and $$|F_n| + |G_n| < \frac{1}{2} + \frac{1}{2(1 - \delta_{t,n})^2} := \gamma_n.$$

\bigskip
Now for each $\varepsilon_j$ we obtain $N_j$ and $b_{N_j-}$ giving us $\delta_{N_j}$ which, in turn, gives us $\delta_{t, N_j}$. We may choose $\varepsilon_j$ tending to $0$ quickly and $N_j$ as large as we like to ensure that $\delta_{t, N_j}$ is close to $1$. So,  we will  choose a subsequence of $\varepsilon_j$, which we denote by $\varepsilon_j$ again and corresponding $N_j$ so that the $\gamma_j$ as defined above satisfy $\prod_{j = 1}^\infty \gamma_j$ converges to a positive number. Now we will construct our sequence $\{m_j\}$.\\

{\bf Stage 1.} For $n < N_1$ we use the Jones construction to interpolate all sequences of norm smaller than $\frac{1}{C(\delta)}$ with a function of norm at most $1$. Call this function $f_1$.\\

{\bf Stage 2.} Now we know there is a function of norm at most $1 + \varepsilon_1$ that does interpolation on sequences of norm at most $1$ from $N_1$ on. Divide this function by $1 + \varepsilon_1$, call it $f_2$. Choose $F_1 := F_{N_1}$ and $G_1:= G_{N_1}$ as above and note that, with the corresponding $\gamma_1$ defined as above, we have $$|F_1| + |G_1| < \gamma_1.$$

So we see that
\begin{enumerate}
\item $(f_1 G_1 + f_2 F_1)(z_j) = f_2(z_j)$ for $j \ge N_1$;
\item $(f_1 G_1 + f_2 F_1)(z_j) = f_1(z_j)$ for $j < N_1$;
\item  $|f_1 G_1 + f_2 F_1| \le |G_1| + |F_1| <  \gamma_1$.
\end{enumerate}

\bigskip
We take $g_2 = \frac{f_1 G_1 + f_2 F_1}{\gamma_1}$. Then $\|g_2\|_\infty \le 1$ and $g_2$ can interpolate \\

\begin{enumerate}
\item[a)] $|a_j| \le \frac{1}{\gamma_1 C(\delta)}$ if $j < N_1$,\\ 
\item[b)] $|a_j| \le \frac{1}{\gamma_1 (1 + \varepsilon_1)}$ if $j \ge N_1$,\end{enumerate}
if we choose appropriate $f_1$ and $f_2$.
\\

{\bf Stage 3.} Now we choose $f_3$ of norm $1$ doing interpolation on sequences of norm smaller than $\frac{1}{1 + \varepsilon_2}$ from $N_2$ on. Then we choose $G_2:= G_{N_2}$ and $F_2 := F_{N_2}$ as above with $$|G_2| + |F_2| < \gamma_2.$$ Consider 
$$g_3:= \frac{g_2 G_2 + f_3 F_2}{\gamma_2} = \frac{f_1}{\gamma_1 \gamma_2} G_1G_2 + \frac{f_2}{\gamma_1 \gamma_2} F_1 G_2 + \frac{f_3}{\gamma_2} F_2 .$$ Then 
$$\|g_3\|_\infty \le \frac{|G_2| + |F_2|}{\gamma_2} \le 1,$$ and

$$g_3(z_j) = \frac{f_1(z_j)}{\gamma_1\gamma_2} ~\mbox{for}~ j < N_1,$$
$$g_3(z_j) = \frac{f_2(z_j)}{\gamma_1\gamma_2}~\mbox{for}~N_1 \le j < N_2,$$
$$g_3(z_j) = \frac{f_3(z_j)}{\gamma_2}~\mbox{for}~j \ge N_2.$$

So $g_3$ can interpolate sequences satisfying 

\begin{enumerate}
\item[a$^\prime$)] $|a_j| \le \frac{1}{\gamma_1 \gamma_2 C(\delta)}$ for $j < N_1$; \\
\item[b$^\prime$)] $|a_j| \le \frac{1}{\gamma_1 \gamma_2 (1 + \varepsilon_1)}$ for $N_1 \le j < N_2$;\\
\item[c$^\prime$)] $|a_j| \le \frac{1}{\gamma_2(1 + \varepsilon_2)}$ for $j \ge N_2$.
\end{enumerate} 

%\color{red} This will come out later, unless you think it should stay in: Stage 4. Choose $G_3, F_3$ with $|F_3| + |G_3| \le \gamma_3$ and consider
%$$g_4: = (g_3 G_3 + f_4 F_3)/\gamma_3 = (f_1/\gamma_1\gamma_2 \gamma_3) G_1G_2 G_3 +  f_2/(\gamma_1\gamma_2 \gamma_3) F_1 G_2 G_3+ (f_3/\gamma_2\gamma_3) F_2 G_3 + (f_4/\gamma_3) F_3.$$
%
%Then
%\begin{enumerate}
%\item $g_4(z_j) = f_1(z_j)/(\gamma_1\gamma_2 \gamma_3)$ for $j < N_1$, $g_4(z_j) = f_2(z_j)/(\gamma_1\gamma_2 \gamma_3)$ for $N_1 \le j < N_2$, $g_4(z_j) = f_3(z_j)/\gamma_2\gamma_3$ for $N_2 \le j < N_3$ and $g_4(z_j) = f_4(z_j)/\gamma_3$;
%\item $\|g_4\|_\infty \le 1$.
%\end{enumerate} \color{black}
\bigskip
We repeat this process arriving at Stage $n$.

\bigskip

{\bf Stage $n$.} Consider $\varepsilon_j$, $N_j$, the corresponding $\frac{f_j}{1 + \varepsilon_{j - 1}}$ obtained from the $EIS_\infty$ assumption that interpolate $z_j$ to $a_j$ for $N_{k - 1} \le j < N_k$ if $|a_j| \le \frac{1}{1 + \varepsilon_{j - 1}}$. Construct, for each j, the functions $F_j := F_{N_j}$ and $G_j := G_{N_j}$  so that $$|F_j| + |G_j| <  \gamma_j.$$ Finally, define
\begin{eqnarray*}
g_n & = &  \left(\frac{f_1}{\prod_{j = 1}^{n - 1} \gamma_j}\right) G_1 \cdots G_{n - 1} + \left(\frac{f_2}{\prod_{j = 1}^{n - 1}\gamma_j}\right) F_1 G_2 \cdots G_{n - 1} + \\
& + & \left(\frac{f_3}{\prod_{j = 2}^{n - 1} \gamma_j}\right) F_2 G_3 \cdots G_{n - 1} + \cdots + \left(\frac{f_{n - 1}}{\gamma_{n - 2}\gamma_{n -1}}\right) F_{n - 2} G_{n - 1} + \left(\frac{f_{n}}{\gamma_{n - 1}}\right) F_{n - 1}\end{eqnarray*} so that (taking $N_0 = 0$)
\begin{enumerate}
\item $g_n(z_j) = \frac{f_1(z_j)}{\prod_{j = 1}^{n - 1} \gamma_j}$ for $j < N_1$;
\item $g_n(z_j) = \frac{f_k(z_j)}{\prod_{j = k-1}^{n - 1} \gamma_j}~\mbox{for}~N_{k-1} \le j < N_k$ where  $1 < k \le n - 1$;
\item $g_n(z_j) =  f_{n}(z_j)$ for $j \ge N_{n - 1}$;
\item $\|g_n\|_\infty \le 1$.
\end{enumerate}
Since for each $k>1$, the function $\frac{f_k}{\prod_{j = k - 1}^{n - 1}\gamma_j}$ interpolates the set 
$$
\left\{\{a_j\}: |a_j| \le \frac{1}{\left(\prod_{l = k - 1}^\infty \gamma_l\right) (1 + \varepsilon_{k - 1})}, N_{k - 1} \le j < N_k\right\},
$$ 
$\frac{f_1}{\prod_{j = 1}^{n - 1} \gamma_k}$ interpolates sequences $|a_j| \le \frac{1}{C(\delta)\prod_{j = 1}^{n - 1} \gamma_j}$ for $j < N_1$, and  $$\frac{1}{\left(\prod_{l = k - 1}^\infty \gamma_l\right) (1 + \varepsilon_{k - 1})} \to 1$$ as $k \to \infty$, so we see that a normal families argument now implies the result.
\end{proof}

\begin{rem} It is also possible to use Theorem~\ref{thm:DN} to obtain a proof of the existence of the P. Beurling functions that we use above. \end{rem}

\section{New Proofs of Old Results on Asymptotic Interpolation}
\label{asiinfty}

We now return to the asymptotic interpolation sequences that we discussed in the introduction to this paper. Asymptotic interpolation sequences were studied in \cite{HKZ} and, later, in \cite{GM}. The Jones construction provides a constructive proof that thin sequences are asymptotically interpolating, but it does not show that the sequence is an $AIS$ of type $1$. A simpler proof was provided by Dyakonov and Nicolau, \cite{DN}.  We now turn to a simpler proof of the converse; i.e. that an asymptotic interpolating sequence of type $1$ is a thin sequence.  The argument in \cite{GM} (in both directions) relied heavily on deep maximal ideal space techniques and T. Wolff's work appearing in \cite{W} and \cite{WT}. For our new proof, we use the fact that a thin sequence is eventually interpolating, a fact that follows easily from \cite{CG} (see also Theorem 4.1 in \cite{Garnett}), stated below, and the fact that the sequence must be eventually discrete, found in Lemma 4.1 in \cite{GM}. For general uniform algebras, this is proved in Theorem 1.6 in \cite{GM}.

\begin{thm}[\cite{CG}]
\label{CG} Let $Z = \{z_j\}$ be a sequence in the upper half plane. Then $Z$ is an interpolating sequence if disjoint subsets of $\{z_j\}$ have disjoint closures in $M(H^\infty)$. \end{thm}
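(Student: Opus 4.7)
The plan is to prove the contrapositive: assuming $Z=\{z_j\}$ fails Carleson's condition \eqref{Interp_Cond}, I will produce two disjoint subsequences of $Z$ whose closures in $M(H^\infty)$ meet, contradicting the hypothesis. Working in the disc (equivalent to the upper half plane via the Cayley transform), the failure of interpolation is equivalent to $\delta := \inf_j \delta_j = 0$. Since $\delta_j = \prod_{k \ne j} \rho(z_j, z_k)$, where $\rho$ denotes the pseudohyperbolic metric, any subsequence $\{z_{j_k}\}$ with $\delta_{j_k} \to 0$ must admit, for each $k$, an index $m_k \ne j_k$ with $\rho(z_{j_k}, z_{m_k})$ small; a diagonal extraction then forces $\rho(z_{j_k}, z_{m_k}) \to 0$.

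First, I would perform a greedy selection of the pairs $(j_k, m_k)$, at each stage discarding indices that have already been used, in order to arrange that the two index sets $J = \{j_k\}$ and $M = \{m_k\}$ are disjoint while preserving $\rho(z_{j_k}, z_{m_k}) \to 0$. Setting $A = \{z_j : j \in J\}$ and $B = \{z_j : j \in M\}$ (after passing to a subset so the $z_j$ are distinct) gives two disjoint subsets of $Z$. Then, by compactness of $M(H^\infty)$, the net $(z_{j_k})$ has a subnet converging to some $x \in \overline{A}$; refining along the same indices, $(z_{m_k})$ has a subnet converging to some $y \in \overline{B}$. For any $\phi \in H^\infty$ with $\|\phi\|_\infty \le 1$, Schwarz--Pick yields
$$
\rho\bigl(\phi(z_{j_k}), \phi(z_{m_k})\bigr) \le \rho(z_{j_k}, z_{m_k}) \to 0,
$$
so $|\phi(z_{j_k}) - \phi(z_{m_k})| \to 0$. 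Passing to the subnet shows $\phi(x) = \phi(y)$ for every $\phi \in H^\infty$, hence $x = y$ as elements of $M(H^\infty)$. This common Gelfand-topology limit therefore lies in $\overline{A} \cap \overline{B}$, contradicting the hypothesis.

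The main obstacle will be Step~2, the bookkeeping needed to ensure disjointness of $J$ and $M$ without losing the proximity property. Pathological configurations --- for instance, a single point with infinitely many near neighbors all drawn from a small pool of other indices --- could obstruct a naive greedy extraction. This is handled by either a Ramsey-type argument on the graph whose edges are the $\rho$-close pairs, or a careful inductive selection that skips collisions while keeping a summable deficit in $\rho$. Once this extraction is secure, the Schwarz--Pick argument is automatic: the whole proof avoids deep machinery about support sets or $QC$-level decompositions of $M(H^\infty)$, and uses only compactness, the Gelfand topology, and the contractive property of $H^\infty$ self-maps in the pseudohyperbolic metric.
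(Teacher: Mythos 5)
The paper does not prove Theorem~\ref{CG}; it is cited directly from Carleson and Garnett~\cite{CG} and then invoked as a black box, so the comparison must be against their argument rather than anything in this manuscript. With that said, your proposed proof contains a genuine gap at the very first step, and the gap is fatal rather than a bookkeeping issue. You claim that $\delta_{j_k}\to 0$ forces, after extraction, the existence of indices $m_k\ne j_k$ with $\rho(z_{j_k},z_{m_k})\to 0$. That inference is false. The quantity $\delta_j=\prod_{k\ne j}\rho(z_j,z_k)$ can tend to $0$ even when every individual factor is bounded away from $0$: it is enough to have a growing number of factors each uniformly less than, say, $3/4$. Carleson's condition \eqref{Interp_Cond} is equivalent to \emph{two} things holding simultaneously, namely uniform pseudohyperbolic separation $\inf_{j\ne k}\rho(z_j,z_k)>0$ together with the Carleson measure condition for $\sum_j(1-\lvert z_j\rvert^2)\delta_{z_j}$; a Blaschke sequence can easily satisfy the first while failing the second (for instance, placing roughly $2^{k/2}$ uniformly separated points on each circle $\lvert z\rvert=1-2^{-k}$ clustered near a single boundary point gives a separated Blaschke sequence with unbounded Carleson box averages, hence $\inf_j\delta_j=0$ while $\inf_{j\ne k}\rho(z_j,z_k)>0$).

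For such a sequence your extraction never produces pseudohyperbolically close pairs, the Schwarz--Pick step then gives nothing, and you cannot conclude that the two subsets have intersecting closures by this route. Nonvanishing pseudohyperbolic distance between points in $\D$ does not prevent their cluster sets in $M(H^\infty)$ from meeting; that intersection is governed by subtler phenomena (fibers, support sets, and the Carleson box geometry), not by proximity in $\D$. The actual Carleson--Garnett argument has to exploit the failure of the Carleson measure condition directly, essentially by packing too much mass of the sequence into a nested family of Carleson boxes and showing two interleaved subsequences must share a cluster point, and this is exactly the case your proof never touches. In short, your argument correctly handles the failure of separation but is silent on the essential case, so the proof does not go through. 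The proposed ``greedy extraction'' or ``Ramsey-type'' repair in your last paragraph addresses only the combinatorics of disjointness, not the missing analytic step, and so does not close the gap.
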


Since we will assume that $\{z_j\}$ is asymptotically interpolating, the sequence must be eventually distinct, \cite{GM}. So supposing that no points of the sequence are repeated, choose disjoint subsets $X$ and $Y$ of $Z$. Then there exists $f \in H^\infty$ with $f(a_j) \to 0$ for $a_j \in X$ and $f(b_j) \to 1$ for $b_j \in Y$. It follows that $X$ and $Y$ must have disjoint closures.

As in Sundberg and Wolff \cite{SW}, we view the theorem below as turning approximate interpolation into exact interpolation. However, in our proof that every asymptotic interpolating sequence is thin, we will use the fact that if $f \in H^\infty$ has the property that $f(z_n) \to 0$ where $\{z_n\}$ is interpolating, then $\overline{B} f \in H^\infty + C$, where $B$ is the corresponding interpolating Blaschke product. This last result can be proved using the fact that the zero sequence of $B$ is an interpolating sequence and therefore the map $T: H^\infty \to \ell^\infty$ defined by $T(f) = \{f(z_n)\}$ is surjective (see the work of the first author in \cite{AG} or \cite{GIS}), but it's interesting to note that the functions in Jones's proof show how to change the approximate condition $f(z_n) \to 0$ to an exact condition $f(z_n) = 0$, and consequently can be also used to obtain this result:

\begin{thm}[\cite{AG}, \cite{GIS}] \label{ag} Let $\{z_n\}$ be an interpolating sequence for $H^\infty,$  $f \in H^\infty$ and $\{a_n\}$ a sequence with $|f(z_n) - a_n| \to 0$. If $B$ denotes the interpolating Blaschke product corresponding to $\{z_n\}$, then there exists a function $h \in H^\infty$ with $\overline{ B} h \in H^\infty + C$
such that $(f - h)(z_n) = a_n$. Consequently, if $f(z_n) \to 0$, then $\overline{B} f \in H^\infty + C$. \end{thm}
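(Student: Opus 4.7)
The plan is to use Peter Jones's explicit functions from Lemma \ref{estimates} to convert the approximate condition $|f(z_n) - a_n| \to 0$ into an exact interpolation condition, and then to exploit the structure of those functions to show that $\overline{B} h \in H^\infty + C$. Set $c_n := f(z_n) - a_n$, so $c_n \to 0$. Let $g_n$ be the functions from Lemma \ref{estimates}; one checks directly from the formula that $g_n(z_n) = 1$ and $g_n(z_k) = 0$ for $k \neq n$, and the argument underlying Theorem \ref{JonesInterp_asymp} delivers the uniform bound $\sum_n |g_n(z)| \le C(\delta)$ for every $z \in \mathbb{D}$. Since $c_n \to 0$, the partial sums of $\sum_n c_n g_n$ are Cauchy in $H^\infty$, so the series converges to some $h \in H^\infty$ with $h(z_n) = c_n$, giving $(f-h)(z_n) = a_n$ as required.

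The main step is to show that $\overline{B} h \in H^\infty + C$. Multiplication by $\overline{B}$ is bounded on $L^\infty$, so $\sum_n c_n \overline{B} g_n$ converges to $\overline{B} h$ in $L^\infty$; since $H^\infty + C$ is closed in $L^\infty$, it is enough to prove $\overline{B} g_n \in H^\infty + C$ for each $n$. I factor $B = b_n B_n$, where $b_n(z) = \frac{-\overline{z_n}}{|z_n|}\frac{z - z_n}{1 - \overline{z_n}z}$ is the single Blaschke factor vanishing at $z_n$ and $B_n$ is the Blaschke product on the remaining zeros. From the formula in Lemma \ref{estimates} the quotient $g_n / B_n$ lies in $H^\infty$. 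Using $|B_n| = 1$ a.e.\ on $\mathbb{T}$,
\[
\overline{B}\, g_n \;=\; \overline{b_n}\cdot \overline{B_n}\cdot g_n \;=\; \overline{b_n}\cdot \frac{g_n}{B_n} \qquad \text{a.e.\ on }\mathbb{T}.
\]
Because $z_n \in \mathbb{D}$, the single factor $b_n$ is continuous and nonvanishing on $\overline{\mathbb{D}}$, so $\overline{b_n} \in C(\mathbb{T})$; hence $\overline{b_n} \cdot (g_n / B_n) \in H^\infty + C$, since that algebra contains both $C$ and $H^\infty$. This step is the crux of the proof: the approximate vanishing allows us to absorb the whole infinite inner factor $\overline{B_n}$ into $H^\infty$, leaving only the conjugate of a single Blaschke factor, which is continuous on the circle.

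For the \emph{Consequently} statement, apply the first part with $a_n = 0$ to obtain $h \in H^\infty$ with $\overline{B} h \in H^\infty + C$ and $(f - h)(z_n) = 0$ for every $n$. Since the zeros of $B$ are exactly $\{z_n\}$, Blaschke factorization gives $k := (f - h)/B \in H^\infty$, and using $|B| = 1$ a.e.\ on $\mathbb{T}$ we get $\overline{B}(f - h) = k$ on $\mathbb{T}$. Therefore $\overline{B} f = \overline{B} h + k \in H^\infty + C$, completing the proof.
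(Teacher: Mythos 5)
Your proof is correct and follows essentially the same route as the paper: both use the Jones functions from Lemma~\ref{estimates}, both exploit the factorization $\overline{B}g_n = \overline{b_n}\cdot(g_n/B_n)$ with $\overline{b_n}\in C(\T)$ and $g_n/B_n\in H^\infty$, and both finish by invoking the closedness of $H^\infty+C$ in $L^\infty$. The only difference is bookkeeping: the paper groups the terms into dyadic blocks $h_j$ with $\|h_j\|_\infty\lesssim 2^{-j}$ to force geometric convergence, whereas you observe directly that $c_n\to 0$ together with $\sum_n|g_n(z)|\le C(\delta)$ already makes the tails of $\sum_n c_n g_n$ (and hence of $\sum_n c_n\overline{B}g_n$) go to zero uniformly, which is a mild simplification.
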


\begin{proof}  We write $$g_j(z) = \left(\frac{1-\abs{z_j}^2}{1-\overline{z_j}z}\right)^2\ \exp\left(-\sum_{\abs{z_m}\geq\abs{z_j}}\left(\frac{1+\overline{z_m}z}{1-\overline{z_m}z}-\frac{1+\overline{z_m}z_j}{1-\overline{z_m}z_j}\right)(1-\abs{z_m}^2)\right).$$ Precisely the same argument as that used to establish the estimates in Theorem~\ref{JonesInterp_asymp} show that $\sum_{j = 1}^\infty |g_j(z)| \le C(\delta)$ for all $z \in \mathbb{D}$.
Let 
$$h_j(z) = \sum_{n = N_j + 1}^{N_{j + 1}} (f(z_n) - a_n) \frac{B_n(z)}{B_n(z_n)} g_n(z)$$ 
where $|f(z_k) - a_k| < \frac{1}{2^j}$ for $N_j < k \le N_{j + 1}$. 
Note that $h_j(z_k) = 0$ if $k \le N_j$ or $k > N_{j + 1}$ and $h_j(z_k) = f(z_k) - a_k$ for $k$ satisfying $N_j < k \le N_{j + 1}$.

Now
$$|h_j(z)| \le\frac{1}{2^j \delta}\sum_{n = N_j + 1}^{N_{j + 1}} |g_n(z)|.$$ By Lemma~\ref{estimates}, we see that $\|h_j\|_\infty < \frac{1}{2^j\delta} C(\delta)$, where $\delta:=\delta(B)$ is the interpolating constant of $B$ and $C(\delta)$ is a constant depending only on $\delta$.  Let $h = \sum_{j = 1}^\infty h_j$. Then $h \in H^\infty$ and $(f - h)(z_j) = a_j$ for all $j$.\\

To show that $\overline{B} h \in H^\infty + C$, note that on the unit circle we have
$$h_j(z) = \sum_{n = N_j + 1}^{N_{j + 1}} (f(z_n) - a_n) \frac{B_n(z)}{B_n(z_n)} g_n(z) = \sum_{n = N_j + 1}^{N_{j + 1}} (f(z_n) - a_n) \frac{B(z)}{B_n(z_n)} \left(\overline{\left({\frac{z - z_n}{1 - \overline{z_n} z}}\right)} g_n(z)\right).$$ Since $\overline{\frac{z - z_n}{1 - \overline{z_n} z}} g_n(z) \in H^\infty + C$ we have $h_j  = B k_j$ for some $k_j \in H^\infty + C$. 
Further $\|k_j\|_\infty = \|h_j\|_\infty < \frac{1}{2^j \delta}C(\delta)$ implies that $\overline{B} h = \sum_{j = 1}^\infty k_j \in H^\infty + C$. 

Finally, applying this to the case $f(z_n) \to 0$ and $ a_n =0$ for all $n$, we obtain that $f-h= Bg $ for some $g \in H^\infty$ and consequently $\overline{B} f = g + \overline{B} h \in H^\infty + C$.

\end{proof}

This allows us to show how the Jones construction can be used to prove a result in \cite{W}. Our proof will also use the Chang-Marshall Theorem (\cite{C}, \cite{M}) and the fact that an $H^\infty$ function is in $QA$ if and only if it is constant on every $QC$-level set, however our analysis will give some indication of where the zero sequence $\{z_j\}$ in the hypothesis lies.  At the time Wolff wrote his thesis, there was a simpler uniform algebra proof available, but Wolff's lemma (Lemma 1.2 of \cite{W}), Jones's construction and Theorem~\ref{ag} have simplified it further still.

\begin{cor}[Wolff, \cite{W}]\hfill
\label{multiply}
\begin{enumerate}
\item[(a)] Let $\{z_j\}$ be an interpolating sequence with corresponding Blaschke product $b$. If $q \in QA$ satisfies $q(z_j) \to 0$, then $qb \in QA$. In fact, $q \overline{b}^n \in QC$ and $q b^n \in QA$ for every $n \in \N$.\\
\item[b)] Let $f \in L^\infty$. Then there exists a non-zero $q \in QA$ such that $qf \in QC$.
\end{enumerate}
\end{cor}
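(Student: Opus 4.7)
My plan is to derive part (a) from Theorem~\ref{ag} together with the Bishop characterization of $QC$. Theorem~\ref{ag} applied to $q\in H^\infty$ with $q(z_j)\to 0$ yields $q\bar b\in H^\infty+C$. Since $\bar q\in QC\subset H^\infty+C$, $b\in H^\infty\subset H^\infty+C$, and $H^\infty+C$ is an algebra, the conjugate $\overline{q\bar b}=\bar q\, b$ also lies in $H^\infty+C$; hence $q\bar b\in QC$. To extend to all $n$, fix a $QC$-level set $E_x\subset M(L^\infty)$ and let $c_x$ be the (constant) value of $q\bar b$ there. Since $b$ is inner, $|\bar b|\equiv 1$ on $M(L^\infty)$, so $\bar b$ never vanishes on $E_x$. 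If $c_x=0$, then $q\equiv 0$ on $E_x$. If $c_x\neq 0$, then $q|_{E_x}$ is a nonzero constant $a_x$ (as $q\in QA$), which forces $\bar b|_{E_x}=c_x/a_x$ to be constant as well. Either way $qb^n|_{E_x}$ and $q\bar b^n|_{E_x}$ are constant, so Bishop's theorem gives $q\bar b^n\in QC$ and $qb^n\in QC\cap H^\infty=QA$ for every $n\in\N$. Note also that this already provides the promised information on where the zero sequence $\{z_j\}$ lives: each $E_x$ on which $b$ is non-constant is accounted for by the vanishing of $q$.

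\textbf{Part (b).} Here the Chang--Marshall theorem enters. Given $f\in L^\infty$, Chang--Marshall describes the smallest Douglas algebra containing $\bar f$ as $H^\infty[\bar u_\alpha:\alpha\in A]$ for a family of inner functions $u_\alpha$; after standard factorization these may be chosen to be interpolating Blaschke products $b_\alpha$ whose zero sequences $\{z_j^\alpha\}$ cluster precisely at those $QC$-level sets $E_x$ on which $f$ fails to be constant. For each $\alpha$, part (a) furnishes nonzero elements of $QA$ arising as products of a $QA$ function tending to zero along $\{z_j^\alpha\}$ with suitable powers of $b_\alpha$. A careful (typically countable) combination of these factors yields a single nonzero $q\in QA$ that vanishes on every $QC$-level set where $f$ is non-constant. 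On the complementary level sets both $q|_{E_x}$ and $f|_{E_x}$ are constant, hence so is $(qf)|_{E_x}$. Bishop's theorem then gives $qf\in QC$.

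\textbf{Main obstacle.} Part (a) is essentially a direct consequence of Theorem~\ref{ag} and Bishop's characterization. The real work is in part (b): producing a single \emph{nonzero} $q\in QA$ that simultaneously annihilates all of the $QC$-level sets on which $f$ is non-constant. Chang--Marshall identifies which Blaschke products carry the information, and part (a) shows how to multiply each one into $QA$, but controlling the combined (possibly infinite) product so that $q\not\equiv 0$ while still killing every discontinuity of $f$ is the technical heart of Wolff's original argument.
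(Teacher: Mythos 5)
Your argument for part (a) is correct and is essentially the paper's argument: use Theorem~\ref{ag} to get $q\bar b\in H^\infty+C$, observe that $\bar q b\in H^\infty+C$ as well so $q\bar b\in QC$, then apply Bishop's characterization level set by level set, splitting on whether $b|_{E_x}$ is constant or $q|_{E_x}=0$. Nothing to change there.

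Part (b), however, has a genuine gap, and you have correctly identified where it is but have not closed it. Applying Chang--Marshall to $H^\infty[\bar f]$ directly produces a family $\{b_\alpha\}$ of interpolating Blaschke products that may well be infinite (even uncountable), and you then need to manufacture \emph{one} nonzero $q\in QA$ vanishing on every level set where any $b_\alpha$ is non-constant. Saying that ``a careful (typically countable) combination'' yields this $q$ is exactly the claim that has to be proved; as written, nothing controls the product so that it remains nonzero, nor is it clear the family can be taken countable. You also do not say how to produce, for even a single $b_\alpha$, a $QA$ function tending to zero along its zero sequence --- that is Wolff's Lemma 1.2 in \cite{W}, which the paper invokes explicitly and which you never cite.

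The paper sidesteps the combination problem entirely by interposing Axler's factorization theorem \cite{A1}: given $f\in L^\infty$, there is a \emph{single} Blaschke product $B_0$ with $B_0 f\in H^\infty+C$. One then handles a \emph{single} Blaschke product via Chang--Marshall (giving $H^\infty[\bar B]=H^\infty[\bar b]$ for a \emph{single} interpolating $b$) and Wolff's Lemma 1.2 (giving a single outer $q\in QA$ vanishing asymptotically on the zeros of $b$), and bootstraps from Blaschke products to $H^\infty+C$ to $L^\infty$ in three short steps, using Shilov's theorem on each $QC$-level set along the way. So the right fix for your proposal is not to try to combine infinitely many multipliers, but to insert the Axler reduction first so that only one interpolating Blaschke product --- and hence only one $q$ from Wolff's Lemma --- is ever needed.
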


We show that (b) is true in stages. First, we prove it for the conjugate of a Blaschke product, then for functions in $H^\infty + C$, and finally for functions in $L^\infty$. We will need two auxiliary results. First, as a consequence of the Chang-Marshall construction (\cite{C}, \cite{M}), we know that given any Blaschke product $B$, there is an interpolating Blaschke product $b$ such that the (closures of) the algebras generated by $H^\infty$ and the conjugates of the Blaschke products coincide; i.e. 
$$H^\infty[\overline{B}] = H^\infty[\overline{b}].$$ Note that for a $QC$-level set $E_x$ we know that $H^\infty + C|E_x = H^\infty|E_x$ is closed, and by Shilov's theorem, found on page 60 in \cite{Gamelin}, we know that if $f \in L^\infty$ and $f|E_x \in H^\infty|E_x$ for every $QC$-level set $E_x$, then $f \in H^\infty + C$. We turn to the proof of Corollary~\ref{multiply}.

\begin{proof} We begin with the proof of (a). From the above, we know that $q \overline{b} \in H^\infty + C$. Since $H^\infty + C$ is an algebra, $\overline{q} b \in H^\infty + C$. So $q \overline{b} \in QC$. Since $q$ is constant on each $QC$-level set, if $b$ is not constant we must have $q = 0$. If $b$ is constant, then $qb$ is constant as well. Therefore, $qb$ is constant on every level set and $qb \in QA$. Since $b^n$ is constant whenever $b$ is, we also see that $q \overline{b}^n \in QC$ and $q b^n \in QA$ for every $n$.

For (b), consider a Blaschke product $B$.  Now, by the Chang-Marshall theorem, $H^\infty[\overline{B}] = H^\infty[\overline{b}]$ for some interpolating Blaschke product $b$. Let $\{z_n\}$ denote the zero sequence of $b$ and use Wolff's Lemma 1.2 (\cite{W}) and part a):  If $\{z_n\}$ is a Blaschke sequence, there is an outer function $q \in QA$ such that $q(z_n) \to 0$. From part (a), since $q \in QA$ tends to zero on the zero sequence of $b$, we know $q b^n \in QA$ and $q|E_x = 0$ on any $QC$-level set on which $b$ is not constant. Suppose $b|E_x$ is constant. There exist $h_j \in H^\infty$ such that $\overline{B}|E_x = \sum_j h_j \overline{b}^j |E_x \in H^\infty|E_x$ for each $x$, since $H^\infty|E_x$ is closed and we assume $b$ is constant on $E_x$. Therefore $\overline{q} \overline{B}|E_x \in H^\infty|E_x$ if $b|E_x$ is constant. On the other hand, if $b|E_x$ is not constant, $\overline{q}|E_x = 0$ and we also have $\overline{q}\overline{B}|E_x \in H^\infty|E_x$. Thus, $\overline{qB}\in H^\infty +C$ and, of course, $qB \in H^\infty$ implies $qB \in QA$. 

To obtain the result for $f \in L^\infty$, we apply a result of Axler \cite{A1} that says that given $f \in L^\infty$, there exists a Blaschke product $B$ such that $B f \in H^\infty + C$.  Suppose that $g \in H^\infty + C$. By Axler's result, we can find a Blaschke product $B$ with $B \overline{g} \in H^\infty + C$, and from our work above we can find $q \in QA$ with $qB \in QA$.  Therefore $q B \overline{g} = q(B \overline{g}) \in H^\infty + C$ and $\overline{q B \overline{g}} = \overline{qB} g \in H^\infty + C$. So $\overline{qB} g  \in QC$ and we can multiply the conjugate of any $H^\infty + C$ function into $QC$. 

If we have $f \in L^\infty$, we can multiply by a Blaschke product $B_0$ and we have $B_0 f \in H^\infty + C$. We can now multiply the conjugate of this function into $QC$, so we have $q_0 \overline{B_0 f}$ and $\overline{q_0}(B_0 f)$ in $QC$. If $B_0 f$ is not constant on a level set $q_0$ must be zero, and if $B_0 f$ is a non-zero constant $q_0$ must be as well. Therefore $q_0 B_0 f \in QC$.  If $q_0 B_0 \notin QA$, we may multiply it into $QA$ with another $QA$ function $q_1$. Therefore
$q_1 q_0 B_0$ multiplies $f$ into $QC$.
\end{proof}

Note that our $q$ is obtained by taking $B$, finding an interpolating Blaschke product corresponding to it and requiring that $q$ tends to zero on the zeros of the interpolating Blaschke product.

Recall that a sequence is an asymptotic interpolation  sequence of type $1$ for $H^\infty$,
 if whenever $a \in \ell^\infty$, there exists $g \in H^\infty$ such that $|g(z_j) - a_j| \to 0$ and $\|g\|_\infty \le \|a\|_{\ell^\infty}$.
\begin{thm}[\cite{GM}]
\label{thm:asiinfty} A sequence $\{z_n\}$ of distinct points is an asymptotic interpolating sequence of type $1$ for $H^\infty$ if and only if $\{z_n\}$ is a thin interpolating sequence. \end{thm}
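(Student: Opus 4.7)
My plan is to establish the two implications separately, handling the easy direction via Theorem~\ref{thm:DN} and the harder one by combining Theorem~\ref{CG} with Theorem~\ref{ag}.

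For the easy direction, thin $\Rightarrow$ AIS of type~$1$, I would invoke Theorem~\ref{thm:DN}. By homogeneity we may assume $\|a\|_\infty \le 1$. Thinness supplies $m_j \in (0,1)$ with $m_j \to 1$ such that any $(w_j)$ satisfying $|w_j|\le m_j$ is interpolated exactly by some $F \in H^\infty$ with $\|F\|_\infty\le 1$. Choose $w_j = m_j a_j$, which satisfies $|w_j|\le m_j$; then the resulting $F$ has $F(z_j) = m_j a_j$ and $\|F\|_\infty \le 1$, while $|F(z_j)-a_j| = (1-m_j)|a_j| \to 0$. After scaling back by $\|a\|_\infty$, this is exactly the AIS-of-type-$1$ condition.

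For the converse, I would begin by showing that $\{z_n\}$ is eventually interpolating. The AIS hypothesis forces $\{z_n\}$ to be eventually distinct (Lemma~4.1 of \cite{GM}), so I may assume all points distinct. Given any pair of disjoint infinite $X,Y\subseteq\{z_n\}$, define $a \in \ell^\infty$ to equal $0$ on $X$ and $1$ on $Y$; AIS of type~$1$ yields an $f\in H^\infty$ with $\|f\|_\infty\le 1$, $f\to 0$ along $X$, and $f\to 1$ along $Y$. This separates the closures of $X$ and $Y$ in $M(H^\infty)$, so by Theorem~\ref{CG} the sequence is interpolating. Passing to a tail, I assume from now on that $\{z_n\}$ is an interpolating sequence with Blaschke product $B$.

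To conclude that $\{z_n\}$ is thin, I would use Theorem~\ref{ag} to promote AIS approximations into exact interpolants of near-optimal norm, verifying the $EIS_\infty$ condition. Given $a$ with $\|a\|_\infty\le 1$, AIS produces $f \in H^\infty$ with $\|f\|_\infty\le 1$ and $f(z_n)-a_n\to 0$. Theorem~\ref{ag} then yields $h \in H^\infty$ with $(f-h)(z_n)=a_n$ for every $n$ and $\overline{B}h\in H^\infty+C$. A look at the construction shows that $h$ is assembled as a sum $\sum_j h_j$ with $\|h_j\|_\infty$ decaying geometrically like $2^{-j}/\delta$ once the indexing is calibrated against the decay of $|f(z_n)-a_n|$; this forces $\|h\|_\infty$ to be controlled by the tail oscillation $\sup_{n \ge N}|f(z_n)-a_n|$, which vanishes as $N\to\infty$. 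Thus on a sufficiently long tail $f-h$ is an exact interpolant of $a$ of norm at most $1+\varepsilon$, which is the $EIS_\infty$ property; thinness follows from Corollary~\ref{cor:eip}.

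The main obstacle is the quantitative step at the end. The crude form of Theorem~\ref{ag} bounds $\|h\|_\infty$ by $\sup_n |f(z_n)-a_n|$, which can be as large as $2\|a\|_\infty$ and is not a priori small. Moreover, $EIS_\infty$ demands that the tail index $N$ depend only on $\varepsilon$ and not on $a$; securing such uniformity likely requires a careful reworking of the Jones-style sum defining $h$ so that only the decay on a single uniform tail enters the norm estimate, possibly combined with iterating the AIS construction to produce a residue sequence whose $\ell^\infty$-norm decays geometrically.
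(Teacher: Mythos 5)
Your easy direction (thin $\Rightarrow$ AIS of type $1$) matches the paper, which likewise defers to Theorem~\ref{thm:DN}, and the normalization you perform is correct.

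Your converse contains a genuine gap. You propose to establish $EIS_\infty$ directly by taking the AIS approximant $f$ for a given $a$ and correcting it with Theorem~\ref{ag}. But $EIS_\infty$ requires a tail index $N$ depending only on $\varepsilon$, not on $a$, whereas AIS of type $1$ supplies no rate: for each $a$ it gives some $f$ with $f(z_n)-a_n\to 0$, yet nothing bounds $\sup_{n\ge N}|f(z_n)-a_n|$ uniformly in $a$ for any fixed $N$. Your proposed repair --- iterate the AIS step to force the residues to decay geometrically in $\ell^\infty$ --- does not close this hole: $r^{(1)}_n=a_n-f_0(z_n)$ tends to zero pointwise but $\|r^{(1)}\|_{\ell^\infty}$ can be close to $2\|a\|_{\ell^\infty}$, and the next AIS step only gives $\|f_1\|_\infty\le\|r^{(1)}\|_{\ell^\infty}$, so $\|r^{(k)}\|_{\ell^\infty}$ need not shrink at all. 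Contrast this with the iteration in Theorem~\ref{EISiffASI}, which converges precisely because the strong $AIS_p$ hypothesis includes a norm contraction $\|G(z_j)(1-|z_j|^2)^{1/p}-a_j\|_{N,\ell^p}<\varepsilon\|a\|_{N,\ell^p}$; AIS of type $1$ has no such clause.

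The paper sidesteps the uniformity problem altogether by arguing by contradiction and interpolating a single explicit target. Assuming $\{z_n\}$ (already known interpolating, exactly as you argued via Theorem~\ref{CG}) is not thin, one fixes a subsequence with $|B_{n_k}(z_{n_k})|\le 1-\varepsilon$, extracts from it a thin sub-subsequence $\{z_{n_{k_m}}\}$, and applies AIS of type $1$ with $w_n=1$ on the sub-subsequence and $w_n=0$ off it, getting $f\in H^\infty$, $\|f\|_\infty\le 1$, with $f(z_{n_{k_m}})\to 1$ and $f(z_j)\to 0$ otherwise. Theorem~\ref{ag} with target $0$ (i.e.\ the factorization $\overline{B_1}f\in H^\infty+C$, where $B_1$ is the complementary Blaschke product) plus asymptotic multiplicativity of $H^\infty+C$ on $\D$ then forces $|B_1(z_{n_{k_m}})|\to 1$; since the sub-subsequence was chosen thin, its own Blaschke factors also tend to $1$, so $|B_{n_{k_l}}(z_{n_{k_l}})|\to 1$, contradicting $\le 1-\varepsilon$. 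Because only one target sequence is ever interpolated, no uniformity over $a$ is needed. To salvage your outline you would have to switch to a contradiction framework of this kind, or first prove a quantitative strengthening of AIS of type $1$ that you do not currently possess.
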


\begin{proof} We have already mentioned that a simpler proof that thin implies asymptotic of type $1$ follows from Theorem~\ref{thm:DN} and can be found in \cite{DN}. We now turn to the converse.

By Theorem~\ref{CG} and the comments following it, we may assume that our sequence is eventually interpolating. Since the sequence is assumed to be distinct, it must be interpolating.  Suppose that it is not a thin sequence. Then there is a sequence $\{z_{n_k}\}$ and $\varepsilon > 0$ such that $$|B_{n_k}(z_{n_k})| \le 1 - \varepsilon;$$ that is, $$\prod_{j \ne n_k} \rho(z_{n_k}, z_j) \le 1 - \varepsilon.$$ Choose a thin subsequence of the $\{z_{n_k}\}$ and denote this sequence by $\{z_{n_{k_m}}\}$. Let $w_n = 0$ if $n \notin \{n_{k_l}\}$ and $w_n = 1$ otherwise. Then choose $f \in H^\infty$ with $\|f\|_\infty = 1$ and $$|f(z_m) - w_m| \to 0.$$ So there exists $M$ such that $$f(z_{n_{k_m}}) \approx 1, f(z_j) \approx 0~\mbox{if}~ j \notin \{n_{k_m}\}$$ for $j, n_{k_m} \ge M$.  In fact, $f(z_j) \to 0$ as $j \to \infty$ for $j \notin \{n_{k_m}\}$ and $f(z_{n_{k_m}}) \to 1$ as $m \to \infty$. 

Let $B_1$ denote the Blaschke product with zeros $\{z_j\}_{j \notin \{n_{k_m}\}}$. By Theorem~\ref{ag} there exists $g \in H^\infty + C$ such that $f = B_1 g$ and $\|g\|_\infty = 1$. Thus $B_1, g \in H^\infty + C$, and by \cite{Douglas}
$$\lim_{|z| \to 1}|(B_1 g)(z) - B_1(z) g(z)| = 0,$$
where we interpret the evaluation of functions in $H^\infty + C$ on $\D$ via the Poisson extension formula.

Since $f(z_{n_{k_m}}) \to 1$, we see that $$\lim_m |1 - B_1(z_{n_{k_m}})g(z_{n_{k_m}})| = \lim_m |f(z_{n_{k_m}}) - B_1(z_{n_{k_m}}) g(z_{n_{k_m}})| = 0.$$ Therefore, $|B_1(z_{n_{k_m}})| \to 1$ as $m \to \infty$. Returning to the original Blaschke product $B$ and splitting the product into the terms belonging and not belonging to the
subsequence $\{n_{k_m}\}$, we obtain for all $ l \in \N$:
$$|B_{n_{k_l}}(z_{n_{k_l}})| = \prod_{m \ne l} \rho(z_{n_{k_m}}, z_{n_{k_l}}) \prod_{j \notin \{n_{k_m}\}} \rho(z_{n_{k_l}}, z_j) =  \prod_{m \ne l} \rho(z_{n_{k_m}}, z_{n_{k_l}})  \cdot |B_1(z_{n_{k_l}})|.$$ 
Now the first factor tends to $1$ as $l \to \infty$, because we have chosen $\{z_{n_{k_m}}\}$ to be a thin sequence, and we have just seen that the second factor tends to $1$ as well. This contradicts our choice of $\{z_{n_{k_m}}\}$, so the sequence must be thin. \end{proof}

\section{Thin Sequences in \texorpdfstring{$H^p$}{Hardy Spaces}} %%%%%%%%%%%%%%%%%%%%%%%%%%

\label{asip}

In this section, we study $H^p$ equivalences for a sequence to be thin. Recall that for $z_0 \in \mathbb{D}$,  $k_{z_0}(z) = \frac{1}{(1 - \overline{z_0}z)}$ is the reproducing kernel for $H^2$ and $h_{z_0}(z) = \frac{\sqrt{1 - |z_0|^2}}{1 - \overline{z_0} z}$ is the normalized reproducing kernel.

\begin{defin}
Let $1 \le p \le \infty$. A sequence $\{z_n\}$ is an eventual $1$-interpolating sequence  for $H^p$ $(EIS_p)$ if the following holds: For every $\varepsilon > 0$ there exists $N$ such that for each $\{a_n\} \in \ell^p$ there exists $f_{N, a} \in H^p$ with
$$f_{N, a}(z_n) (1 - |z_n|^2)^{1/p} = a_n ~\mbox{for}~ n \ge N ~\mbox{and}~ \|f_{N, a}\|_p \le (1 + \varepsilon) \|a_n\|_{N, \ell^p}.$$
\end{defin} 

\begin{defin} Let $1 \le p \le \infty$. A sequence $\{z_j\}$ is a strong $AIS_p$-sequence if for all $\varepsilon > 0$ there exists $N$ such that for all sequences $\{a_j\} \in \ell^p$ there exists a function $G_{N, a} \in H^p$ such that $\|G_{N, a}\|_p \le \|a\|_{N,\ell^p}$ and 
$$\|G_{N, a}(z_j) (1 - |z_j|^2)^{1/p} - a_j\|_{N, \ell^p} < \varepsilon \|a_j\|_{N, \ell^p}.$$ \end{defin}

\begin{thm}\label{EISiffASI} 
Let $\{z_n\}$ be a sequence of  points in $\mathbb{D}$. Let $1 \le p \le \infty$. Then $\{z_n\}$ is an $EIS_p$ sequence if and only if $\{z_n\}$ is a strong-$AIS_p$.
\end{thm}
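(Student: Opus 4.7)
The plan is to treat the two directions in turn. Both amount to straightforward manipulations of the definitions; the forward direction $EIS_p \Rightarrow$ strong-$AIS_p$ is essentially just a rescaling, while the reverse is a Neumann-type iteration whose key requirement is a uniformity built into the definition.

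For the easy direction, given $\varepsilon > 0$, I would choose $\eta > 0$ with $\eta/(1+\eta) < \varepsilon$, apply the $EIS_p$ hypothesis with tolerance $\eta$ to obtain $N$, and for any $\{a_j\} \in \ell^p$ take the exact interpolant $f \in H^p$ with $f(z_n)(1-|z_n|^2)^{1/p} = a_n$ for $n \ge N$ and $\|f\|_p \le (1+\eta)\|a\|_{N,\ell^p}$. The function $G := f/(1+\eta)$ then satisfies $\|G\|_p \le \|a\|_{N,\ell^p}$, and $G(z_j)(1-|z_j|^2)^{1/p} - a_j = -a_j \, \eta/(1+\eta)$ for $j \ge N$, giving the required strong-$AIS_p$ approximation bound.

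For the harder direction, given $\varepsilon > 0$, I would fix $\eta \in (0,1)$ with $1/(1-\eta) < 1+\varepsilon$ and apply strong-$AIS_p$ with tolerance $\eta$ to obtain $N$. Given $\{a_j\} \in \ell^p$, set $a^{(0)}_j = a_j$ for $j \ge N$ and $0$ otherwise. Then iteratively, for each $k \ge 0$, apply strong-$AIS_p$ to $a^{(k)}$---using the \emph{same} $N$---to produce $G_k \in H^p$ with $\|G_k\|_p \le \|a^{(k)}\|_{\ell^p}$ and $\|G_k(z_j)(1-|z_j|^2)^{1/p} - a^{(k)}_j\|_{N,\ell^p} < \eta \|a^{(k)}\|_{\ell^p}$; then define $a^{(k+1)}_j = a^{(k)}_j - G_k(z_j)(1-|z_j|^2)^{1/p}$ for $j \ge N$ and $0$ otherwise, so that $\|a^{(k)}\|_{\ell^p} \le \eta^k \|a\|_{N,\ell^p}$. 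The series $f := \sum_{k=0}^\infty G_k$ converges absolutely in the Banach space $H^p$, with $\|f\|_p \le \|a\|_{N,\ell^p}/(1-\eta) < (1+\varepsilon)\|a\|_{N,\ell^p}$. Since point evaluation at any $z \in \mathbb{D}$ is continuous on $H^p$ (reproducing-kernel estimate for $1 \le p < \infty$, uniform estimate for $p = \infty$), we may exchange the sum with evaluation at $z_j$, and for $j \ge N$ the resulting series telescopes to $a_j$ because $\|a^{(k)}\|_{\ell^p} \to 0$.

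The main obstacle---really the only subtle point---is that in the iteration step one must reuse the same $N$ at every level. This is legitimate precisely because the definition of strong-$AIS_p$ requires $N$ to depend only on the tolerance $\eta$ and not on the input sequence. Without that uniformity, each $G_k$ would only interpolate on a progressively larger tail, and the telescoped series would fail to recover $a_j$ on a fixed tail.
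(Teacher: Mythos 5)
Your proof is correct and follows essentially the same route as the paper's: the forward direction is the same rescaling by $1/(1+\eta)$, and the reverse direction is the same Neumann-style iteration with a fixed $N$, telescoping the errors $a^{(k)}$ and summing the interpolants in $H^p$. The only cosmetic difference is that you separate the tolerance $\eta$ from the target $\varepsilon$ so that the final bound is cleanly $(1+\varepsilon)$ rather than $1/(1-\varepsilon)$, a detail the paper elides; and you explicitly flag that reusing the same $N$ at every stage is what makes the telescoping legitimate, which the paper uses implicitly.
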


\begin{proof} 
If a sequence is an $EIS_p$, then it is trivially $AIS_p$, for given $\varepsilon > 0$ we may take $G_{N, a} = \frac{f_{N, a}}{(1 + \varepsilon)}$. 

For the other direction, suppose $\{z_n\}$ is an $AIS_p$ sequence. Let $\varepsilon > 0$, $N := N(\varepsilon)$, and  $\{a_j\}:=\{a_{j}^{(0)}\}$ be any sequence. First choose $f_0$ so that 
$$\|(1 - |z_j|^2)^{1/p}f_0(z_j) - a_{j}^{(0)}\|_{N, \ell^p} < \varepsilon \|a\|_{N, \ell^p}$$ and $$\|f_0\|_p \le \|a\|_{N,\ell^p}.$$
Now let $a_{j}^{(1)} = a_{j}^{(0)} - (1 - |z_j|^2)^{1/p} f_0(z_j)$. Note that $\|a^{(1)}\|_{N, \ell^p} < \varepsilon \|a\|_{N, \ell^p}$. Since we have an $AIS_p$ sequence, we may choose $f_1$ such that
$$\|(1 - |z_j|^2)^{1/p} f_1(z_j) - a_{j}^{(1)}\|_{N, \ell^p} < \varepsilon \|a^{(1)}\|_{N, \ell^p} <  \varepsilon^2\|a\|_{N, \ell^p},$$
and $$\|f_1\|_p \le \|a^{(1)}\|_{N, \ell^p}<\varepsilon\|a\|_{N,\ell^p}.$$ In general, we let
$$a_{j}^{(k)} = -(1 - |z_j|^2)^{1/p}f_{k - 1}(z_j) + a_{j}^{(k-1)}$$ so that
$$\|a^{(k)}\|_{N, \ell^p} \le \varepsilon \|a^{(k - 1)}\|_{N, \ell^p} \le \varepsilon^2 \|a^{(k-2)}\|_{N, \ell^p} \le \cdots \le \varepsilon^k \|a\|_{N, \ell^p}$$ and 
$$\|f_k\|_p \le \|a^{(k)}\|_{N, \ell^p}<\varepsilon^k\|a\|_{N,\ell^p}.$$ 
Then consider $f(z) = \sum_{k = 0}^\infty f_k(z)$.
Since $f_k(z_j) = \left(a_{j}^{(k)} - a_{j}^{(k+1)}\right)(1 - |z_j|^2)^{-1/p}$ and $a_{j}^{(k)} \to 0$ as $k \to \infty$, we have $$f(z_j) = a_{j}^{(0)}(1 - |z_j|^2)^{-1/p} = a_j(1 - |z_j|^2)^{-1/p}.$$ Further $\|f\|_p \le \sum_{k = 0}^\infty \varepsilon^{k} \|a\|_{N, \ell^p} = \frac{1}{1 - \varepsilon} \|a\|_{N, \ell^p}$.
\end{proof}

We will now turn to characterization of thin sequences by means of Carleson measures.  For $z \in \mathbb{D}$, we let $I_z$ denote the interval in $\T$ with center $\frac{z}{|z|}$ and length $1 - |z|$. For an interval $I$ 
in $\T$, we let $$S_I = \left\{z \in \mathbb{D}: \frac{z}{|z|} \in I~\mbox{and}~ |z| \ge 1 - |I|\right\}.$$ For $A > 0$, the interval $AI$ denotes an interval with the same center as $I$ and length $A|I|$.
Given a positive measure $\mu$ on $\D$, let us denote the (possibly infinite) constant
$$
    \CC(\mu) =  \sup_{f \in H^2, f \neq 0} \frac{\|f\|^2_{L^2(\D, \mu)}}{\|f\|^2_2}
$$
as the Carleson embedding constant of $\mu$ on $H^2$ and
$$
    \RR(\mu) =  \sup_{z} \frac{\|k_z\|_{L^2(\D, \mu)}}{\|k_z\|_2}
$$
as the embedding constant of $\mu$ on the reproducing kernel of $H^2$.

The Carleson Embedding Theorem asserts that the constants are equivalent. In particular, there exists a constant $c$ such that
$$
   \RR(\mu) \le \CC(\mu) \le c \RR(\mu),
$$
with best known constant $c= 2e$ \cite{PTW}.

 We recall the following result  from \cite{SW},
for a generalized version, see \cite{CFT}. 
\begin{thm}[See either Sundberg, Wolff, Lemma 7.1 in \cite{SW}, or Chalendar, Fricain, Timotin, Proposition 4.2 in \cite{CFT}]
\label{Cmeasure} Suppose $Z = \{z_n\}$ is a sequence of distinct points. Then the following are equivalent:

\begin{enumerate} 
\item $Z$ is a thin interpolating sequence;
\item for any $A \ge 1$, $$\lim_{n \to \infty} \frac{1}{|I_{z_n}|} \sum_{k \ne n, z_k \in S(A I_n)} (1 - |z_k|) = 0.$$
\end{enumerate}
\end{thm}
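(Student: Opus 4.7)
The plan is to translate both conditions into statements about the single diagonal sum
\[
\sigma_n \;:=\; \sum_{k\ne n}\bigl(1-\rho(z_n,z_k)^2\bigr)\;=\;\sum_{k\ne n}\frac{(1-|z_n|^2)(1-|z_k|^2)}{|1-\overline{z_n}z_k|^2},
\]
and to begin by observing that $Z$ is a thin interpolating sequence if and only if $\sigma_n\to 0$. This comes from the elementary identity $\prod_k(1-x_k)\to 1\iff \sum_k x_k\to 0$ for $x_k\in[0,c]$, $c<1$, applied to $x_k=1-\rho(z_n,z_k)^2$: thin sequences are automatically pseudohyperbolically separated, and conversely any sequence with $\sigma_n\to 0$ is eventually separated and so, by Carleson's theorem, interpolating.

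For $(1)\Rightarrow(2)$ the argument is direct. When $z_k\in S(AI_{z_n})$, a standard geometric computation on Carleson boxes yields $|1-\overline{z_n}z_k|\le CA(1-|z_n|)$ for a universal $C$. Plugging this into the denominator of each summand of $\sigma_n$ produces
\[
\sigma_n\;\ge\;\frac{1}{C^2 A^2(1-|z_n|)}\sum_{k\ne n,\,z_k\in S(AI_{z_n})}(1-|z_k|),
\]
so that $\varepsilon_n(A):=|I_{z_n}|^{-1}\sum_{k\ne n,\,z_k\in S(AI_{z_n})}(1-|z_k|)\le C^2 A^2\sigma_n\to 0$ for every fixed $A\ge 1$.

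For $(2)\Rightarrow(1)$ I would use a dyadic annular decomposition around $z_n$: put $T_0=S(I_{z_n})$ and $T_m=S(2^m I_{z_n})\setminus S(2^{m-1}I_{z_n})$ for $m\ge 1$. On $T_m$ with $m\ge 1$ the standard estimate $|1-\overline{z_n}z_k|\gtrsim 2^m(1-|z_n|)$ gives
\[
\sum_{k\ne n,\,z_k\in T_m}\frac{(1-|z_n|^2)(1-|z_k|^2)}{|1-\overline{z_n}z_k|^2}\;\lesssim\;\frac{\varepsilon_n(2^m)}{4^m},
\]
while the trivial bound $|1-\overline{z_n}z_k|\ge 1-|z_n|$ on $T_0$ produces a contribution $\lesssim\varepsilon_n(1)$. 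Summing gives
\[
\sigma_n\;\lesssim\;\varepsilon_n(1)+\sum_{m\ge 1}\varepsilon_n(2^m)/4^m.
\]
Given $\eta>0$, any fixed $M$ lets us invoke $(2)$ at the finitely many scales $A=1,2,\dots,2^M$ to find $N$ with $\varepsilon_n(2^m)<\eta$ for all $m\le M$ and $n\ge N$, which controls the inner part of the sum.

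The main obstacle is the tail $\sum_{m>M}\varepsilon_n(2^m)/4^m$, which demands a uniform-in-$m$ bound on $\varepsilon_n(2^m)$. The cleanest route is a two-step argument: first show, from $(2)$, that $Z$ is already an interpolating sequence, so that the measure $\sum_k(1-|z_k|)\delta_{z_k}$ is Carleson and the bound $\varepsilon_n(A)\lesssim A$ holds with an absolute constant for all $A\ge 1$; the tail then becomes the geometric sum $\sum_{m>M}2^m/4^m=O(2^{-M})$ and can be made smaller than $\eta$ by choosing $M$ large enough, independently of $n$. To show $Z$ is interpolating under $(2)$, I would appeal to Theorem~\ref{CG}: given disjoint subsequences $X,Y\subset Z$, one uses $(2)$ together with the peaking ability of Jones-type functions from Section~\ref{P. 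Beurling} at the scale $|I_{z_n}|$ to separate their closures in $M(H^\infty)$. Combining this Carleson bound with the inner estimate yields $\sigma_n\to 0$, which by the initial reduction completes $(2)\Rightarrow(1)$.
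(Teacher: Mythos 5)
The paper does not prove Theorem~\ref{Cmeasure}; it cites Sundberg--Wolff (Lemma 7.1) and Chalendar--Fricain--Timotin (Proposition 4.2) for it. So there is no in-paper proof to compare against, and the proposal has to stand on its own.

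Your reduction to $\sigma_n\to 0$ and the direction $(1)\Rightarrow(2)$ are correct and clean: the upper bound $|1-\overline{z_n}z_k|\lesssim A(1-|z_n|)$ for $z_k\in S(AI_{z_n})$ immediately turns $\sigma_n\to0$ into $\varepsilon_n(A)\to 0$ for each fixed $A$. The dyadic annular decomposition for $(2)\Rightarrow(1)$ and the per-annulus estimate $\sum_{z_k\in T_m}(\text{summand})\lesssim \varepsilon_n(2^m)/4^m$ are also right, as is the identification of the tail $\sum_{m>M}\varepsilon_n(2^m)/4^m$ as the crux.

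The genuine gap is in the route you propose to the uniform bound $\varepsilon_n(A)\lesssim A$. You want to show from $(2)$ that $Z$ is interpolating by appealing to Theorem~\ref{CG} together with ``the peaking ability of Jones-type functions from Section~\ref{P. Beurling},'' but the Jones functions $g_j$ of Lemma~\ref{estimates} are \emph{constructed for an interpolating sequence}; using them to prove that the sequence is interpolating is circular, and nothing else is offered for how condition $(2)$ separates the closures of disjoint subsequences in $M(H^\infty)$. Moreover, ``interpolating'' is more than you need --- what you actually want is a Carleson-measure bound on $\mu=\sum_k(1-|z_k|)\delta_{z_k}$, and separation alone does not yield that.

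The fix is to get the Carleson bound directly from $(2)$ by a standard dyadic stopping-time argument, with no detour through interpolation. Choose $N_1$ so that $\varepsilon_n(4)\le 1$ for $n\ge N_1$ and set $\mu'=\sum_{k\ge N_1}(1-|z_k|)\delta_{z_k}$. Given any arc $I$, look at its dyadic descendants $J\subseteq I$; call $J$ \emph{stopping} if the top half $T(J)=\{z\in S(J):1-|z|>|J|/2\}$ contains some $z_n$ with $n\ge N_1$, and take the maximal stopping intervals, which are pairwise disjoint. Every $z_k\in S(I)$ with $k\ge N_1$ lies in some $S(J)$ with $J$ stopping (the dyadic $J$ at scale $\approx 1-|z_k|$ works), hence in some maximal $S(J)$. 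For a stopping $J$ with witness $z_n\in T(J)$, one checks $J\subset 4I_{z_n}$ and $1-|z_n|\le|J|$, so $\mu'(S(J))\le (1-|z_n|)+\varepsilon_n(4)\,|I_{z_n}|\le 2|J|$. Summing over the disjoint maximal $J$'s gives $\mu'(S(I))\le 2|I|$, hence $\varepsilon_n(A)\le 2A+O(1)$ uniformly in $n\ge N_1$, which is exactly what your tail estimate $\sum_{m>M}\varepsilon_n(2^m)/4^m\lesssim 2^{-M}$ needs. With this substituted for the $M(H^\infty)$ detour, your argument is complete.
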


Here we note the following result.

\begin{thm}
\label{thm:Carleson} Suppose $Z = \{z_n\}$ is a sequence. For $N > 0$, let  $$\mu_N = \sum_{k \ge N} (1 - |z_k|^2)\delta_{z_k}.$$        
Then the following are equivalent:
\begin{enumerate} 
\item $Z$ is a thin sequence;
\item $ \CC(\mu_N) \to 1$ as $N \to \infty$;
\item $ \RR(\mu_N) \to 1$ as $N \to \infty$.
\end{enumerate}
\end{thm}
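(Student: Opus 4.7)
The plan is to establish the cyclic chain $(1) \Rightarrow (2) \Rightarrow (3) \Rightarrow (1)$. The key operator-theoretic translation is that the Carleson embedding $T_N: H^2 \to \ell^2$ defined by $T_N f = \{\langle f, h_{z_k}\rangle\}_{k \ge N}$ satisfies $\|T_N f\|_{\ell^2}^2 = \|f\|_{L^2(\mu_N)}^2$, since $\langle f, h_{z_k}\rangle_{H^2} = (1-|z_k|^2)^{1/2} f(z_k)$. Hence $\CC(\mu_N) = \|T_N\|^2 = \|T_N^*\|^2$, where $T_N^*\{a_k\} = \sum_{k \ge N} a_k h_{z_k}$. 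Similarly, $(\|k_z\|_{L^2(\mu_N)}/\|k_z\|_2)^2 = \sum_{k \ge N}|\langle h_z, h_{z_k}\rangle|^2$. Evaluating these at $f = h_{z_j}$ or $z = z_j$ for $j \ge N$ shows $\CC(\mu_N), \RR(\mu_N) \ge 1$ always, because the $k = j$ term alone contributes $1$.

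For $(1) \Rightarrow (2)$, I would invoke Volberg's Theorem~\ref{Volberg}: thinness is equivalent to $\{h_{z_n}\}$ being a complete AOB in $K_B$. Since each $z_n$ is a zero of $B$, the $K_B$ reproducing kernel at $z_n$ coincides with the $H^2$ kernel, so the upper AOB constant $C_N \to 1$ controls the $H^2$ norm as well: $\|T_N^* a\|_{H^2}^2 \le C_N\|a\|_{\ell^2}^2$. Combined with the lower bound, this gives $\CC(\mu_N) \to 1$. The implication $(2) \Rightarrow (3)$ is then the easy half of the Carleson embedding theorem: testing the Carleson constant on the reproducing kernels yields $\RR(\mu_N) \le \CC(\mu_N) \to 1$, while the bound $\RR(\mu_N) \ge 1$ closes it.

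The substantive step is $(3) \Rightarrow (1)$, which is where the main obstacle lies. From $\RR(\mu_N) \to 1$ and Carleson's inequality $\CC(\mu_N) \le c\RR(\mu_N)$ one obtains $\mu_N(\mathbb{D}) = \sum_{k \ge N}(1 - |z_k|^2) \le \CC(\mu_N) < \infty$ for $N$ large, so $\{z_k\}$ is Blaschke and in particular $|z_k| \to 1$. Evaluating the kernel ratio at $z = z_j$ for $j \ge N$ and peeling off the $k = j$ term yields
$$
\sum_{k \ge N,\, k \ne j}|\langle h_{z_j}, h_{z_k}\rangle|^2 \;\le\; \RR(\mu_N)^2 - 1.
$$
Given $\varepsilon > 0$, first choose $N$ so the right-hand side is below $\varepsilon/2$. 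For each $k < N$, the crude estimate $|\langle h_{z_j}, h_{z_k}\rangle|^2 \le (1-|z_j|^2)(1+|z_k|)/(1-|z_k|) \to 0$ handles the finitely many remaining terms: for $j$ large, $\sum_{k < N}|\langle h_{z_j}, h_{z_k}\rangle|^2 < \varepsilon/2$. Using the identity $|\langle h_{z_j}, h_{z_k}\rangle|^2 = 1 - \rho^2(z_j, z_k)$ together with the elementary inequality $-\log(1-x) \le 2x$ for $x \in [0, 1/2]$ (valid for every summand once $j$ is large, since each summand is then small), we conclude
$$
-2\log\delta_j \;=\; \sum_{k \ne j} -\log(1 - |\langle h_{z_j}, h_{z_k}\rangle|^2) \;\le\; 2\sum_{k \ne j}|\langle h_{z_j}, h_{z_k}\rangle|^2 \;<\; 2\varepsilon,
$$
so $\delta_j \to 1$ and $\{z_n\}$ is thin. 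The main obstacle here is precisely that the hypothesis only bounds the tail part of the sum defining $\delta_j$; one must separately exploit the Carleson-type finiteness consequence $|z_k| \to 1$ to suppress the initial-segment part of the product.
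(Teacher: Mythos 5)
Your proof is correct and follows essentially the same route as the paper: Volberg's theorem handles $(1)\Rightarrow(2)$, $(2)\Rightarrow(3)$ is trivial, and $(3)\Rightarrow(1)$ hinges on the bound $\sum_{k\ge N,\, k\ne j}\bigl(1-\rho^2(z_j,z_k)\bigr)\le \RR(\mu_N)^2-1$ combined with disposal of the finitely many $k<N$ terms via the Blaschke condition. The only cosmetic differences are that you use $-\log(1-x)\le 2x$ where the paper uses the Weierstrass inequality $\prod(1-x_k)\ge 1-\sum x_k$, and that you handle the head terms $k<N$ (and the derivation of the Blaschke condition directly from $\RR(\mu_N)<\infty$) more explicitly than the paper, which instead routes through separation and a citation to Nikolskii.
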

\begin{proof}
Noting that for each $f \in H^2$
$$
        \| f\|^2_{L^2(\D, \mu_N)} = \sum_{k=N}^\infty (1 - |z_k|^2) |f(z_k)|^2 = \sum_{k=N}^\infty |\langle f, h_{z_k}\rangle|^2,
$$
the implication (1) $\Rightarrow$ (2) follows immediately from Theorem \ref{Volberg}, and the implication (2) $\Rightarrow$ (3) is of course trivial. For (3) $\Rightarrow$ (1),
note first that (3) implies that  there exists $M$ such that for $N \ge M$ the sequence $\{z_n\}_{n \ge N}$ is an interpolating sequence, which we see from the following: For any $k \neq n$, $n,k \ge N$,
\begin{multline*}
  1- \rho(z_n, z_k)^2  = 1- \left|  \frac{z_k - z_n}{1 - \bar z_k z_n}\right|^2 = \frac{(1- |z_n|^2)(1- |z_k|^2)}{ | 1 - \bar z_k z_n|^2} \\ = (1 - |z_k|^2) | h_{z_n}(z_k)|^2 
   <  \|h_{z_n}\|^2_{L^2(\D, \mu_N )} -1  \stackrel{N \to \infty}{\rightarrow} 0,
\end{multline*}
so  $\{z_k\}_{k \ge N}$ is separated for sufficiently large $N$, and together with $ \RR(\mu_N) < \infty$, this implies that the sequence is interpolating from $N$ onwards
(see \cite{nikolski},
page 158) and, in particular, Blaschke.
By the Weierstrass Inequality, we obtain for $n \ge N$ that
\begin{multline*}
   \prod_{k \ge N, k \neq n}  \left|  \frac{z_k - z_n}{1 - \bar z_k z_n}\right|^2 =   \prod_{k \ge N, k \neq n} \left(  1-  \frac{(1 - |z_k|^2)(1 - |z_n|^2)}{|1 - \bar z_k z_n|^2} \right)   \\
       \ge 1 - \sum_{k \ge N, k \neq n}  \frac{(1- |z_n|^2)(1- |z_k|^2)}{ | 1 - \bar z_k z_n|^2}      
       =    1 -  \left(\|h_{z_n}\|^2_{L^2(\D, \mu_N )} -1   \right )    \stackrel{N \to \infty}{\rightarrow} 1,
\end{multline*}
which implies that $|B_n(z_n)| \rightarrow 1    $. Hence $\{z_n\}$ is thin.

\end{proof}

Putting the results above together, we arrive at our main result in this section.

\begin{thm}\label{main} Let $\{z_n\}$ be a Blaschke sequence of distinct points in $\mathbb{D}$. The following are equivalent:
\begin{enumerate} 
\item $\{z_n\}$ is an $EIS_p$ sequence for some $p$ with $1 \le p \le \infty$;
\item $\{z_n\}$ is thin;
\item $\{h_{z_n}\}$ is a complete AOB in $K_B$;
\item $\{z_n\}$ is a strong-$AIS_p$ sequence for some $p$ with $1 \le p \le \infty$;
\item The measure $$\mu_N = \sum_{k \ge N} (1 - |z_k|^2)\delta_{z_k}$$ is a Carleson measure with 
Carleson embedding constant $\CC(\mu_N)$ satisfying $\CC(\mu_N) \to 1$ as $N \to \infty$;
\item The measure $$\nu_N = \sum_{k \ge N}\frac{(1 - |z_k|^2)}{\delta_k} \delta_{z_k}$$ is a Carleson measure with embedding constant $\RR_{\nu_N}$ 
on reproducing kernels satisfying $\RR_{\nu_N} \to 1$.
\end{enumerate}
Moreover, if $\{z_n\}$ is an $EIS_p$ $($strong-$AIS_p$$)$ sequence for some $p$ with $1 \le p \le \infty$, then it is an $EIS_p$ $($strong $AIS_p$$)$ sequence for all $p$.
\end{thm}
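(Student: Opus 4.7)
The equivalences $(2) \Leftrightarrow (3)$ and $(2) \Leftrightarrow (5)$ are furnished by Theorem \ref{Volberg} and Theorem \ref{thm:Carleson}, respectively, and $(1) \Leftrightarrow (4)$ is Theorem \ref{EISiffASI}. So the real work consists of proving $(1) \Rightarrow (2)$ for any fixed $p$, producing $(1)$ for \emph{every} $p$ from $(2)$, and linking $(6)$ into the equivalence. Once these are in place, the \emph{moreover} clause is automatic: being $EIS_p$ or strong-$AIS_p$ for any one $p$ gives thinness, which then gives $EIS_p$ and strong-$AIS_p$ for every $p \in [1,\infty]$.

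The implication $(1) \Rightarrow (2)$ is a direct adaptation of the argument in Corollary \ref{cor:eip}. Apply the $EIS_p$ property to the Kronecker sequence $\{\delta_{j,n}\}_{j \ge N}$ to produce $f \in H^p$ with $f(z_n) = (1-|z_n|^2)^{-1/p}$, $f(z_k) = 0$ for $k \ge N$, $k \ne n$, and $\|f\|_p \le 1+\varepsilon$. Writing $f = B_{n,N}\, g$, where $B_{n,N}$ is the Blaschke product on $\{z_k : k \ge N, k \ne n\}$, one has $g \in H^p$ and $\|g\|_p \le 1+\varepsilon$. The standard pointwise estimate $|g(z_n)| \le \|g\|_p (1-|z_n|^2)^{-1/p}$ then forces $|B_{n,N}(z_n)| \ge (1+\varepsilon)^{-1}$ for all $n \ge N$, and multiplying by the finite product $\prod_{k < N, \, k \ne n} \rho(z_n, z_k) \to 1$ yields $\delta_n \to 1$.

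For $(2) \Rightarrow (1)$, the case $p = \infty$ is Corollary \ref{cor:eip}; for $1 \le p < \infty$ the plan is to use the Jones-type functions $g_n$ of Lemma \ref{estimates}, whose controlling constant $C(\delta_N)$ tends to $1$ as $N \to \infty$ because $\delta_N := \inf_{k \ge N}\delta_k \to 1$ under thinness. Given $a \in \ell^p$, the function
$$f_a(z) := \sum_{n \ge N} a_n (1-|z_n|^2)^{-1/p}\, g_n(z)$$
reproduces $f_a(z_k)(1-|z_k|^2)^{1/p} = a_k$ for $k \ge N$ by the biorthogonality $g_n(z_k) = \delta_{nk}$. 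The main obstacle is the $H^p$ bound $\|f_a\|_p \le (1 + o_N(1))\|a\|_{N,\ell^p}$ with constant tending to $1$; I expect this to come from a Schur-type duality argument against $H^{p'}$ that combines the quadratic pointwise estimate on $g_n$ in Lemma \ref{estimates} with the Carleson embedding constant $\CC(\mu_N) \to 1$ from condition (5). This refines the Shapiro--Shields $H^p$ interpolation proof so that every multiplicative constant can be pushed to $1 + o_N(1)$; tracking these constants is the most delicate step of the argument.

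Finally, for $(2) \Leftrightarrow (6)$: under thinness, $\delta_k \to 1$ uniformly for $k \ge N$, so $\nu_N \le (1+o_N(1))\mu_N$ as measures, and using $\RR^2 \le \CC$ together with (5) we obtain $\RR(\nu_N) \to 1$. Conversely, keeping only the $k = n$ summand in $\|k_{z_n}\|^2_{L^2(\D,\nu_N)} = \sum_{k \ge N} (1-|z_k|^2)/(\delta_k|1-\overline{z_n}z_k|^2)$ already gives
$$\frac{\|k_{z_n}\|^2_{L^2(\D,\nu_N)}}{\|k_{z_n}\|_2^2} \ge \frac{1}{\delta_n}\qquad (n \ge N),$$
so $\RR(\nu_N) \to 1$ forces $\delta_n \to 1$; i.e., thinness. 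This closes the circle of equivalences, and the $p$-independence stated in the \emph{moreover} clause is then a formal consequence.
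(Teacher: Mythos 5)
Your handling of the reference equivalences $(2)\Leftrightarrow(3)$, $(2)\Leftrightarrow(5)$, $(1)\Leftrightarrow(4)$ and of $(2)\Leftrightarrow(6)$ is correct, and your argument for $(1)\Rightarrow(2)$ is in fact simpler and more direct than the paper's. Applying $EIS_p$ to Kronecker sequences, factoring out $B_{n,N}$, and invoking the pointwise bound $|g(z)|\le\|g\|_p(1-|z|^2)^{-1/p}$ immediately gives $|B_{n,N}(z_n)|\ge(1+\varepsilon)^{-1}$ for all $n\ge N$; combining with the finite product $\prod_{k<N,k\ne n}\rho(z_n,z_k)\to1$ as $n\to\infty$ (since the sequence is Blaschke, $|z_n|\to1$) gives $\delta_n\to1$. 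The paper instead argues by contradiction, extracting a thin subsequence with a summability condition and splitting the Blaschke product into pieces; your direct argument avoids all of that.

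The genuine gap is in $(2)\Rightarrow(1)$ for $1\le p<\infty$. You propose to sum the Jones functions $g_n$ of Lemma~\ref{estimates}, claiming their controlling constant $C(\delta_N)$ tends to $1$ as $\delta_N\to1$. That claim is false: the Jones construction gives an absolute constant strictly larger than the optimal Beurling constant $M$ (the paper notes precisely this, just after stating Theorem~\ref{JonesInterp_asymp}), so $C(\delta)\not\to1$ as $\delta\to1$. The Schur-type duality you anticipate is also unnecessary and unlikely to repair the constant. The paper's fix is to use the \emph{sharp} P. Beurling functions $f_n$ from Theorem~\ref{thm:replacement}, which do satisfy $\sum_{n\ge N}|f_n(z)|\le1+\varepsilon$ for $N$ large (this is exactly where commutant lifting earns its keep). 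One then sets
$$
G_N(z)=\sum_{n\ge N}a_n\, f_n(z)\,\bigl(h_{z_n}(z)\bigr)^{2/p},
$$
which interpolates $G_N(z_j)(1-|z_j|^2)^{1/p}=a_j$ for $j\ge N$, and H\"older against the weights $|f_n(z)|$ gives
$$
|G_N(z)|^p\le(1+\varepsilon)^p\sum_{n\ge N}|a_n|^p\,|h_{z_n}(z)|^2,
$$
so that integrating over $\T$ and using $\|h_{z_n}\|_2=1$ yields $\|G_N\|_p\le(1+\varepsilon)\|a\|_{N,\ell^p}$. You should replace the Jones functions with these P. Beurling functions; your overall architecture is otherwise sound, and once $(2)\Rightarrow(1)$ is fixed the \emph{moreover} clause is indeed automatic.
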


In what follows, we let $\tilde{\delta}_N = \min\{\delta_n: n \ge N\}$. Since $\delta_n > \delta > 0$ for all $n$ and $\delta_n \to 1$, we see that $\tilde{\delta}_N \to 1$ as $N \to \infty$.

\begin{proof} {\bf (1) implies (2)}: Suppose that for some $p$ we know that $\{z_n\}$ is an $EIS_p$ sequence. 
%First note that our assumption implies that $\{z_n\}$ is an interpolating sequence for $H^2$ and therefore $\{z_n\}$ is a Blaschke sequence. 
Let $B$ denote the corresponding Blaschke product. Suppose $\{z_n\}$ is not thin.  Then there exists $\varepsilon > 0$ and a subsequence $\{z_{n_k}\}$ such that 
$$\sup_k |B_{n_k}(z_{n_k})| \le 1 - \varepsilon.$$ 

First suppose that $p < \infty$. Choose a subsequence of $\{z_{n_k}\}$ recursively, denoted $\{z_{n_{k_m}}\}$, such that
$\{z_{n_{k_m}}\}$ is thin and satisfies
\begin{eqnarray}\label{sumcondition}
(1 - |z_{n_{k_m}}|^2)^{-1/p} \left(  \sum_{j > m} (1 - |z_{n_{k_j}}|^2)\right)^{1/p} \to 0.\end{eqnarray}

Now by our assumption (1) and the fact that $a_k = (1 - |z_k|^2)^{1/p} \in \ell^p$, we know that there exists $N = N(\varepsilon)$ such that for ${k_m} \ge N$ there exists $f_m \in H^p$ such that for $j \ge N$
 
\begin{multline*}
f_m(z_{n_{k_l}}) (1 - |z_{n_{k_l}}|^2)^{1/p} =(1 - |z_{n_{k_l}}|^2)^{1/p} ~\mbox{for}~ l \ge m ~ \\
\mbox{and}~ f_m(z_j) = 0 ~\mbox{for}~ j \notin \{n_{k_s}\}_{k_s \ge N} ~\mbox{or}~ j = {n_{k_l}}, N \le k_l < k_m,
\end{multline*}

and 

$$\|f_m\|_p \le (1 + \varepsilon)\left(\sum_{n \ge N} |a_n|^p\right)^{1/p} = (1 + \varepsilon)\left(\sum_{l \ge m} (1 - |z_{n_{k_l}}|^2)\right)^{1/p}.$$
Let $b_1$ denote the (thin) Blaschke product with zeros $\{z_{n_{k_l}}\}$ for $n_{k_l} > N$, and $b_2$ denote the Blaschke product with zeros $\{z_j\}_{j \notin \{n_{k_l}\}, j\ge N}$. Now for each such $m$, we know that $f_m$ vanishes on the zeros of $b_2$ so write $f_m = b_2 g_m$ for some $g_m \in H^p$. Since $g_m \in H^p$, we know that
$|g_m(z)| \le (1 - |z|^2)^{-1/p}\|g_m\|_p$, so we obtain

\begin{eqnarray*}
1 & = & |f_m(z_{n_{k_m}})| = |b_2(z_{n_{k_m}})| \, |g_m(z_{n_{k_m}})|\\
& \le & |b_2(z_{n_{k_m}})| (1 - |z_{n_{k_m}}|^2)^{-1/p} \|g_m\|_p\\
& = & |b_2(z_{n_{k_m}})| (1 - |z_{n_{k_m}}|^2)^{-1/p} \|f_m\|_p\\
& \le & |b_2(z_{n_{k_m}})|\frac{1}{(1 - |z_{n_{k_m}}|^2)^{1/p}}\left( (1 + \varepsilon) \left( (1 - |z_{n_{k_m}}|^2) + \sum_{j > m} (1 - |z_{n_{k_j}}|^2)\right)^{1/p}\right).
\end{eqnarray*}

Thus,  
$$1 \le (1 + \varepsilon)\liminf_{m \to \infty}  |b_2(z_{n_{k_m}})|\left( 1 + \left(\frac{1}{1 - |z_{n_{k_m}}|^2}\sum_{j > m} (1 - |z_{n_{k_j}}|^2)\right)\right)^{1/p}.$$

Therefore, there exists $\eta_m \to 0$ such that 
$\liminf_{m \to \infty}|b_2(z_{n_{k_m}})| \ge \lim_{m \to \infty} \frac{1}{(1 + \varepsilon)(1 + \eta_{m})}$ and since we assume that  $\sup_l |B_{n_{k_l}}(z_{n_{k_l}})| \le 1 - \varepsilon$ we have
\begin{eqnarray*}
 1 - \varepsilon & \ge & \liminf_{m \to \infty} \prod_{j \le N} \rho(z_{n_{k_m}}, z_j) \, \prod_{j \in \{z_{n_{k_l}}, l \ne m\}, j > N}
 \rho(z_{n_{k_m}}, z_j) \, \prod_{j \notin \{z_{n_{k_m}}\}, j > N} \rho(z_{n_{k_m}}, z_j)\\
 &  =  & \liminf_{m \to \infty} \prod_{j \le N} \rho(z_{n_{k_m}}, z_j) 
 |{b_1}_{z_{n_{k_m}}}(z_{n_{k_m}})| |b_2(z_{n_{k_m}})| \ge \frac{1}{1 + \varepsilon},\end{eqnarray*} a contradiction.  The case $p = \infty$ follows
 from Corollary \ref{cor:eip}.

 \bigskip
{\bf (2) implies (1).} We show (2) implies (1) for all $p$:  We have shown, in Theorem~\ref{thm:replacement}, that a thin interpolating sequence is an $EIS_\infty$ sequence. We check that thin implies that it is an $EIS_p$ sequence for all $p$ with $1 \le p \le \infty$. To see this, let $\varepsilon > 0$ be given. Since we assume $\{z_j\}_{j \ge N}$ is a thin interpolating sequence, we may choose the P. Beurling functions with constant $(1+ \varepsilon)$
given by Theorem~\ref{thm:replacement}. Then consider 

$$G_N(z) = \sum_{n \ge N} a_n f_n(z) (h_{z_n}(z))^{2/p}.$$ 
We see that $G_N(z_j) = a_j (1 - |z_j|^2)^{-1/p}$ when $j\geq N$ and 

\begin{equation}
\label{Estimate}
|G_N(z)| \le \sum_{n \ge N} (1 + \varepsilon)|a_n| |h_{z_n}(z)|^{2/p} \frac{|f_n(z)|}{1 + \varepsilon}.
\end{equation}
Applying H\"older's inequality and raising both sides to the $p$-th power, we get
$$|G_N(z)|^p \le \left(\sum_{n \ge N} (1 + \varepsilon)^p |a_n|^p |h_{z_n}(z)|^2\right)\left(\sum_{n \ge N} \frac{|f_n(z)|^q}{(1 + \varepsilon)^q}\right)^{p/q}.$$
But $q \ge 1$ and $|f_n(z)| < 1 + \varepsilon$ for all $n$ and $z$, so 
\begin{eqnarray*}
|G_N(z)|^p & \le & (1 + \varepsilon)^p \left(\sum_{n \ge N} |a_n|^p |h_{z_n}(z)|^2\right)\left(\sum_{n \ge N} \frac{|f_n(z)|}{1 + \varepsilon}\right)^{p/q}\\
& \le & (1 + \varepsilon)^p \sum_{n \ge N} |a_n|^p |h_{z_n}(z)|^2.\end{eqnarray*}
Integrating, we get $$\|G_N\|_p^p \le (1 + \varepsilon)^p \|a\|_{N, \ell^p}^p.$$ 
The case of $p=1$ follows from \eqref{Estimate} and the fact that $\sum_{n\geq N}\abs{f_n(z)}<1+\varepsilon$ for all $n$ and $z$ and then integration.  Therefore, we see that (2) implies (1) for all $p$. 

\bigskip
{\bf (2) is equivalent to (3)}: This follows from Theorem~\ref{Volberg}.

\bigskip

{\bf (4) and (1) are equivalent}: This follows from Theorem~\ref{EISiffASI}. \\

At this point, we will note that if $\{z_n\}$ is an $EIS_{p_0}$ sequence for some $p_0$, then it is an $EIS_p$ sequence for all $p$, and the same is true for $AIS_p$ sequences: Suppose $\{z_n\}$ is an $EIS_p$ sequence. Then, from our work above, it is a thin interpolating sequence. Therefore it is $EIS_p$ for all $p$. If $AIS_p$ holds for some $p$, we know $EIS_p$ holds for that $p$ and therefore for all $p$. Consequently, $AIS_p$ holds for all $p$, too. Therefore (1) - (4) are equivalent and if (1) or (4) is true for some $p$, they are both true for all $p$.

\bigskip

The remainder of the theorem follows from Theorem \ref{thm:Carleson} once one observes that $\delta_k$ is bounded below.

\end{proof}

A thin sequence allows repetition of finitely many points. It is clear that we may remove finitely many points from such a sequence to obtain an interpolating sequence. Recall that a sequence is an $AIS$ sequence of type $1$ for $H^\infty$ if whenever $a \in \ell^\infty$, there exists $g \in H^\infty$ such that $|g(z_j) - a_j| \to 0$ and $\|g\|_\infty \le \|a\|_{\ell^\infty}$. Although this notion does not appear to be equivalent to that of a strong $AIS$ sequence for $H^p$, it is a consequence of Theorem~\ref{thm:asiinfty} that an $AIS$ of type $1$ for $H^\infty$ is equivalent to being a thin interpolating sequence when the points $\{z_n\}$ are distinct, for in this case an $AIS$ sequence of type $1$ for $H^\infty$ is equivalent to being a thin sequence and that, by Theorem~\ref{thm:asiinfty}, is equivalent to being a strong $AIS_p$ sequence.  We summarize these remarks below.

\begin{cor} A sequence $\{z_n\}$  of distinct points is an $AIS$ sequence of type $1$ for $H^\infty$ if and only if it is a strong $AIS_\infty$ sequence.
\end{cor}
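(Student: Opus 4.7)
The plan is to chain together the two main equivalences already established in the paper. Theorem~\ref{thm:asiinfty} identifies ``AIS of type $1$ for $H^\infty$'' with ``thin interpolating sequence'' under the distinct-points hypothesis, while Theorem~\ref{main} lists ``thin'' alongside ``strong-$AIS_p$ sequence for some (and hence every) $p\in[1,\infty]$''. Specializing the definition of strong-$AIS_p$ to $p=\infty$ with the convention $(1-|z_j|^2)^{1/\infty}=1$, the condition reads: for every $\varepsilon>0$ there exists $N$ such that for each $a\in\ell^\infty$ there is $G_{N,a}\in H^\infty$ with $\|G_{N,a}\|_\infty\le\|a\|_{N,\ell^\infty}$ and $\sup_{j\ge N}|G_{N,a}(z_j)-a_j|<\varepsilon\|a\|_{N,\ell^\infty}$. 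So strong-$AIS_\infty$ is the natural $\varepsilon/N$-uniform asymptotic $H^\infty$-interpolation notion, and we want to compare it with AIS of type $1$.

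For the forward direction I would take a sequence $\{z_n\}$ of distinct points that is AIS of type $1$ for $H^\infty$, apply Theorem~\ref{thm:asiinfty} to obtain that $\{z_n\}$ is thin (condition $(2)$ of Theorem~\ref{main}), and then invoke the implication $(2)\Rightarrow(4)$ of Theorem~\ref{main} with $p=\infty$ to conclude that $\{z_n\}$ is strong-$AIS_\infty$.

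For the converse, assume $\{z_n\}$ is strong-$AIS_\infty$. This is condition $(4)$ of Theorem~\ref{main} at $p=\infty$, so $(4)\Rightarrow(2)$ of that theorem gives that $\{z_n\}$ is thin. Since the points are distinct, Theorem~\ref{thm:asiinfty} then yields that $\{z_n\}$ is AIS of type $1$ for $H^\infty$, completing the equivalence.

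The point to verify carefully is that the two notions really are logically distinct \emph{a priori}, so that the detour through ``thin'' is genuinely doing work: AIS of type $1$ allows the interpolant $g=g_a$ to depend on $a$ with error $|g(z_j)-a_j|$ only required to tend to $0$ (with no uniform rate), whereas strong-$AIS_\infty$ demands a single $N(\varepsilon)$ uniform over all $a\in\ell^\infty$. This uniformity is the one obstacle to a direct one-line argument, and the only mechanism provided in the paper for manufacturing it from an approximate condition is precisely the thin-sequence characterization. No new estimates are needed beyond citing Theorems~\ref{thm:asiinfty} and~\ref{main}.
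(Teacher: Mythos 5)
Your proof is correct and follows essentially the same route as the paper: both pass through the characterization of thin sequences, using Theorem~\ref{thm:asiinfty} to link ``AIS of type $1$ for $H^\infty$ with distinct points'' to ``thin,'' and Theorem~\ref{main} (the equivalence of (2) and (4), at $p=\infty$) to link ``thin'' to ``strong $AIS_\infty$.'' Your closing remark correctly pinpoints why the detour through thinness is needed---the uniformity in $N(\varepsilon)$ required by strong $AIS_\infty$ is not immediate from the pointwise $|g(z_j)-a_j|\to 0$ condition---which the paper also flags by noting the two notions ``do not appear to be equivalent'' a priori.
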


\section*{Acknowledgement}
We thank the anonymous referee for his or her comments.

%%%%%%%%%%%%%%%%%%%%%%%%%%%%%%%%%
% Bibliography
%%%%%%%%%%%%%%%%%%%%%%%%%%%%%%%%%

%%%%%%%%%%%%
%%%References%%%
%%%%%%%%%%%%

\begin{bibdiv}

\begin{biblist}

\bib{AG}{article}
{
    AUTHOR = {Axler, Sheldon}, 
    Author = {Gorkin, Pamela},
     TITLE = {Divisibility in {D}ouglas algebras},
   JOURNAL = {Michigan Math. J.},
  %FJOURNAL = {Michigan Mathematical Journal},
    VOLUME = {31},
      YEAR = {1984},
    NUMBER = {1},
     PAGES = {89--94},
      ISSN = {0026-2285},
%   MRCLASS = {46J15},
%  MRNUMBER = {736473 (85h:46075)}
}

\bib{A1}{article}
{AUTHOR = {Axler, Sheldon},
     TITLE = {Factorization of {$L^{\infty }$} functions},
   JOURNAL = {Ann. of Math. (2)},
  %FJOURNAL = {Annals of Mathematics. Second Series},
    VOLUME = {106},
      YEAR = {1977},
    NUMBER = {3},
     PAGES = {567--572}
     }

\bib{CG}{article}
{Author = {Carleson, L.},
Author = {Garnett, J.},
Title = {Interpolating sequences and separation properties},
Journal = {J. Analyse Math.},
Volume = {28},
Year={1975 },
Pages = {273--299}
}

\bib{CFT}{article}
{
    AUTHOR = {Chalendar, I.},
    Author = {Fricain, E.},
    Author = {Timotin, D.},
     TITLE = {Functional models and asymptotically orthonormal sequences},
   JOURNAL = {Ann. Inst. Fourier (Grenoble)},
  %FJOURNAL = {Universit\'e de Grenoble. Annales de l'Institut Fourier},
    VOLUME = {53},
      YEAR = {2003},
    NUMBER = {5},
     PAGES = {1527--1549}}
     
     \bib{C}{article}
     {
     AUTHOR = {Chang, Sun Yung A.},
     TITLE = {A characterization of {D}ouglas subalgebras},
   JOURNAL = {Acta Math.},
  %FJOURNAL = {Acta Mathematica},
    VOLUME = {137},
      YEAR = {1976},
    NUMBER = {2},
     PAGES = {82--89}}

\bib{Douglas}{book}
{
AUTHOR = {Douglas, Ronald G.},
     TITLE = {Banach algebra techniques in operator theory},
    SERIES = {Graduate Texts in Mathematics},
    VOLUME = {179},
   EDITION = {Second},
 PUBLISHER = {Springer-Verlag},
   ADDRESS = {New York},
      YEAR = {1998},
     PAGES = {xvi+194}
}

\bib{DN}{article}{
 AUTHOR = {Dyakonov, Konstantin}, Author = {Nicolau, Artur},
     TITLE = {Free interpolation by nonvanishing analytic functions},
   JOURNAL = {Trans. Amer. Math. Soc.},
  %FJOURNAL = {Transactions of the American Mathematical Society},
    VOLUME = {359},
      YEAR = {2007},
    NUMBER = {9},
     PAGES = {4449--4465}
}

%\bib{F}{article}{
%    AUTHOR = {Fricain, Emmanuel},
%     TITLE = {Bases of reproducing kernels in model spaces},
%   JOURNAL = {J. Operator Theory},
%%  FJOURNAL = {Journal of Operator Theory},
%    VOLUME = {46},
%      YEAR = {2001},
%    NUMBER = {3, suppl.},
%     PAGES = {517--543}}
     
 \bib{Gamelin}{book}
{
    AUTHOR = {Gamelin, Theodore W.},
     TITLE = {Uniform algebras},
 PUBLISHER = {Prentice-Hall Inc.},
   ADDRESS = {Englewood Cliffs, N. J.},
      YEAR = {1969},
     PAGES = {xiii+257}
}

\bib{Garnett}{book}
    {author = {Garnett, John B.},
     title = {Bounded analytic functions},
    series= {Pure and Applied Mathematics},
    volume = {96},
publisher = {Academic Press Inc. [Harcourt Brace Jovanovich Publishers]},
   ADDRESS = {New York},
      YEAR = {1981},
     PAGES = {xvi+467},
      ISBN = {0-12-276150-2}}

\bib{GM}{article}
{
    AUTHOR = {Gorkin, Pamela},
    Author={Mortini, Raymond},
     TITLE = {Asymptotic interpolating sequences in uniform algebras},
   JOURNAL = {J. London Math. Soc. (2)},
%  FJOURNAL = {Journal of the London Mathematical Society. Second Series},
    VOLUME = {67},
      YEAR = {2003},
    NUMBER = {2},
     PAGES = {481--498}
     }

\bib{GIS}{article}{
 AUTHOR = {Guillory, Carroll}, Author={Izuchi, Keiji }, Author ={Sarason, Donald},
     TITLE = {Interpolating {B}laschke products and division in {D}ouglas
              algebras},
   JOURNAL = {Proc. Roy. Irish Acad. Sect. A},
%  FJOURNAL = {Proceedings of the Royal Irish Academy. Section A. Mathematical and Physical Sciences},
    VOLUME = {84},
      YEAR = {1984},
    NUMBER = {1},
     PAGES = {1--7},
}

\bib{HKZ}{article}{
    AUTHOR = {Hosokawa, Takuya}, AUTHOR = {Izuchi, Keiji}, AUTHOR = {Zheng, Dechao},
     TITLE = {Isolated points and essential components of composition
              operators on {$H^\infty$}},
   JOURNAL = {Proc. Amer. Math. Soc.},
%  FJOURNAL = {Proceedings of the American Mathematical Society},
    VOLUME = {130},
      YEAR = {2002},
    NUMBER = {6},
     PAGES = {1765--1773 (electronic)}}

\bib{J}{article}{
   author={Jones, Peter W.},
   title={$L^{\infty }$ estimates for the $\bar \partial $ problem in a
   half-plane},
   journal={Acta Math.},
   volume={150},
   date={1983},
   number={1-2},
   pages={137--152}
}

\bib{J1}{article}{
AUTHOR = {Jones, Peter W.},
     TITLE = {Recent advances in the theory of {H}ardy spaces},
 BOOKTITLE = {Proceedings of the {I}nternational {C}ongress of
              {M}athematicians, {V}ol.\ 1, 2 ({W}arsaw, 1983)},
     PAGES = {829--838},
 PUBLISHER = {PWN},
   ADDRESS = {Warsaw},
      YEAR = {1984},
      pages = {829--838}
      }

\bib{M}{article}{
AUTHOR = {Marshall, Donald E.},
     TITLE = {Subalgebras of {$L^{\infty }$} containing {$H^{\infty }$}},
   JOURNAL = {Acta Math.},
  %FJOURNAL = {Acta Mathematica},
    VOLUME = {137},
      YEAR = {1976},
    NUMBER = {2},
     PAGES = {91--98}
     }
     
\bib{NOCS}{article}{
   author={Nicolau, Artur},
   author={Ortega-Cerdˆ, Joaquim}, 
   author={Seip, Kristian},
   title={The constant of interpolation},
   journal={Pacific J. Math.},
   volume={213},
   date={2004},
   number={2},
   pages={389--398}
}

\bib{nikolski}{book}{
   author={Nikol{\cprime}ski{\u\i}, N. K.},
   title={Treatise on the shift operator},
   series={Grundlehren der Mathematischen Wissenschaften [Fundamental
   Principles of Mathematical Sciences]},
   volume={273},
   note={Spectral function theory;
   With an appendix by S. V. Hru\v s\v cev [S. V. Khrushch\"ev] and V. V.
   Peller;
   Translated from the Russian by Jaak Peetre},
   publisher={Springer-Verlag},
   place={Berlin},
   date={1986},
   pages={xii+491}
}

%\bib{MR1052627}{article}{
%   author={McPhail, J. Darrell},
%   title={A weighted interpolation problem for analytic functions},
%   journal={Studia Math.},
%   volume={96},
%   date={1990},
%   number={2},
%   pages={105--116}
%}

\bib{PTW}{article}{
   author={Petermichl, Stefanie},
   author={Treil, Sergei},
   author={Wick, Brett D.},
   title={Carleson potentials and the reproducing kernel thesis for
   embedding theorems},
   journal={Illinois J. Math.},
   volume={51},
   date={2007},
   number={4},
   pages={1249--1263}
}

\bib{SS}{article}{
   author={Shapiro, H. S.},
   author={Shields, A. L.},
   title={On some interpolation problems for analytic functions},
   journal={Amer. J. Math.},
   volume={83},
   date={1961},
   pages={513--532}
}

\bib{Sarason1}{article}{
AUTHOR = {Sarason, Donald},
     TITLE = {Algebras of functions on the unit circle},
   JOURNAL = {Bull. Amer. Math. Soc.},
%  FJOURNAL = {Bulletin of the American Mathematical Society},
    VOLUME = {79},
      YEAR = {1973},
     PAGES = {286--299},
      }
      
%\bib{Sarason}{article}{
%author={Sarason, Donald},
%title={The Shilov and Bishop decomposition of $H^\infty + C$},
%journal={Conference on Harmonic Analysis in Honor of Antoni Zygmund, Volume 2},
%date={1982},
%pages={461--474}}

\bib{SW}{article}{
author={Sundberg, C.},
author={Wolff, T.},
title = {Interpolating sequences for $QA_B$},
journal={Trans. Amer. Math. Soc.},
date={1983},
pages={551--581}}

\bib{V}{article}{
AUTHOR = {Vol{\cprime}berg, A. L.},
     TITLE = {Two remarks concerning the theorem of {S}. {A}xler, {S}.-{Y}.
              {A}. {C}hang and {D}. {S}arason},
   JOURNAL = {J. Operator Theory},
  %FJOURNAL = {Journal of Operator Theory},
    VOLUME = {7},
      YEAR = {1982},
    NUMBER = {2},
     PAGES = {209--218}}

\bib{W}{article}{
   author={Wolff, Thomas H.},
   title={Two algebras of bounded functions},
   journal={Duke Math. J.},
   volume={49},
   date={1982},
   number={2},
   pages={321--328}
}

\bib{WT}{article}
{ author={Wolff, T.}, 
title={Some theorems on vanishing mean oscillation},
journal={Ph.D. Thesis, University of California at Berkeley},
date={1979}}

\end{biblist}
\end{bibdiv}

\end{document}